 \definecolor{dark-red}{rgb}{0.4,0.15,0.15}
 \newcommand{\RR}{\mathcal{R}}
\newcommand{\LL}{\mathcal{L}}
\newcommand*\circled[1]{\tikz[baseline=(char.base)]{
            \node[shape=circle,draw,inner sep=2pt,fill=white] (char) {#1};}}
  \def\centerarc[#1](#2)(#3:#4:#5)
\definecolor{gRed}{HTML}{ff5100}
\definecolor{gBlue}{HTML}{2b83ba}
\newcommand{\Set}{\operatorname{Set}}
\newcommand{\Inj}{\operatorname{Inj}}
\newcommand{\Surj}{\operatorname{Surj}}
\newcommand{\Sub}{\operatorname{Sub}}
\newcommand{\id}{\operatorname{id}}
\newcommand{\ob}{\operatorname{ob}}
\numberwithin{equation}{section} 
\theoremstyle{plain}
\newaliascnt{theorem}{equation}  
\newtheorem{theorem}[theorem]{Theorem}  
 \theoremstyle{definition}
\newaliascnt{prop}{equation}  
\newtheorem{prop}[prop]{Proposition}
\newaliascnt{lemma}{equation}  
\newtheorem{lemma}[lemma]{Lemma}
\newaliascnt{corollary}{equation}  
\newtheorem{corollary}[corollary]{Corollary}
\newaliascnt{claim}{equation}  
\newaliascnt{conjecture}{equation}  
\newaliascnt{question}{equation}  
\newaliascnt{defn}{equation}  
\newtheorem{defn}[defn]{Definition}
\newaliascnt{example}{equation}  
\newtheorem{example}[example]{Example}
\theoremstyle{remark}
\newaliascnt{remark}{equation}  
\newtheorem{remark}[remark]{Remark}
\newaliascnt{convention}{equation}  
\newaliascnt{construction}{equation}  
\newtheorem{construction}[construction]{Construction}
\theoremstyle{plain}
\newtheorem*{mainthm}{Theorem}
\begin{document}
\title{Composition closed premodel structures and the Kreweras lattice}

\author{Scott Balchin}
\address{Max Planck Institute for Mathematics}
\email{balchin@mpim-bonn.mpg.de}

\author{Ethan MacBrough}
\address{Reed College}
\email{emacbrough@reed.edu}

\author{Kyle Ormsby}
\address{Reed College / University of Washington}
\email{ormsbyk@reed.edu \textnormal{/} ormsbyk@uw.edu}

\begin{abstract}
We investigate the rich combinatorial structure of premodel structures on finite lattices whose weak equivalences are closed under composition. We prove that there is a natural refinement of the inclusion order of weak factorization systems so that the intervals detect these composition closed premodel structures. In the case that the lattice in question is a finite total order, this natural order retrieves the Kreweras lattice of noncrossing partitions as a refinement of the Tamari lattice, and model structures can be identified with certain tricolored trees.
\end{abstract}

\maketitle

\vspace{-8mm}

\setcounter{tocdepth}{1}
\tableofcontents

\vspace{-6mm}

\section{Introduction}

A weak factorization system on a category $\mathcal{C}$ consists of a pair of classes of morphisms $(\mathcal{L}, \mathcal{R})$ satisfying lifting and factorization properties, and they make up the backbone of categorical homotopy theory.\footnote{Readers unfamiliar with categories, weak factorization systems, and model structures may skip to the start of \autoref{sec:prelim} for a brief review and references.} Given two such weak factorization systems $(\mathcal{L}, \mathcal{R})$ and $(\mathcal{L}' ,\mathcal{R}')$, there is an obvious ordering given by inclusion of the right class (equivalently by reverse inclusion of the left class). Such an ordered pair was called a \emph{premodel structure} by Barton~\cite{barton}. For a premodel structure there is a natural notion of weak equivalence, namely, we define $\mathcal{W} = \RR \circ \LL'$. If the class of morphisms $\mathcal{W}$ are closed under the two-out-of-three property, then this is exactly the data of a model structure on $\mathcal{C}$ in the sense of Quillen~\cite{quillen}.

If the category $\mathcal{C}$ happens to be finite lattice $L$, then the collection of weak factorization systems on $L$ is itself a finite lattice. As such, the premodel structures can be described as intervals in this lattice, in particular, the collection of premodel structures on a fixed $L$ form a finite set. Among these intervals, one may hope to be able to identify the model structures. It was this viewpoint that was taken by the first and third authors along with Osorno and Roitzheim in~\cite{boor}, where this was done for the finite total order $[n] = \{0<1<\cdots <n\}$. Such investigations were dubbed as \emph{homotopical combinatorics} in loc.~cit.

A key input to this was the identification of the weak factorization systems for $[n]$, which was achieved by Balchin--Barnes--Roitzheim~\cite{bbr} in tandem with work of Franchere--Ormsby--Osorno--Qin--Waugh~\cite{fooqw} under the guise of \emph{transfer systems} via a surprising link to equivariant homotopy theory for the group $C_{p^n}$. Namely, homotopy classes of $N_\infty$ operads for $C_{p^n}$ in the sense of~\cite{blumberghill} are in bijection with weak factorization systems on $\Sub(C_{p^n}) \cong [n]$. From these results, it is know that the lattice of weak factorization systems for $[n]$ is in bijection with the Tamari lattice (of order $n+1$), and as such the premodel structures are in bijection with intervals of the Tamari lattice. Among these intervals, an enumeration was given for the number of model structures.

It was left as an open question in~\cite[\S 6]{boor} to explicitly identify the model structures among the Tamari intervals. In this paper we provide an answer to this question. To do so, we investigate a structure which lives in between the premodel structures and model structures. Instead of asking for the weak equivalences to satisfy the 2-out-of-3 property, we instead ask them to only be closed under composition. It turns out that this collection of \emph{composition closed premodel structures} enjoys many excellent structural properties; in particular, we can identify them as intervals arising from a refined ordering on the collection of weak factorization systems.

\begin{mainthm}[\autoref{sec:ccclosedaslattice}]
Let $L$ be a finite lattice. Then there is a refinement of the ordering on weak factorization systems on $L$ such that the intervals are exactly the composition closed premodel structures. Moreover, the set of weak factorization systems equipped with this ordering is a finite lattice.
\end{mainthm}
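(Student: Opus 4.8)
The plan is to produce the refined order by a purely order-theoretic recipe and then verify that its intervals are precisely the composition closed premodel structures. First I would reformulate the problem on the combinatorial side: a weak factorization system on a finite lattice $L$ is the same datum as a \emph{transfer system}, i.e.\ a sub-relation $R$ of the order relation on $L$ that is reflexive, transitive, and closed under restriction (pullback) along $\leq$; the inclusion order on weak factorization systems corresponds to inclusion of these relations. Given a premodel structure, i.e.\ a pair $R \subseteq R'$ of transfer systems, the weak equivalences $\mathcal{W} = \RR \circ \LL'$ translate into an explicit relation on $L$ built from $R$ and the ``reverse'' complement of $R'$; I would first compute this relation concretely and extract a clean combinatorial criterion for when $\mathcal{W}$ is closed under composition. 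This criterion will be a condition relating $R$ and $R'$ that is \emph{not} symmetric in the two transfer systems — it constrains how far $R'$ may exceed $R$ given the ``shape'' of $R$ — and it is exactly this asymmetry that forces a genuine refinement rather than a sub-order of the inclusion order.

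Next I would define, for each transfer system $R$, the set $U(R)$ of transfer systems $R'$ with $R \subseteq R'$ such that $(R,R')$ is composition closed; the combinatorial criterion should show $U(R)$ is an up-set in the inclusion order above $R$, in fact an interval $[R, \tau(R)]$ for a canonical maximal element $\tau(R)$ obtained by adjoining to $R$ all pairs permitted by the criterion and then closing up to a transfer system. I would then declare $R \preceq S$ to hold when $R \subseteq S$ \emph{and} $S \in U(R)$ (equivalently $S \leq \tau(R)$ in the inclusion order) — or, more precisely, take $\preceq$ to be the transitive closure of this relation if composition-closedness does not chain automatically. The intervals of $(\WFS(L), \preceq)$ are then by construction the pairs $R \preceq S$, i.e.\ the composition closed premodel structures, giving the first assertion; the bulk of the work is checking that this $\preceq$ is antisymmetric and that no spurious comparabilities are introduced by transitivizing — here I would lean on the concrete description of $\tau$ and argue that $R \preceq S \preceq R$ forces $R = S$ because each step only adds relations.

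For the lattice claim I would exhibit meets and joins directly. Joins: given a family $\{R_i\}$ that is pairwise (hence, I would show, jointly) $\preceq$-bounded above, their inclusion-join $\bigvee R_i$ (generated as a transfer system by $\bigcup R_i$) should again be $\preceq$-above each $R_i$, using that the composition-closedness criterion is preserved under the relevant unions; the general join is then the $\preceq$-join. Meets are easier: the inclusion-meet $\bigcap R_i$ is a transfer system, it lies $\preceq$-below each $R_i$ because shrinking $R$ only relaxes the criterion (every $R_i$ still satisfies the criterion relative to the smaller $\bigcap R_j$), and it is the largest such, so it is the $\preceq$-meet. Since $\WFS(L)$ is finite, existence of all finite meets and a top element (the total relation, which is $\preceq$-above everything since with $R = R'$ there are no weak equivalences to compose) yields a complete lattice. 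The main obstacle I anticipate is the join step: verifying that pairwise $\preceq$-compatibility of a family propagates to the inclusion-join, which amounts to showing the asymmetric composition-closedness criterion behaves well under joining the $R_i$ and under joining their common upper bounds — this is where a careful analysis of $\tau$ and possibly an explicit formula for it on $L$ will be needed, and it is plausible that one must first prove $\tau$ is itself order-preserving or satisfies a Galois-type adjunction with the inclusion order.
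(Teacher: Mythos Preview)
Your proposal has the right setup but several of the structural claims you plan to lean on are false, and the one lemma that makes the whole thing work is missing. First, the set $U(R)=\{R': R\subseteq R'\text{ and }(R,R')\text{ is composition closed}\}$ is \emph{not} an interval $[R,\tau(R)]$ in the inclusion order: already on $L=[2]$, taking $R=\{0{\to}1\}$ one computes that $U(R)$ consists of $R$ and the maximal transfer system only, whereas the inclusion interval from $R$ to the top also contains $\{0{\to}1,\,0{\to}2\}$, for which the pair is not composition closed. Second, and more seriously, your fallback of ``taking the transitive closure if composition-closedness does not chain'' cannot succeed: if the relation were not already transitive, transitivizing would create comparabilities $R\preccurlyeq R''$ with $(R,R'')$ \emph{not} composition closed, directly contradicting the statement being proved. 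The indispensable lemma---which you do not propose a method for---is that the relation \emph{is} transitive: if $\RR\circ\LL'$ and $\RR'\circ\LL''$ are composition closed then so is $\RR\circ\LL''$. The paper establishes this by a short lifting argument using the $(\LL',\RR')$-factorization together with the inclusions $\LL''\subseteq\LL'$ and $\RR\subseteq\RR'$; once this is in hand, no transitive closure is needed and the intervals are automatically exactly the composition closed pairs.

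Your lattice argument fails on both sides. The inclusion-meet is not the $\preccurlyeq$-meet: on $[2]$ again, with $R_1=\{0{\to}1\}$ and $R_2=\{0{\to}1,\,0{\to}2\}$ one has $R_1\cap R_2=R_1$, but $R_1\not\preccurlyeq R_2$, so $R_1$ is not even a $\preccurlyeq$-lower bound for the pair; your heuristic ``shrinking $R$ only relaxes the criterion'' is simply false, since $(R_2,R_2)$ is trivially composition closed while $(R_1,R_2)$ is not. The inclusion-join likewise need not be the $\preccurlyeq$-join (same $R_1,R_2$: inclusion-join is $R_2$, $\preccurlyeq$-join is the maximal transfer system). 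The paper's route is essentially dual to what you attempt: it proves a splitting characterization of $\RR\preccurlyeq\RR'$ phrased entirely in terms of $\RR$ and $\RR'$ (no reference to $\LL'$), from which it follows that each $U(R)$ is closed under \emph{intersection}. Hence the $\preccurlyeq$-join of any family is the intersection of all common $\preccurlyeq$-upper bounds, and together with the trivial transfer system as minimum this gives the lattice structure.
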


This result suggests that the collection of composition closed premodel structures may provide a sensible middle ground between premodel structures and model structures. We highlight that even though there is a further refinement of the ordering on weak factorization systems such that the intervals are the model structures, the resulting poset is, not even in the simplest non-trivial cases, a lattice (c.f., \autoref{sec:refinedformodel}). As such, the collection of model structures on a fixed lattice is poorly behaved.

In \autoref{sec:caseoffinitetotal} we explore the implications of the above theorem for finite total orders. Recall that we know that the ordering on weak factorization systems retrieves the Tamari lattice. By a very explicit bijection we can consider weak factorization systems on $[n]$ equivalently as noncrossing partitions on $[n]$. The collection of noncrossing partitions under refinement of partitions gives us the \emph{Kreweras lattice}, a well known, and ubiquitous, refinement of the Tamari lattice. We are now in a position to state the main theorem of \autoref{sec:caseoffinitetotal}.

\begin{mainthm}[\autoref{thm:main}]
Let $n \geqslant 0$. Then the composition closed premodel structures on the lattice $[n]$ are in bijection with intervals of the Kreweras lattice.
\end{mainthm}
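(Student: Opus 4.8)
The plan is to derive the statement from the first main theorem together with an explicit identification of finite lattices. Applying the first main theorem (\autoref{sec:ccclosedaslattice}) to $L = [n]$, the composition closed premodel structures on $[n]$ are exactly the intervals of the finite lattice obtained by equipping $\WFS([n])$ with the composition closed refinement of the inclusion order; write $\preceq$ for this refined order. Recall that we have already identified $\WFS([n])$ with the set of noncrossing partitions of $\{0,1,\dots,n\}$ via an explicit bijection $\Phi$, and that the Kreweras lattice is this set under the refinement order. It therefore suffices to prove that $\Phi$ is an isomorphism of posets from $(\WFS([n]),\preceq)$ onto the Kreweras lattice; then $\preceq$-intervals correspond to Kreweras intervals and we are done. (The case $n=0$ is trivial, so one may assume $n\geqslant 1$.)

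The crux is the claim that, for weak factorization systems $W_1,W_2$ on $[n]$, one has $W_1\preceq W_2$ if and only if $\Phi(W_1)$ refines $\Phi(W_2)$. I would prove this using the transfer system description of weak factorization systems on $[n]$: write $\RR_i$ for the right class of $W_i$ (a transfer system) and $\LL_i$ for its left class, and recall that a morphism $a\leqslant b$ lies in the weak equivalences $\mathcal W=\RR_1\circ\LL_2$ precisely when there is some $c$ with $a\leqslant c\leqslant b$, $a\to c$ in $\LL_2$, and $c\to b$ in $\RR_1$. For the ``if'' direction one first observes, directly from the explicit form of $\Phi$, that $\Phi(W_1)$ refining $\Phi(W_2)$ already forces $\RR_1\subseteq\RR_2$, so that $(W_1,W_2)$ is a genuine premodel structure; one then uses the block structure of the two partitions, together with an explicit description of $\LL_2$ in terms of $\RR_2$, to verify that $\mathcal W$ is closed under composition. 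For the ``only if'' direction one argues contrapositively: if $\Phi(W_1)$ does not refine $\Phi(W_2)$, there are elements lying in a common block of $\Phi(W_1)$ but in distinct blocks of $\Phi(W_2)$, and from this data one manufactures a composable pair $a\to b$, $b\to c$ of weak equivalences whose composite $a\to c$ is not a weak equivalence, so $(W_1,W_2)$ fails to be composition closed.

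Granting the claim, $\Phi$ and $\Phi^{-1}$ are both order preserving, hence $\Phi$ is a poset isomorphism onto the Kreweras lattice, and intervals match as required. I expect the genuine difficulty to lie in the combinatorics underpinning the claim: translating the left class $\LL_2$ and the composite $\RR_1\circ\LL_2$ into the language of the noncrossing partitions $\Phi(W_1),\Phi(W_2)$, and verifying that a failure of composition closure corresponds exactly to $\Phi(W_1)$ failing to refine $\Phi(W_2)$ --- intuitively, to a crossing or unnesting obstruction between the two partitions. One must also keep careful track of order conventions (inclusion versus reverse inclusion of the two classes, refinement versus coarsening of partitions) so that $\preceq$ and the refinement order point the same way; these should be pinned down at the outset to be compatible with the Tamari identification recalled above. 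A workable alternative organization is to match covering relations instead: show that the covers of $(\WFS([n]),\preceq)$ are carried by $\Phi$ onto the covers of the Kreweras lattice --- namely the merges of two blocks of a noncrossing partition that preserve noncrossingness --- and then invoke the fact that a bijection between finite posets matching covers in both directions is an isomorphism.
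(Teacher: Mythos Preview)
Your reduction is the paper's: both observe that it suffices to show the composition-closed order $\preccurlyeq$ on $\mathsf{Tr}([n])$ coincides with the Kreweras order under the noncrossing-partition bijection, after which intervals correspond automatically. The difference lies entirely in how that identification is established.

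The paper does not argue directly with closure of $\RR\circ\LL'$ under composition as you propose. Instead it passes through two intermediaries: the function $\pi_\RR(i)=\max\{j:i\,\RR\,j\}$ (whose fibres give the noncrossing partition) and the splitting characterization of $\preccurlyeq$ proved in \autoref{prop:onfinite}. The key reformulation is that $\RR\leqslant_K\RR'$ is equivalent to $\pi_{\RR'}(\pi_\RR(i))=\pi_{\RR'}(i)$ for all $i$. For $\preccurlyeq\Rightarrow\leqslant_K$, one applies the splitting criterion to the square with corners $i,i,\pi_\RR(i),\pi_{\RR'}(i)$ and reads off the identity; for $\leqslant_K\Rightarrow\preccurlyeq$, given an arbitrary square one produces the splitting by a three-way case split on the relative positions of the entries (using total order of $[n]$), with the only nontrivial case dispatched by $\pi_{\RR'}(\pi_\RR(i))=\pi_{\RR'}(i)$. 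Each direction is a few lines.

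Your direct approach---verifying composition closure from the block structure, and in the contrapositive manufacturing an explicit $a\to b\to c$ witnessing failure---is viable, but the step you flag as ``the genuine difficulty'' is precisely where the paper's $\pi_\RR$ reformulation and the splitting lemma pay off: they replace a delicate analysis of which $a\to b$ lie in $\LL'$ by a single equation on idempotent functions. Your covering-relations alternative would also go through but is no lighter. One organizational point: in your contrapositive you should first dispose of the case $\RR_1\not\leqslant\RR_2$ (where $\preccurlyeq$ fails trivially) before hunting for a composition failure; as written you implicitly assume a premodel structure is already present.
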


\begin{remark}
An anonymous referee points out the affinity between this theorem and the work of Ch\^atel--Pons \cite{chatelpons} and Rognerud \cite{rognerud}. Indeed, Rognerud utilizes the \emph{interval posets} of Ch\^atel--Pons to introduce \emph{exceptional intervals} of the Tamari lattice. These are in bijection with Kreweras intervals and thus, by \autoref{thm:main}, in bijection with composition closed premodel structures on finite total orders. Deeper analysis of the link between interval posets and abstract homotopy theory should prove fruitful.
\end{remark}

Equipped with an explicit description of the composition closed premodel structures for $[n]$, in \autoref{sec:stacks} we return to the question raised in~\cite{boor} regarding identifying the model structures among the Tamari intervals. Building on work of Bernardi--Bonichon~\cite{intervals}, we are able to construct a bijection between tricolored trees and composition closed premodel structures on $[n]$. We then provide an easy-to-check criterion for if a given composition closed premodel structure is a model structure.

\begin{mainthm}[\autoref{mainthm}]
There is an explicit bijection between certain tricolored trees and Kreweras pairs under which a tricolored tree represents a model structure if and only if it does not have a red branch descended from any non-red branch.
\end{mainthm}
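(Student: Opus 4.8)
The plan is to assemble the tricolored-tree correspondence from the pieces already in place, and then to push the two-out-of-three axiom across it.

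\emph{Step 1: the bijection.} By \autoref{thm:main}, composition closed premodel structures on $[n]$ are the intervals $(\LL,\RR)\le(\LL',\RR')$ of the Kreweras lattice; via the Bernardi--Bonichon correspondence, together with the identification with stacked triangulations developed in \autoref{sec:stacks}, each such interval is encoded by a stacked triangulation of the relevant polygon. I would then pass to the dual tree of the triangulation and color its branches according to the class of the morphism of $[n]$ each represents: every non-identity morphism lies in exactly one of $\RR$, $\LL'$, or $\LL\cap\RR'$, and the last --- the cofibration-fibrations --- are declared \emph{red}. The work in this step is combinatorial bookkeeping: one verifies that flips and stackings act compatibly with the coloring, so that the resulting decorated trees are exactly the ``certain tricolored trees'' in the statement, and that the assignment is a bijection.

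\emph{Step 2: model structures, combinatorially.} Since $\mathcal{W}=\RR\circ\LL'$ is already closed under composition, the only two-out-of-three conditions that can fail are the ``restriction'' ones: for $a\le b\le c$ in $[n]$, if $[a,c]\in\mathcal{W}$ and one of $[a,b],[b,c]$ lies in $\mathcal{W}$, then so does the other. Using the explicit description of $\mathcal{W}$ in terms of the two transfer systems, I would first record the key point that a non-identity morphism fails to lie in $\mathcal{W}$ exactly when it is a cofibration-fibration, i.e.\ when it is red. This reduces being a model structure to the absence of a minimal obstructing triple: a red interval which arises as a sub-interval of a weak equivalence whose complementary sub-interval is again a weak equivalence.

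\emph{Step 3: transport to the tree.} Under the dual-tree bijection, containment of intervals of $[n]$ corresponds to the ancestor relation among branches, so an obstructing triple is precisely a red branch possessing a non-red ancestor --- a red branch descended from a non-red branch. For the converse I would take any such pair of branches, translate it back into a nested pair of intervals with the outer one a weak equivalence, and exhibit the corresponding failure of restriction two-out-of-three; the two cases (outer interval of trivial-fibration or of trivial-cofibration type) are handled separately but identically in spirit. Combining Steps 1--3 gives the stated criterion.

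\emph{Main obstacle.} The crux is Step 3, or more precisely the dictionary underlying it: matching the order-theoretic statement about concatenating intervals in $[n]$ (hence membership in $\RR\circ\LL'$) with the purely combinatorial ``red-below-non-red'' statement about the decorated dual tree. This demands a careful understanding of how the stacking operation interacts with the three morphism classes, and in particular of why descent in the tree is the correct avatar of interval nesting --- including why it suffices to test ancestor branches rather than all containing intervals. The remaining ingredients --- the chain of bijections and the reduction of ``model structure'' to the restriction form of two-out-of-three for $\mathcal{W}$ --- are essentially formal given the earlier sections.
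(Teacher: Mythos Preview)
Your Step~2 rests on a false claim: it is not true that a non-identity morphism of $[n]$ lies outside $\mathcal{W}=\RR\circ\LL'$ exactly when it lies in $\LL\cap\RR'$. One direction does hold (a non-identity $f\in\LL\cap\RR'$ admits no $(\LL',\RR)$-factorization, by a lifting argument), but the converse already fails on $[3]$. Take $\RR$ trivial and $\RR'$ the transfer system generated by $0\to1$ and $2\to3$; this is a Kreweras pair. The morphism $0\to3$ is not in $\mathcal{W}$ (the only $b$ with $b\,\RR\,3$ is $b=3$, yet $0\,\cancel{\LL'}\,3$ because the square against $2\to3$ has no lift), while $0\to3\notin\RR'$, so $0\to3\notin\LL\cap\RR'$. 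More generally the three classes $\RR$, $\LL'$, $\LL\cap\RR'$ do not partition the non-identity morphisms of $[n]$; a morphism can lie in none of them.

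The difficulty originates in Step~1. The tricoloring used in the theorem is \emph{not} obtained by asking which of $\RR,\LL',\LL\cap\RR'$ a branch-morphism belongs to; it is inherited geometrically from the stacked triangulation, and its relationship to the premodel structure is mediated by the functions $\pi_\RR,\pi_{\RR'}$ and their duals $\theta_\RR,\theta_{\RR'}$. A red edge from $x$ to its child $y$ records that $x\,\RR'\,y$ but $x\,\cancel{\RR}\,y$ together with a minimality condition --- so the associated morphism sits in $\RR'\setminus\RR$, which is strictly larger than $\LL\cap\RR'$. Because of this, your Step~3 dictionary (interval containment in $[n]$ $\leftrightarrow$ ancestor relation among branches; obstructing triple $\leftrightarrow$ red-below-non-red) cannot be read off as stated: an obstructing triple $a<b<c$ involves three intervals, at most one of which is a tree branch, and the passage from ``$[a,c]$ contains a non-$\mathcal{W}$ subinterval'' to a statement about ancestors of a single red edge is precisely the content requiring proof. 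The paper supplies this by treating ``red above blue'' and ``red above green'' separately, converting each into a numerical condition on $\pi$ or $\theta$ (for instance $\pi_\RR(x)<\pi_{\RR'}(x)<n$), and then matching those conditions to the two restriction forms of two-out-of-three via \autoref{prop1} and \autoref{prop2}. That analysis is the substance of the argument, and it does not reduce to the morphism-class picture you outline.
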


This theorem is intriguing for several reasons. Firstly, the Kreweras lattice appears in many disparate areas of mathematics. Perhaps most interestingly for this paper is the fact that the thick subcategories of the Abelian category $\mathrm{Rep}(A_n)$ of all finite dimensional $k$-linear
representations of the quiver $A_n$ for a field $k$ under inclusion is the Kreweras lattice~\cite{ingallsthomas, krause_string}. The investigations here suggest that there may be nontrivial substance to understanding an explicit interpretation of the lattice intervals in this setting, especially those that are in bijection to the model structures.

Finally, returning to the setting of equivariant homotopy theory and $N_\infty$ operads, Blumberg--Hill~\cite{MR4327103} introduce the notion of \emph{compatible pairs} of $N_\infty$ operads, which encode \emph{bi-incomplete Tambara functors}. One can prove that the number of compatible pairs of $N_\infty$ operads for $C_{p^n}$ is exactly the number of Kreweras intervals. In particular, this suggests the existence of a (non-trivial) bijection between compatible pairs of $N_\infty$ operads and composition closed premodel structures on the subgroup lattice of $C_{p^n}$. We discuss this further in \autoref{rem:compatible}.

\subsection*{Acknowledgements}
The authors thank the anonymous referees for helpful comments and suggestions. The first author would like to thank the Max Planck Institute for Mathematics for its hospitality, and was partially supported by the European Research Council (ERC) under Horizon Europe (grant No. 101042990). The second author thanks Coil Technologies for their generous donation to fund his tuition, which enabled him to conduct this research. The third author's work was supported by the National Science Foundation under Grant No.~DMS-2204365.

\vspace{-2mm}

\section{Preliminaries}\label{sec:prelim}

We begin by recalling the necessary preliminary results regarding categories, weak factorization systems, and (pre)model structures. We refer the reader to~\cite[\S 2]{boor} for further details.

First recall that a category $\mathcal{C}$ consists of a class of \emph{objects} $\ob \mathcal C$ and for each pair $X,Y\in \ob \mathcal C$ a class of \emph{morphisms} $\mathcal C(X,Y)$ with \emph{source} $X$ and \emph{target} $Y$. When $f\in \mathcal C(X,Y)$, we say that $f\colon X\to Y$ is a morphism in $\mathcal C$. Furthermore, for all $X,Y,Z\in \ob C$, $\mathcal C$ is equipped with a \emph{composition} $\circ\colon \mathcal C(Y,Z)\times \mathcal C(X,Y)\to \mathcal C(X,Z)$ taking a pair $(g,f)$ of composable morphisms (target of $f$ equals source of $g$) to $g\circ f$. This composition must be associative, and for each $X\in \ob \mathcal C$ there is a distinguished identity morphism $\id_X\in \mathcal C(X,X)$ which is a two-sided identity for composition. We refer the reader to \cite{riehl2017category} for an accessible, contemporary introduction to category theory.

\begin{example}
The prototypical category is $\Set$, with $\ob \Set$ the proper class of sets and $\Set(X,Y)$ the set of functions with domain $X$ and codomain $Y$. Composition is usual composition of functions, and identity morphisms are the usual identity functions $\id_X\colon x\mapsto x$.
\end{example}

\begin{example}
Every poset (or, more generally, preorder) $(P,\leqslant)$ induces a category --- still called $P$ --- with objects the elements of $P$ and $P(x,y)$ a singleton set for $x\leqslant y$ and otherwise empty. Then $\id_x$ is the unique element of $P(x,x)$, and composition is uniquely determined by transitivity of $\leqslant$.
\end{example}

Model structures on categories are a convenient formalism for abstract homotopy theory. They are built from compatible pairs of weak factorization systems, which are defined in terms of lifting properties. We now introduce these concepts and provide relevant examples.

\begin{defn}
For any two morphisms $i \colon A \to B$ and $p \colon X \to Y$ in a category $\mathcal{C}$, we say that $i$ \emph{has the left lifting property (LLP) with respect to $p$}, or $p$ \emph{has the right lifting (RLP) property with respect to $i$},
if for all commutative squares of the form
\[
\begin{gathered}\xymatrix{
A \ar[r] \ar[d]_{i} & X \ar[d]^{p} \\ B \ar[r] & Y
}\end{gathered}    
\]
there exists a lift $h \colon B \to X$ which makes the resulting diagram commute. If $i$ lifts on the left of $p$ we write $i \boxslash p$. For any class $\mathcal{S}$ of morphisms in $\mathcal{C}$ we write
\begin{align*}
\mathcal{S}^{\boxslash}& = \{g \in \operatorname{Mor}(\mathcal{C}) \mid f \boxslash g \text{ for all } f \in \mathcal{S} \},\\
{}^{\boxslash} \mathcal{S} &= \{f \in \operatorname{Mor}(\mathcal{C}) \mid f \boxslash g \text{ for all } g \in \mathcal{S} \}.
\end{align*}
Note that $\mathcal{S} \subseteq {}^\boxslash \mathcal{T}$ if and only if $\mathcal{T} \subseteq \mathcal{S}^\boxslash$. We write $\mathcal{S}\boxslash \mathcal{T}$ when this holds.
\end{defn}

\begin{defn}
Let $\mathcal{L}$ and $\mathcal{R}$ be classes of morphisms in a category $\mathcal{C}$. Then the pair $(\mathcal{L},\mathcal{R})$ is a \emph{weak factorization system} if:
\begin{enumerate}
\item every morphism $f$ in $\mathcal{C}$ can be factored as $f=pi$ with $i \in \mathcal{L}$ and $p \in \mathcal{R}$,
\item $\mathcal{L} \boxslash \mathcal{R}$,
\item $\mathcal{L}$ and $\mathcal{R}$ are closed under retracts.
\end{enumerate}
\end{defn}

\begin{example}
Let $\Inj$ be the class of injective functions and $\Surj$ be the class of surjective functions. Then both pairs $(\Surj,\Inj)$ and $(\Inj,\Surj)$ form weak factorization systems on $\Set$.
\end{example}

\begin{defn}\label{def:premodel}
A \emph{premodel category} is a category $\mathcal{C}$ with all finite limits and colimits equipped with two weak factorization systems $(\LL,\RR)$ and $(\LL',\RR')$ such that $\RR \subseteq \RR'$ (equivalently $\LL \supseteq \LL'$). The \emph{weak equivalences} of a premodel structure is the class of maps $\mathcal{W} := \RR \circ \LL'$. For a fixed category $\mathcal{C}$ we denote by $\mathsf{P}(\mathcal{C})$ the collection of premodel structures of $\mathcal{C}$.
\end{defn}

Premodel structures were introduced in~\cite{barton} as a generalization of model structures (in the sense of Quillen) and are amenable to the theory of homotopy limits and colimits. The model structures appear as the premodel structures such that the weak equivalences $\mathcal{W}$ are closed under the 2-out-of-3 property. That is, if two of $f,g, f \circ g$ are weak equivalences, then so is the third. We shall write $\mathsf{Q}(\mathcal{C})$ for the collection of model structures on $\mathcal{C}$.

\begin{example}
In \cite{tobyomar}, Antol\'in Camarena and Barthel demonstrate that there are exactly nine model structures on $\Set$.
\end{example}

There is an inclusion $\mathsf{P}(\mathcal{C}) \subseteq \mathsf{Q}(\mathcal{C})$, which --- even for the most simple of $\mathcal{C}$ --- is a strict inclusion. In this paper, we wish to study a natural intermediate collection between these two which arises as a weakening of the two-out-of-three property.

\begin{defn}
Let $\mathcal{C}$ be a category equipped with a premodel structure. We say that the premodel structure is a \emph{composition closed premodel structure} if $\mathcal{W}$ is closed under composition. We write $\mathsf{C}(\mathcal{C})$ for the collection of composition closed premodel structures on $\mathcal{C}$.
\end{defn}

\begin{lemma}
Let $\mathcal{C}$ be a category. Then there are inclusions
\[
\mathsf{P}(\mathcal{C}) \subseteq \mathsf{C}(\mathcal{C}) \subseteq \mathsf{Q}(\mathcal{C}).
\]
\end{lemma}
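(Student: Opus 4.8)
The lemma compares three nested collections of structures on $\mathcal{C}$ --- the premodel structures, the composition closed ones among them, and the model structures --- and the plan is to prove it by directly unwinding the definitions recalled above; no further machinery is required. The statement splits into two inclusions, which I would treat separately.

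For the inclusion between composition closed premodel structures and premodel structures, I would observe that it is built into the definitions: a composition closed premodel structure is, by definition, a premodel structure $((\mathcal{L},\mathcal{R}),(\mathcal{L}',\mathcal{R}'))$ carrying the extra requirement that the class $\mathcal{W} = \mathcal{R}\circ\mathcal{L}'$ be closed under composition. Forgetting this requirement exhibits the underlying data as an element of $\mathsf{P}(\mathcal{C})$, so there is nothing further to verify here.

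For the inclusion between model structures and composition closed premodel structures, the single substantive (yet still elementary) point is that the $2$-out-of-$3$ property formally implies closure under composition. I would argue as follows: a model structure is a premodel structure whose class $\mathcal{W}$ of weak equivalences satisfies $2$-out-of-$3$; given composable morphisms $f$ and $g$, so that $f\circ g$ is defined, with $f,g \in \mathcal{W}$, two of the three morphisms $f$, $g$, $f\circ g$ --- namely $f$ and $g$ --- lie in $\mathcal{W}$, so the $2$-out-of-$3$ property forces the remaining one, $f\circ g$, to lie in $\mathcal{W}$ as well. Hence $\mathcal{W}$ is closed under composition, and the model structure is in particular a composition closed premodel structure. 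Chaining the two inclusions yields the lemma.

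I do not anticipate any genuine obstacle, since the whole argument is a matter of parsing definitions; the one line worth spelling out in full --- so that the reader sees that no subtlety is being concealed --- is the implication from $2$-out-of-$3$ to closure under composition in the second step. One might additionally remark that both inclusions are strict in general, indeed already on small finite lattices, which is precisely what makes $\mathsf{C}(\mathcal{C})$ worth isolating as a proper intermediate collection; exhibiting explicit witnesses of strictness would be a natural addendum but is not needed for the lemma itself.
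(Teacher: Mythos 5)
Your argument is mathematically correct, but note that it establishes $\mathsf{Q}(\mathcal{C}) \subseteq \mathsf{C}(\mathcal{C}) \subseteq \mathsf{P}(\mathcal{C})$, which is the reverse of the chain printed in the lemma. This is not a defect in your reasoning but a typo in the paper's statement: since every model structure is in particular a composition closed premodel structure (two-out-of-three trivially implies closure under composition, exactly as you spell out), and every composition closed premodel structure is by definition a premodel structure with an extra property, the inclusions can only run in the direction you proved. That direction is also the only one compatible with the paper's earlier remark that ``there is an inclusion $\mathsf{P}(\mathcal{C}) \subseteq \mathsf{Q}(\mathcal{C})$'' being a strict inclusion (also reversed), and with the enumeration asymptotics later in the paper, where $|\mathsf{Q}([n])| < |\mathsf{C}([n])| < |\mathsf{P}([n])|$. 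The paper supplies no proof of this lemma; your one-paragraph unwinding of the definitions, with the two-out-of-three implication made explicit, is exactly what is expected and nothing further is required --- it would be worth flagging the reversed inclusions to the authors, but as a proof of the intended statement your proposal is complete.
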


Now that we have introduced the structures of interest in the general setting, let us restrict to the case where the category $\mathcal{C}$ is a finite lattice. In this case we have a concrete understanding of the collection of weak factorization systems via the language of \emph{transfer systems} as we now recall.

\begin{defn}\label{def:transfersystem}
Let $L = (\mathcal{P},\leqslant)$ be a lattice. A \emph{transfer system} on $L$ consists of a partial order $\mathcal{R}$ that refines $\leqslant$ and is closed under pullbacks: for all $x,y,z \in L$, if $x \,\mathcal{R}\, y$ and $z \leqslant y$, then $(x {\wedge} z) \, \mathcal{R} \, z$.
\end{defn}

In the following result, we consider poset structures on the collections of transfer systems and weak factorization systems. The order on transfer systems is by refinement. That is, we say $\mathcal{R} \leqslant \mathcal{R}'$ if and only if  for all $x,y \in L$, if $x \,\mathcal{R}\, y$ then $x \,\mathcal{R}'\, y$. We will denote the collection of transfer systems along with this ordering by $(\mathsf{Tr}(L), \leqslant)$. The order on weak factorization systems is given, as usual, by inclusion of the right class.

\begin{prop}[{\cite[Proposition 4.11, Theorem 4.13]{fooqw}}]\label{fooqwresult}
Let $L$ be a finite lattice and $\mathcal{R}$ a transfer system on $L$. Then there is a unique weak factorization system $(\mathcal{L},\mathcal{R})$ on $L$. In particular the assignment
\[
\mathcal{R} \longleftrightarrow ({}^\boxslash \mathcal{R}, \mathcal{R})
\]
is a poset isomorphism between the poset of transfer systems and the poset of weak factorization systems.
\end{prop}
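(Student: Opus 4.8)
The plan is to work entirely inside the category attached to the finite lattice $L$, where a morphism is an ordered pair $a \leqslant b$ and a commutative square is just a chain of inequalities. The first step is to record the explicit form of the lifting relation: for morphisms $i = (a \leqslant b)$ and $p = (x \leqslant y)$ one has $i \boxslash p$ if and only if ($a \leqslant x$ and $b \leqslant y$) imply $b \leqslant x$ — uniqueness of lifts is automatic in a poset. I will also note two simplifications special to posets. First, a retract diagram in a poset forces an equality of morphisms, so \emph{every} class of morphisms in $L$ is closed under retracts. Second, in a factorization $(a \leqslant b) = (c \leqslant b)\circ(a \leqslant c)$, the map $(a\leqslant c)$ is an identity as soon as $(a \leqslant c)\boxslash(a\leqslant b)$ (lift the square with identity top and $(c\leqslant b)$ bottom), and dually; this yields the standard identities $\mathcal L = {}^{\boxslash}\mathcal R$ and $\mathcal R = \mathcal L^{\boxslash}$ for any weak factorization system $(\mathcal L,\mathcal R)$ on $L$ essentially for free, together with uniqueness of the weak factorization system having a prescribed right class.

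Next I would show that the right class $\mathcal R$ of any weak factorization system on $L$ is a transfer system. Reflexivity and transitivity of $\mathcal R$ as a relation hold because $\mathcal R = \mathcal L^{\boxslash}$ contains all identities and is closed under composition; antisymmetry is automatic since $\mathcal R$ refines $\leqslant$. For the restriction axiom, I use that right classes of weak factorization systems, being of the form $\mathcal L^{\boxslash}$, are closed under pullback, together with the observation that in a lattice the pullback of $x \leqslant y$ along $z \leqslant y$ is exactly $x \wedge z \leqslant z$; hence $x \,\mathcal R\, y$ and $z \leqslant y$ give $(x \wedge z)\,\mathcal R\,z$.

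The substance of the argument is the converse: given a transfer system $\mathcal R$, the pair $({}^{\boxslash}\mathcal R, \mathcal R)$ is a weak factorization system. Lifting holds by definition of ${}^{\boxslash}(-)$ and closure under retracts is automatic, so the only real point is factorization. The key lemma is that the set of $\mathcal R$-predecessors of a fixed element $b$ is closed under meets: if $z_1 \,\mathcal R\, b$ and $z_2\,\mathcal R\,b$, then the restriction axiom gives $(z_1 \wedge z_2)\,\mathcal R\,z_2$ and transitivity gives $(z_1\wedge z_2)\,\mathcal R\,b$. Given a morphism $a \leqslant b$, I therefore set $c := \bigwedge\{\,z : a \leqslant z,\ z\,\mathcal R\,b\,\}$, a meet over a nonempty finite set (it contains $b$), so that $a \leqslant c \leqslant b$ and $c\,\mathcal R\,b$. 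It remains to check $(a \leqslant c) \in {}^{\boxslash}\mathcal R$: given $(x \leqslant y)\in\mathcal R$ with $a \leqslant x$ and $c \leqslant y$, the restriction axiom yields $(x \wedge c)\,\mathcal R\,c$, hence $(x \wedge c)\,\mathcal R\,b$; since also $a \leqslant x \wedge c$, the element $x \wedge c$ lies in the set defining $c$, forcing $c \leqslant x \wedge c \leqslant x$, which is the required lift. I expect this verification — identifying the correct intermediate object $c$ and confirming the left-lifting property — to be the main obstacle, although the meet-closure lemma makes it clean.

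Finally, combining the two constructions with the identity $\mathcal L = {}^{\boxslash}\mathcal R$ shows that $\mathcal R \mapsto ({}^{\boxslash}\mathcal R,\mathcal R)$ and $(\mathcal L,\mathcal R)\mapsto \mathcal R$ are mutually inverse bijections, and uniqueness of the weak factorization system with right class $\mathcal R$ follows as noted. For the poset statement, the order on transfer systems (refinement) and the order on weak factorization systems (inclusion of right classes) are literally the same condition on $\mathcal R$, so the bijection is tautologically an order isomorphism in both directions.
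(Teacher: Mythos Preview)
The paper does not give its own proof of this proposition; it is stated with a citation to \cite[Proposition 4.11, Theorem 4.13]{fooqw} and used as a black box. Hence there is no in-paper argument to compare against.

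That said, your proposal is correct and is essentially the argument one finds in the cited source. The explicit description of $\boxslash$ in a poset, the observation that retracts are equalities (making condition (3) vacuous and forcing $\mathcal L={}^{\boxslash}\mathcal R$, $\mathcal R=\mathcal L^{\boxslash}$ via the retract argument), and the verification that right classes are transfer systems via pullback-closure are all standard and you handle them cleanly. The heart of the matter is indeed the factorization step for an arbitrary transfer system $\mathcal R$, and your choice $c=\bigwedge\{z:a\leqslant z,\ z\,\mathcal R\,b\}$ together with the meet-closure lemma is exactly the right construction; the check that $(a\leqslant c)\in{}^{\boxslash}\mathcal R$ via restriction and minimality of $c$ is correct as written. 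The only cosmetic point is that your sentence ``the map $(a\leqslant c)$ is an identity as soon as $(a\leqslant c)\boxslash(a\leqslant b)$'' is phrased for the dual of the inclusion you actually need first (namely ${}^{\boxslash}\mathcal R\subseteq\mathcal L$, which uses $(a\leqslant b)\boxslash(c\leqslant b)$ to force $b=c$), but since you invoke ``and dually'' this is harmless.
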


\begin{remark}\label{rem:eitheror}
In light of~\autoref{fooqwresult} we will henceforth use the terms weak factorization system and transfer system interchangeably without any loss of generality. Unless the left class is explicitly required, we will simply write $\RR$ for the unique transfer system $(\LL, \RR)$ that it determines. When writing commutative diagrams, we will use the standard convention of writing $\xymatrix{x \ar@{->>}[r] & y}$ if $x \, \RR \, y$ and $\xymatrix{x \ar@{^(->}[r] &y}$ if $x \, \LL \, y$.
\end{remark}

The collection $\mathsf{Tr}(L)$ (and consequently the collection of weak factorization systems on $L$) has more structure than just being a poset. It is in fact a complete lattice.

\begin{lemma}[{\cite[Proposition 3.7]{fooqw}}]\label{lem:transislat}
Let $L$ be a finite lattice. Then the poset $(\mathsf{Tr}(L), \leqslant)$ is itself a finite lattice, and as such a complete lattice.
\end{lemma}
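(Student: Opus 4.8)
The plan is to realize $\mathsf{Tr}(L)$ as a closure system inside the complete (Boolean) lattice of all binary relations on the underlying set of $L$, and then apply the standard fact that any collection of subsets closed under arbitrary intersection, together with a largest element, forms a complete lattice. Concretely, since $L$ is finite there are only finitely many binary relations on it, so $\mathsf{Tr}(L)$ is finite and it suffices either to produce all binary meets and joins, or — more economically — to check that arbitrary intersections of transfer systems are transfer systems and that $\mathsf{Tr}(L)$ has a top element.

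First I would verify the intersection claim. Let $\{\mathcal{R}_i\}_{i\in I}$ be transfer systems on $L$ and put $\mathcal{R}=\bigcap_i \mathcal{R}_i$, viewing each $\mathcal{R}_i$ as a set of ordered pairs. Reflexivity, transitivity, antisymmetry, and the containment $\mathcal{R}\subseteq{\leqslant}$ are all conditions universally quantified over the pairs belonging to the relation, hence are inherited by intersections. For the transfer axiom of \autoref{def:transfersystem}: if $x\mathrel{\mathcal{R}}y$ and $z\leqslant y$, then $x\mathrel{\mathcal{R}_i}y$ for every $i$, so $(x\wedge z)\mathrel{\mathcal{R}_i}z$ for every $i$, whence $(x\wedge z)\mathrel{\mathcal{R}}z$. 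Thus $\mathcal{R}\in\mathsf{Tr}(L)$, and it is clearly the infimum of the family in $(\mathsf{Tr}(L),\leqslant)$. Next, the full order $\leqslant$ is itself a transfer system (it refines itself, and $x\leqslant y$ together with $z\leqslant y$ gives $x\wedge z\leqslant z$), so $(\mathsf{Tr}(L),\leqslant)$ has a maximum element. A poset closed under arbitrary meets with a top element is a complete lattice: the join of any subfamily $\{\mathcal{R}_j\}_{j\in J}$ is $\bigwedge\{\mathcal{S}\in\mathsf{Tr}(L):\mathcal{R}_j\leqslant\mathcal{S}\text{ for all }j\}$, a meet over a nonempty set since $\leqslant$ always lies in it. Hence $\mathsf{Tr}(L)$ is a (complete) lattice, and by \autoref{fooqwresult} so is the poset of weak factorization systems.

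I do not expect a genuine obstacle here; the one point worth flagging is that the join in $\mathsf{Tr}(L)$ is \emph{not} given by the union of relations — a union of transfer systems need not be transitive, let alone closed under the transfer axiom — so one must use the "intersection of all common upper bounds" description (equivalently, a transfer-system closure operator on relations) rather than any naive formula. The only step requiring care is checking that the transfer axiom survives intersection, and as sketched above this is immediate precisely because that axiom is a statement about the pairs lying in the relation.
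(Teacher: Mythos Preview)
Your proposal is correct and follows essentially the same line as the paper's proof: exhibit the full order $\leqslant$ as the top element, check that the meet in $\mathsf{Tr}(L)$ is given by intersection of relations, and invoke the standard fact that a finite poset with a top element and all meets is a (complete) lattice. The paper simply asserts that meets are intersections and cites \cite[Proposition~3.3.1]{stanley} for the lattice conclusion, whereas you spell out the verification that intersections of transfer systems satisfy each axiom and describe the induced join explicitly; your added caveat that joins are \emph{not} unions is a helpful clarification not present in the paper.
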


\begin{proof}
The greatest element in the finite poset $(\mathsf{Tr}(L), \leqslant)$ is the maximal transfer system $\mathcal{M}$ (i.e., $x \, \mathcal{M}\, y$ if and only if $x \leqslant y$). The meet operation is given by intersection. Thus by~\cite[Proposition 3.3.1]{stanley} the result follows, with the usual observation that any finite lattice is complete.
\end{proof}

From \autoref{lem:transislat}, and the description of premodel structures, we obtain the observation that premodel structures on a finite lattice can be realized by intervals in the lattice of transfer systems. For $(P, \leqslant)$ a poset, we denote by $\mathbb{I}(P)$ the poset of intervals of $L$. That is, $\mathbb{I}(P)$ is the poset whose elements are pairs $(a \leqslant_P b)$ with $a,b \in P$, and we say $(a \leqslant_P b) \leqslant_{\mathbb{I}(P)} (a' \leqslant_P b')$ if and only if $a \leqslant_P a'$ and $b \leqslant_P b'$.  

If $L$ is a complete lattice, then the collection of intervals is also a complete lattice. Indeed, for a collection of intervals $(a \leqslant_P b)_i$ we compute $\bigwedge (a \leqslant_P b)_i = (\bigwedge a_i) \leqslant_P (\bigwedge b_i)$ using the fact that $L$ is a lattice (and similarly for the join). As such, we obtain the result that for a fixed lattice $L$ the collection $\mathsf{P}(L)$ is itself a complete lattice.

\begin{lemma}\label{lem:premodelislat}
Let $L$ be a finite lattice. Then there is a bijection between the lattice of premodel structures and the lattice of intervals $\mathbb{I}(\mathsf{Tr}(L),\leqslant)$. In this lattice we have that for two premodel structures $P_1$ and $P_2$, $P_1 \leqslant P_2$ if and only if the identity functor $\mathrm{id} \colon P_1 \to P_2$ is right Quillen.
\end{lemma}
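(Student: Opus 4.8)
The plan is to establish the two assertions in sequence. For the bijection, I would exhibit explicit maps in both directions. Given a premodel structure on $L$, i.e.\ two weak factorization systems $(\LL,\RR)$ and $(\LL',\RR')$ with $\RR \subseteq \RR'$, by \autoref{fooqwresult} each weak factorization system is determined by its transfer system, so I associate the pair $(\RR \leqslant \RR')$, which is an element of $\mathbb{I}(\mathsf{Tr}(L),\leqslant)$ precisely because $\RR \subseteq \RR'$ means $\RR \leqslant \RR'$ in the refinement order. Conversely, given an interval $(\mathcal{S} \leqslant \mathcal{S}')$ in $\mathsf{Tr}(L)$, \autoref{fooqwresult} produces unique weak factorization systems $({}^\boxslash\mathcal{S},\mathcal{S})$ and $({}^\boxslash\mathcal{S}',\mathcal{S}')$, and since $L$ is a finite lattice it has all finite limits and colimits, so these data assemble into a premodel category in the sense of \autoref{def:premodel}. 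These two assignments are mutually inverse by \autoref{fooqwresult}, and both are order-preserving for the orders described before \autoref{lem:premodelislat} (componentwise refinement on intervals versus the ambient order on premodel structures), so this is a poset isomorphism, hence a lattice isomorphism since $\mathbb{I}(\mathsf{Tr}(L),\leqslant)$ is a complete lattice by the discussion preceding the statement.

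For the second assertion, I would unwind what $P_1 \leqslant P_2$ means under this bijection: writing $P_j$ as the pair of transfer systems $(\RR_j \leqslant \RR_j')$, the inequality $P_1 \leqslant P_2$ in $\mathbb{I}(\mathsf{Tr}(L),\leqslant)$ says exactly $\RR_1 \subseteq \RR_2$ and $\RR_1' \subseteq \RR_2'$, equivalently $\LL_1 \supseteq \LL_2$ and $\LL_1' \supseteq \LL_2'$. Recall that for a functor between premodel categories to be a left Quillen functor it must preserve colimits and send the first-factorization left class (cofibrations) and the second-factorization left class (trivial cofibrations, or rather the designated "anodyne" left maps) into the corresponding classes of the target, with the right adjoint the identity doing the dual. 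Since the identity functor $\id\colon L \to L$ trivially preserves all limits and colimits, $\id\colon P_1 \to P_2$ being left Quillen is equivalent to $\LL_1' \supseteq \LL_2'$ together with... —here I need to be careful about which variance and which of $\LL,\RR$ play the cofibration role, and the statement is phrased in terms of $\id$ being \emph{right} Quillen from $P_1$ to $P_2$, so I would match the adjunction so that the identity-as-right-adjoint going $P_1 \to P_2$ has left adjoint identity going $P_2 \to P_1$, and the Quillen condition becomes $\RR_1 \subseteq \RR_2$ and $\RR_1' \subseteq \RR_2'$, i.e.\ the right classes of $P_1$ refine those of $P_2$. This is precisely $P_1 \leqslant P_2$.

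The main obstacle I anticipate is purely bookkeeping: getting the variances right so that "$\id$ is right Quillen" lines up with "$P_1 \leqslant P_2$" rather than "$P_1 \geqslant P_2$", since there are several sign-like conventions in play (inclusion of right classes versus reverse inclusion of left classes, the order on intervals being componentwise, and the direction of a Quillen adjunction). I would resolve this by pinning down one reference convention — say, that a Quillen adjunction $F \dashv G$ has $F$ preserving cofibrations and trivial cofibrations — and then checking a single elementary example (e.g.\ $L = [1]$, or the comparison of the initial and terminal premodel structures) to confirm the direction. The underlying content beyond this is light: everything reduces to \autoref{fooqwresult} plus the elementary observation that on a finite lattice $L$ all small limits and colimits exist, so no weak factorization system fails to extend to genuine premodel data, and that the identity functor is a Quillen functor exactly when it respects the relevant factorization classes in the appropriate direction, which translates directly into containment of transfer systems.
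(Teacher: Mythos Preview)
Your proposal is correct and follows essentially the same approach as the paper. The paper's proof is terser---it simply says the bijection is ``immediate from construction'' and then identifies $P_1 \leqslant P_2$ with $\RR_1 \leqslant \RR_2$ and $\RR_1' \leqslant \RR_2'$, noting this means the identity preserves fibrations and acyclic fibrations---but your more explicit invocation of \autoref{fooqwresult} and your unwinding of the interval order amount to the same argument. Your hedging about variance conventions is unnecessary here: in the premodel convention used in the paper, $\RR'$ is the class of fibrations and $\RR$ the class of acyclic fibrations, so ``$\id\colon P_1 \to P_2$ is right Quillen'' is exactly $\RR_1 \subseteq \RR_2$ and $\RR_1' \subseteq \RR_2'$, with no ambiguity to resolve by example.
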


\begin{proof}
The statement regarding the  bijection between the lattice of premodel structures and the lattice of intervals $\mathbb{I}(\mathsf{Tr}(L),\leqslant)$ is immediate from construction. If $P_1 = (\RR_1 \leqslant \RR_1')$ and 
$P_2 = (\RR_2 \leqslant \RR_2')$, then unraveling the definition of the ordering on intervals, we have $P_1 \leqslant P_2$ if and only if $\RR_1 \leqslant \RR_2$ and $\RR_1' \leqslant \RR_2'$. Using terminology more familiar to (pre)model structure, this happens if and only if the identity functor preserves acyclic fibrations and fibrations. That is, the identity functor is right Quillen.
\end{proof}

\section{Composition closed premodel structures as lattice intervals}\label{sec:ccclosedaslattice}

In this section we will give conditions on a general premodel structure on a finite lattice such that the weak equivalences are closed under composition. We prove that there is an ordering which refines the natural inclusion ordering on weak factorization systems such that the intervals realize the composition closed premodel structures. 

Following \autoref{rem:eitheror} we can succinctly record the data of a premodel structure on a finite lattice as the pair $\RR \leqslant \RR'$ of weak factorization systems.

We begin with an observation regarding a characterization of when the weak equivalences --- which we recall are defined as $\RR \circ \LL'$ where $\LL' = {}^\boxslash \RR$ --- are closed under composition.

\begin{lemma}\label{lem:composition}
Let $\mathcal{C}$ be a category. Suppose that $\mathcal{A}, \mathcal{B} \subseteq \mathrm{Mor}(\mathcal{C})$ are two sets of morphisms containing all identity maps which are closed under composition. Then $\mathcal{B} \circ \mathcal{A}$ is closed under composition if and only if $\mathcal{A} \circ \mathcal{B} \subseteq \mathcal{B} \circ \mathcal{A}$.
\end{lemma}

\begin{proof}
By definition, $\mathcal{B} \circ \mathcal{A}$ is closed under composition if and only if $(\mathcal{B} \circ \mathcal{A}) \circ (\mathcal{B} \circ \mathcal{A}) = \mathcal{B} \circ \mathcal{A}$. Suppose that $\mathcal{B} \circ \mathcal{A}$ is closed under composition. Since both classes of morphisms contain all identity maps,
\[\mathcal{B} \circ \mathcal{A} \circ \mathcal{B} \circ \mathcal{A} \supseteq \mathcal{A} \circ \mathcal{B},\]
and it follows that $\mathcal{A} \circ \mathcal{B} \subseteq \mathcal{B} \circ \mathcal{A}$ as required.

Conversely, assume that $\mathcal{A} \circ \mathcal{B} \subseteq \mathcal{B} \circ \mathcal{A}$. Since $\mathcal{A}$ and $\mathcal{B}$ are both closed under composition we have that
\[\mathcal{B} \circ (\mathcal{A} \circ \mathcal{B}) \circ \mathcal{A} \subseteq \mathcal{B} \circ (\mathcal{B} \circ \mathcal{A}) \circ \mathcal{A} = \mathcal{B} \circ \mathcal{A}.\]
The result follows.
\end{proof}

We can now start moving towards our desired refined ordering on $\mathsf{Tr}(L)$ whose intervals detect the composition closed premodel structures. We highlight that the following results do not require any finiteness assumptions on the underlying lattice.

\begin{prop}\label{prop:transitiverel}
Let $\RR \leqslant \RR' \leqslant \RR''$ be weak factorization systems on an arbitrary lattice $L$ with corresponding left classes $\mathcal{L} \geqslant \mathcal{L}' \geqslant \mathcal{L}''$. Suppose further that $\RR \circ \mathcal{L}'$ and $\RR' \circ \mathcal{L}''$ are closed under composition. Then $\RR \circ \mathcal{L}''$ is closed under composition. In particular, being closed under composition is a transitive relation.
\end{prop}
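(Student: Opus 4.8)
The plan is to derive this from \autoref{lem:composition} together with the elementary fact that a weak factorization system on a poset produces a \emph{unique} factorization of each morphism. Since $\RR$ is a right class and $\mathcal{L}''$ a left class of a weak factorization system, both contain every identity and are closed under composition, so \autoref{lem:composition} applied with $\mathcal{B}=\RR$ and $\mathcal{A}=\mathcal{L}''$ reduces the claim to the single inclusion $\mathcal{L}''\circ\RR\subseteq\RR\circ\mathcal{L}''$.

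To establish that inclusion I would take an arbitrary $\varphi\in\mathcal{L}''\circ\RR$ and route it through the two hypotheses. From $\mathcal{L}''\subseteq\mathcal{L}'$ we get $\varphi\in\mathcal{L}'\circ\RR$, and since $\RR\circ\mathcal{L}'$ is closed under composition, \autoref{lem:composition} gives $\mathcal{L}'\circ\RR\subseteq\RR\circ\mathcal{L}'$, so $\varphi\in\RR\circ\mathcal{L}'$; symmetrically, from $\RR\subseteq\RR'$ we get $\varphi\in\mathcal{L}''\circ\RR'$, and closure under composition of $\RR'\circ\mathcal{L}''$ gives $\varphi\in\RR'\circ\mathcal{L}''$. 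Thus it remains to prove $(\RR\circ\mathcal{L}')\cap(\RR'\circ\mathcal{L}'')\subseteq\RR\circ\mathcal{L}''$, and this is the crux, where the hypothesis that $L$ is a poset does the work. Given $\varphi$ in that intersection, choose factorizations $\varphi=r_1\ell_1$ with $r_1\in\RR$, $\ell_1\in\mathcal{L}'$ and $\varphi=r_2\ell_2$ with $r_2\in\RR'$, $\ell_2\in\mathcal{L}''$. Because $\RR\subseteq\RR'$ and $\mathcal{L}''\subseteq\mathcal{L}'$, both are factorizations of $\varphi$ through the weak factorization system $(\mathcal{L}',\RR')$; in a poset such a factorization is unique, since the lifts coming from $\ell_1\boxslash r_2$ and $\ell_2\boxslash r_1$ make the two middle objects mutually $\leqslant$, hence equal, forcing $r_1=r_2$ and $\ell_1=\ell_2$. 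Consequently $r_1=r_2\in\RR$ and $\ell_1=\ell_2\in\mathcal{L}''$, so $\varphi=r_1\ell_1\in\RR\circ\mathcal{L}''$. By \autoref{lem:composition} this shows $\RR\circ\mathcal{L}''$ is closed under composition, and transitivity of ``closed under composition'' is precisely this conclusion read off for the composite $\RR\leqslant\RR'\leqslant\RR''$.

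The one point that needs an argument rather than bookkeeping is the uniqueness of factorizations, and that is just the standard retract comparison of two weak factorization system factorizations specialized to a poset, where it collapses to ``two mutually comparable elements are equal''. Everything else is formal; in particular nothing about $L$ beyond being a poset is used, so no finiteness or completeness assumption intervenes.
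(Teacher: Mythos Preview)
Your proof is correct and follows the same overall strategy as the paper: reduce via \autoref{lem:composition} to the inclusion $\mathcal{L}''\circ\RR\subseteq\RR\circ\mathcal{L}''$, produce from a given morphism two factorizations (one in $\RR\circ\mathcal{L}'$, one in $\RR'\circ\mathcal{L}''$), and show the middle objects coincide via lifting. Your packaging of the final step as ``uniqueness of $(\mathcal{L}',\RR')$-factorizations in a poset'' is a mild streamlining of the paper's execution: the paper obtains $w\leqslant w'$ by one lift, then invokes pullback closure of $\RR$ to deduce $w\,\RR\,w'$ (hence $w\,\RR''\,w'$) and runs a second lift against $\RR''$ to get $w'\leqslant w$; your two symmetric lifts against $\RR'$ bypass that detour and make no use of $\RR''$ or pullback closure at all.
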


\begin{proof}
By \autoref{lem:composition} we have that $\RR \circ \mathcal{L}''$ is closed under composition if and only if $\mathcal{L}'' \circ \RR \subseteq \RR \circ \mathcal{L}''$. Suppose $x \, \RR \, y$ and $y \, \mathcal{L}''\, z$. Since $\mathcal{L}'' \subseteq \mathcal{L}'$, we have $y \,\mathcal{L}' \,z$, and therefore by assumption there is some $w$ such that $x \,\mathcal{L}'\, w \,\RR \,z$. Similarly, since $\RR \subseteq \RR'$, there is some $w'$ such that $x \,\mathcal{L}'' \,w' \,\RR' \,z$. As such, we have a square
\[
\xymatrix{
x \ar[r]  \ar@{^(->}[d]_{\mathcal{L}'} & w' \ar@{->>}[d]^{\RR'} \\
w \ar@{..>}[ur] \ar[r] & z
}
\]
and we conclude that $w \leqslant w'$. However, $w \,\RR\, z$ and $w \leqslant w' \leqslant z$ implies that $w \,\RR \,w'$, and therefore $w \RR'' w'$. This gives us another square
\[
\xymatrix{
x \ar[r]  \ar@{^(->}[d]_{\mathcal{L}''} & w \ar@{->>}[d]^{\RR''} \\
w' \ar@{..>}[ur] \ar[r] & z \rlap{ .}
}
\]
It follows that $w=w'$, and thus $x \,\mathcal{L}'' \,(w' {=} w) \,\RR \,z$ as required.
\end{proof}

\begin{corollary}
There exists a partial order $\preccurlyeq$ on the set of weak factorization systems of an arbitrary lattice which refines $\leqslant$ such that $\RR \preccurlyeq \RR'$ if and only if $\RR \circ \mathcal{L}'$ is closed under composition.
\end{corollary}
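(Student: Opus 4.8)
The plan is to take the obvious candidate: declare $\RR \preccurlyeq \RR'$ to hold exactly when $\RR \leqslant \RR'$ \emph{and} the class $\RR \circ \LL'$ is closed under composition, where as usual $\LL' = {}^\boxslash \RR'$. With this definition it is immediate that $\preccurlyeq$ refines $\leqslant$, and that a comparable pair $\RR \leqslant \RR'$ satisfies $\RR \preccurlyeq \RR'$ precisely when $\RR \circ \LL'$ is closed under composition (which is the premodel-theoretic content we care about, since $\RR \circ \LL'$ is the class of weak equivalences only once $\RR \leqslant \RR'$). So the only real work is to verify that $\preccurlyeq$ is a partial order.

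For reflexivity, observe that when $\RR = \RR'$ we have $\LL' = \LL$, and the factorization axiom for the weak factorization system $(\LL,\RR)$ says that every morphism of $L$ factors as a map in $\LL$ followed by a map in $\RR$; hence $\RR \circ \LL = \operatorname{Mor}(L)$, which is trivially closed under composition. Thus $\RR \preccurlyeq \RR$ for every weak factorization system $\RR$.

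Antisymmetry is inherited for free: if $\RR \preccurlyeq \RR'$ and $\RR' \preccurlyeq \RR$ then $\RR \leqslant \RR'$ and $\RR' \leqslant \RR$, so $\RR = \RR'$ because $\leqslant$ is already a partial order on weak factorization systems by \autoref{fooqwresult}. Transitivity is exactly the content of \autoref{prop:transitiverel}: given $\RR \preccurlyeq \RR' \preccurlyeq \RR''$, we have $\RR \leqslant \RR' \leqslant \RR''$ and both $\RR \circ \LL'$ and $\RR' \circ \LL''$ are closed under composition, whence $\RR \circ \LL''$ is closed under composition and $\RR \preccurlyeq \RR''$.

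Since all the genuine substance was already extracted in \autoref{prop:transitiverel} (powered by \autoref{lem:composition}), there is no serious obstacle remaining; the only point requiring a moment's care is reflexivity, i.e.\ recognising that the weak equivalences of a degenerate premodel structure of the form $(\RR \leqslant \RR)$ are all of $\operatorname{Mor}(L)$, and being careful to restrict the stated equivalence to comparable pairs so that $\LL'$ plays its intended role.
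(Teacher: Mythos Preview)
Your proof is correct and follows the same route as the paper: reflexivity via the factorization axiom (so $\RR\circ\LL=\operatorname{Mor}(L)$), antisymmetry inherited from $\leqslant$, and transitivity supplied by \autoref{prop:transitiverel}. You are slightly more explicit than the paper in building the hypothesis $\RR\leqslant\RR'$ into the definition of $\preccurlyeq$; this is a sensible clarification, since without it the biconditional in the statement would fail (e.g.\ take $\RR'$ minimal so that $\LL'=\operatorname{Mor}(L)$ and $\RR\circ\LL'=\operatorname{Mor}(L)$ is closed under composition for any $\RR$).
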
 

\begin{proof}
\autoref{prop:transitiverel} tells us that $\preccurlyeq$ is transitive. By the definition of a weak factorization system the relation is clearly reflexive and antisymmetric.
\end{proof}

Now that we have determined that the desired partial ordering $\preccurlyeq$ exists, we work towards a more concrete understanding of what it entails for $\RR$ and $\RR'$.

\begin{lemma}\label{lem:firstsplit}
Let $\RR \leqslant \RR'$. Then $\RR \preccurlyeq \RR'$ if and only if any square of the form
\begin{equation}\label{eq:diagram}
\begin{split}
\xymatrix{
x \ar[r]  \ar@{->>}[d]_{\RR} & z \ar@{->>}[d]^{\RR'} \\
y  \ar@{^(->}[r]_{\mathcal{L}'} & w
}
\end{split}
\end{equation}
has a splitting of the form
\[
\xymatrix{
x \ar[r]  \ar@{->>}[d]_{\RR} & z' \ar@{..>>}[dr]|{\RR} \ar[r] & z \ar@{->>}[d]^{\RR'} \\
y  \ar@{^(->}[rr]_{\mathcal{L}'} && w \rlap{ .}
}
\]
\end{lemma}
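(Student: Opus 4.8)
The plan is to turn both implications into lifting arguments inside the lattice. First I would reduce the statement: by the preceding corollary, $\RR \preccurlyeq \RR'$ means precisely that $\RR \circ \LL'$ is closed under composition, and by \autoref{lem:composition} --- applied with $\mathcal{A} = \LL'$ and $\mathcal{B} = \RR$, which contain the identities and are composition closed --- this is equivalent to the inclusion $\LL' \circ \RR \subseteq \RR \circ \LL'$. Unwinding this in $L$: for all $x \,\RR\, y$ and $y \,\LL'\, w$, the arrow $x \to w$ factors as $x \,\LL'\, u \,\RR\, w$ for some $u$. So the lemma becomes a dictionary between this factorization property and the splittings of \eqref{eq:diagram}. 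I would also record up front the two standing facts I will use repeatedly: $\LL' \subseteq \LL$, hence $\LL' \boxslash \RR$ as well as $\LL' \boxslash \RR'$; and every morphism of $L$ admits an $(\LL',\RR')$-factorization since $(\LL',\RR')$ is a weak factorization system.

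For the forward direction, assume $\RR \preccurlyeq \RR'$ and take a commuting square of the form \eqref{eq:diagram}. Its left and bottom edges present $x \to w$ as a member of $\LL' \circ \RR$, hence of $\RR \circ \LL'$, so there is $u$ with $x \,\LL'\, u \,\RR\, w$. Now the top edge $x \to z$ of the square is exactly the data of a lifting problem of $x \,\LL'\, u$ against $z \,\RR'\, w$ (the relevant square commutes automatically, being in a poset), and a solution is simply the inequality $u \leqslant z$. Setting $z' := u$ then gives $x \leqslant z' \leqslant z$ with $z' \,\RR\, w$, which is the desired splitting.

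For the converse, suppose every square of the form \eqref{eq:diagram} splits and fix $x \,\RR\, y$, $y \,\LL'\, w$; I must exhibit $x \to w$ in $\RR \circ \LL'$. Factor $x \to w$ through $(\LL',\RR')$ as $x \,\LL'\, z \,\RR'\, w$; then $(x,y,z,w)$ fills in a square of the form \eqref{eq:diagram}, so the hypothesis yields $z'$ with $x \leqslant z' \leqslant z$ and $z' \,\RR\, w$. Comparing with the factorization, the lifting problem of $x \,\LL'\, z$ against $z' \,\RR\, w$ --- whose top edge is the inclusion $x \to z'$ coming from the splitting --- has a solution $z \leqslant z'$, forcing $z = z'$; hence $x \,\LL'\, z = z' \,\RR\, w$ is the required factorization.

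The one point that needs genuine attention --- and where I expect the argument to feel subtle --- is recognizing that the apparently undecorated top edge of the square in \eqref{eq:diagram} (and in the splitting) is precisely what makes these two lifting problems solvable, so that ``lying below $z$'' is forced rather than hypothesized; everything else is routine bookkeeping with the lattice order and the inclusion $\LL' \subseteq \LL$. It is worth remarking that both constructions in fact produce a $z'$ with $x \,\LL'\, z'$, a strengthening that should be convenient downstream.
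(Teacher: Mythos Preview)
Your argument is correct and follows essentially the same route as the paper: both directions pass through the characterization $\LL'\circ\RR\subseteq\RR\circ\LL'$, then use an $(\LL',\RR')$-factorization and a single lifting to pin down $z'$. The only cosmetic difference is in the converse, where you force $z=z'$ by lifting $x\,\LL'\,z$ against $z'\,\RR\,w$ directly (using $\LL'\subseteq\LL$), whereas the paper instead invokes pushout and pullback closure to get $z'\,\LL'\,z$ and $z'\,\RR\,z$; these are equivalent one-line finishes.
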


\begin{proof}
Suppose $\RR \preccurlyeq \RR'$ and that we have a square of the form (\ref{eq:diagram}). Since $x \, \RR\, y  \, \mathcal{L}' \, w$ and $\RR \preccurlyeq \RR'$ by assumption, we also have $x \,\mathcal{L}' \,z' \,\RR \,w$ for some $z'$. Thus we have a square
\[
\xymatrix{
x \ar@{^(->}[d]_{\mathcal{L}'}\ar[r] & z \ar@{->>}[d]^{\RR'} \\
z' \ar[r] \ar@{..>}[ur] & w
}
\]
which implies that $x \leqslant z' \leqslant z$. Since $z' \,\RR\, w$, one obtains the desired splitting.

Conversely, suppose $\RR \leqslant \RR'$ satisfies the stated condition and suppose $x \,\RR \,y\, \mathcal{L}' \,w $. Then we can factor $x \to w$ as $x \,\mathcal{L}' \,z\, \RR' \,w$, giving a square
\[
\xymatrix{
x \ar@{^(->}[r]^{\mathcal{L}'} \ar@{->>}[d]_{\RR}& z \ar@{->>}[d]^{\RR'}\\
y \ar@{^(->}[r]_{\mathcal{L}'}& w\rlap{ .}
}
\]
By assumption we can find a splitting
\[
\xymatrix{
x \ar[r] \ar@{->>}[d]_{\RR}& z'\ar[r]  \ar@{..>>}[dr]|{\RR} & z \ar@{->>}[d]^{\RR'}\\
y \ar@{^(->}[rr]_{\mathcal{L}'}&& w
\rlap{ .} }
\]
As $x \,\mathcal{L}' \,z$ and $x \leqslant z' \leqslant z$, closure of the left class under pushouts gives us $z' \,\mathcal{L'} \,z$. However we also have $z' \,\RR \,w$ and $z' \leqslant z \leqslant w$, and  so by closure of the right class under pullbacks we have $z' \,\RR \,z$. As such it follows that $z' = z$, and hence $x \,\mathcal{L}' \,z\, \RR \,w$.
\end{proof}

\begin{remark}
From the proof, we see that it is also the case that the relation $x\to z'$ in the splitting is always in $\mathcal{L}'$.
\end{remark}

We now give a second condition on when $\RR \preccurlyeq \RR'$ which we will see is more amenable to computation.

\begin{prop}\label{prop:onfinite}
Let $\RR \leqslant \RR'$. Then $\RR \preccurlyeq \RR'$ if and only if any square
\[
\xymatrix{
x \ar[r] \ar@{->>}[d]_{\RR} & z \ar@{->>}[d]^{\RR'} \\
y \ar[r] & w
}
\]
has a splitting of the form
\[
\xymatrix{
x \ar[r] \ar@{->>}[d]_{\RR} & z' \ar[r] \ar@{->>}[d]^{\RR} & z \ar@{->>}[d]^{\RR'} \\
y \ar[r] & w'\ar@{->>}[r]_{\RR'} & w
}
\]
\end{prop}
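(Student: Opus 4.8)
The plan is to reduce both directions to the characterization of $\preccurlyeq$ provided by \autoref{lem:firstsplit}, using the two weak factorization systems to manufacture auxiliary squares and using closure of $\RR'$ under pullback (which is exactly the transfer system axiom of \autoref{def:transfersystem}).

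For the direction $(\Leftarrow)$: assuming every square of the displayed shape admits the two-column splitting, I would observe that a square of the form \eqref{eq:diagram} is a special case, since its bottom map lies in $\mathcal{L}' \subseteq \mathrm{Mor}(L)$. Applying the hypothesis produces a splitting with a bottom intermediate $w'$ satisfying $y \leqslant w' \leqslant w$ and $w' \,\RR'\, w$. But because the bottom map $y \to w$ lies in $\mathcal{L}'$, lifting in the square with top edge $y \to w'$, bottom edge $\id_w$, left edge $y \,\mathcal{L}'\, w$ and right edge $w' \,\RR'\, w$ produces a map $w \to w'$, which in a poset forces $w' = w$. The remaining data $x \leqslant z' \leqslant z$ together with $z' \,\RR\, w$ is then exactly the splitting demanded by \autoref{lem:firstsplit}, so $\RR \preccurlyeq \RR'$.

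For the direction $(\Rightarrow)$: starting from a square with $x \,\RR\, y$ on the left and $z \,\RR'\, w$ on the right, I would first factor the bottom map through $(\mathcal{L}',\RR')$ as $y \,\mathcal{L}'\, w' \,\RR'\, w$, and then replace the right edge by the pullback of $z \,\RR'\, w$ along $w' \to w$, namely $z \wedge w' \,\RR'\, w'$. Since $x \leqslant z$ and $x \leqslant y \leqslant w'$ we have $x \leqslant z \wedge w'$, so there is a square with left edge $x \,\RR\, y$, right edge $z \wedge w' \,\RR'\, w'$, and bottom edge the map in $\mathcal{L}'$; \autoref{lem:firstsplit} then supplies an intermediate $z'$ with $x \leqslant z' \leqslant z \wedge w'$ and $z' \,\RR\, w'$. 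Taking this $z'$ as the top intermediate and $w'$ as the bottom intermediate yields the required splitting: the columns are decorated $\RR,\RR,\RR'$ with $w' \,\RR'\, w$ along the bottom, and every inequality needed for commutativity (e.g.\ $z' \leqslant z \wedge w'$, hence $z' \leqslant z$ and $z' \leqslant w'$) is in hand.

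I expect the one genuinely delicate point to be the choice of auxiliary square in $(\Rightarrow)$. Applying \autoref{lem:firstsplit} after merely factoring the bottom map gives a top intermediate bounded above by $w'$ rather than by $z$, which is the wrong shape; interposing the pullback of $z \,\RR'\, w$ along $w' \to w$ is precisely what additionally pins the intermediate below $z$. The collapse $w' = w$ in $(\Leftarrow)$, the verification that $\RR'$ is closed under this pullback (the transfer system axiom), and all the commutativity bookkeeping — everything taking place in a poset — should be routine.
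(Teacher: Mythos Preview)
Your proposal is correct and follows essentially the same route as the paper: both directions are reduced to \autoref{lem:firstsplit}, with $(\Leftarrow)$ collapsing $w'=w$ via the lift of $y\,\mathcal{L}'\,w$ against $w'\,\RR'\,w$, and $(\Rightarrow)$ first factoring the bottom map as $y\,\mathcal{L}'\,w'\,\RR'\,w$ and then pulling back $z\,\RR'\,w$ to $z\wedge w'\,\RR'\,w'$ before invoking \autoref{lem:firstsplit}. Your identification of the pullback step as the one non-obvious move is exactly right.
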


\begin{proof}
Suppose $\RR \leqslant \RR'$ satisfies the above condition. Then for any square
\[
\xymatrix{
x \ar[r] \ar@{->>}[d]_{\RR} & z \ar@{->>}[d]^{\RR'} \\
y \ar@{^(->}[r]_{\mathcal{L}'} & w \rlap{ ,}
}
\]
we obtain a splitting
\[
\xymatrix{
	x \ar[r] \ar@{->>}[d]_{\RR} & z' \ar[r] \ar@{->>}[d]^{\RR} & z \ar@{->>}[d]^{\RR'} \\
	y \ar[r] & w'\ar@{->>}[r]_{\RR'} & w \rlap{ .}
}
\]
But then we have a diagram
\[
\xymatrix{
	y \ar[r] \ar@{^(->}[d]_{\LL'} & w' \ar@{->>}[d]^{\RR} \\
	w \ar[r]\ar@{..>}[ru] & w
}
\]
and hence $w' = w$, so we've already obtained the desired splitting required by \autoref{lem:composition}, and as such $\RR \preccurlyeq \RR'$.

Conversely, assume that $\RR \preccurlyeq \RR'$, and suppose we have a square
\[
\xymatrix{
x \ar[r] \ar@{->>}[d]_{\RR} & z \ar@{->>}[d]^{\RR'} \\
y \ar[r] & w \rlap{ .}
}
\]
We can factor $y \to w$ as $y \, \mathcal{L}' \,w' \,\RR' \,w$, and we have $\widetilde{z} := (z {\wedge} w') \, \RR' \, w'$ by closure under pullbacks. We obtain a square
\[
\xymatrix{
x \ar[r] \ar@{->>}[d]_{\RR} & \widetilde{z} \ar@{->>}[d]^{\RR'} \\
y \ar@{^(->}[r]_{\mathcal{L}'} & w' \rlap{ ,}
}
\]
and hence by \autoref{lem:firstsplit} we can find a splitting
\[
\xymatrix{
x \ar[r] \ar@{->>}[d]_{\RR}& z'\ar[r]  \ar@{..>>}[dr]|{\RR} & \widetilde{z} \ar@{->>}[d]^{\RR'}\\
y \ar@{^(->}[rr]_{\mathcal{L}'}&& w'}
\]
which gives
\[
\xymatrix{
x \ar[r] \ar@{->>}[d]_{\RR} & z' \ar[r] \ar@{->>}[d]^{\RR} & z \ar@{->>}[d]^{\RR'} \\
y \ar[r] & w' \ar@{->>}[r]_{\RR'} & w
}
\]
as required.
\end{proof}

The statement of \autoref{prop:onfinite} is particularly useful as it doesn't make any reference to cofibrations. In fact the structure of the constraint in \autoref{prop:onfinite} enables us to prove for any finite lattice $L$, that $(\mathsf{Tr}(L), \preccurlyeq)$, is not just a poset but a lattice.

\begin{corollary}\label{cor:closure}
Let $L$ be a finite lattice and $\RR \preccurlyeq \RR',\RR''$. Then $\RR\preccurlyeq\RR'\cap\RR''$.
\end{corollary}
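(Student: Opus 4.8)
The plan is to reduce to the right‑class‑only criterion of \autoref{prop:onfinite} and then run a termination argument powered by the finiteness of $L$. First I would record the structural preliminaries: $\RR'\cap\RR''$ is again a transfer system (it is the meet of $\RR'$ and $\RR''$ in the lattice $\mathsf{Tr}(L)$ of \autoref{lem:transislat}), and $\RR\leqslant\RR'\cap\RR''$ since $\preccurlyeq$ refines $\leqslant$. So it is meaningful to ask whether $\RR\preccurlyeq\RR'\cap\RR''$, and by \autoref{prop:onfinite} this amounts to showing that every square
\[
\xymatrix{
x \ar[r] \ar@{->>}[d]_{\RR} & z \ar@{->>}[d]^{\RR'\cap\RR''} \\
y \ar[r] & w
}
\]
admits a splitting of the form displayed in \autoref{prop:onfinite}, i.e.\ elements $z',w'$ with $x\leqslant z'\leqslant z$, $y\leqslant w'\leqslant w$, $z'\,\RR\,w'$ and $w'\,(\RR'\cap\RR'')\,w$; recall that $a\,(\RR'\cap\RR'')\,b$ just means $a\,\RR'\,b$ and $a\,\RR''\,b$.

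The key point is that one should \emph{not} attempt to combine the splittings produced by $\RR\preccurlyeq\RR'$ and $\RR\preccurlyeq\RR''$ separately: the right class of a weak factorization system interacts poorly with meets taken on the source, so the naive intersection stalls. Instead I would build $(z',w')$ as the eventually‑constant term of a monotone increasing sequence. Set $(z'_0,w'_0):=(x,y)$, which satisfies $z'_0\,\RR\,w'_0$, $x\leqslant z'_0\leqslant z$ and $y\leqslant w'_0\leqslant w$. Given $(z'_k,w'_k)$ satisfying these same three conditions: if $w'_k\,\RR'\,w$ and $w'_k\,\RR''\,w$, stop and output $(z'_k,w'_k)$. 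Otherwise at least one of these fails, say $w'_k\not\RR'\,w$ (the case $w'_k\not\RR''\,w$ being symmetric, using $\RR\preccurlyeq\RR''$). Then the square with left edge $z'_k\,\RR\,w'_k$, right edge $z\,\RR'\,w$ and horizontal maps $z'_k\leqslant z$, $w'_k\leqslant w$ is a legitimate input to \autoref{prop:onfinite} for $\RR\preccurlyeq\RR'$, so it produces $z'_k\leqslant z'_{k+1}\leqslant z$ and $w'_k\leqslant w'_{k+1}\leqslant w$ with $z'_{k+1}\,\RR\,w'_{k+1}$ and $w'_{k+1}\,\RR'\,w$. The three invariants persist, and since $w'_{k+1}\,\RR'\,w$ while $w'_k\not\RR'\,w$ we must have $w'_{k+1}\neq w'_k$, hence $w'_k< w'_{k+1}$ strictly.

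Thus every step that does not terminate strictly increases $w'_k$ in $L$; since $L$ is finite this can occur only finitely often, so the process halts, and at the halting step $(z'_k,w'_k)$ is exactly the splitting demanded by \autoref{prop:onfinite} for the original square, whence $\RR\preccurlyeq\RR'\cap\RR''$. I expect the main obstacle to be recognizing that this monotone iteration with a ``strictly increase $w'$ or else terminate'' dichotomy is the right device, since the direct combination of splittings is tempting but genuinely fails. A secondary wrinkle to treat carefully is that the iteration may oscillate—repairing the $\RR'$‑condition, then breaking it while repairing the $\RR''$‑condition, and so on—which is harmless precisely because each repair strictly increases $w'_k$, so finiteness of $L$ still forces termination. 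Everything else (preservation of the three invariants, legitimacy of the square fed to \autoref{prop:onfinite}, maintenance of $z'_{k+1}\leqslant z$, and the fact that in a poset all the relevant squares and triangles commute automatically) is routine.
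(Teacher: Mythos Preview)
Your proof is correct and follows essentially the same approach as the paper: both use \autoref{prop:onfinite} together with finiteness of $L$ to locate a splitting that cannot be further improved, and then observe that such a splitting already satisfies the required condition for both $\RR'$ and $\RR''$. The only cosmetic difference is that the paper jumps directly to a \emph{maximal} splitting $(\tilde z,\tilde w)$ and argues that applying \autoref{prop:onfinite} to either $\RR'$ or $\RR''$ must return $(\tilde z,\tilde w)$ again by maximality, whereas you reach the same endpoint by an ascending-chain iteration with strict increase in $w'_k$; these are two packagings of the same idea.
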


\begin{proof}
Suppose we have a square
\[
\xymatrix{
	x \ar[r] \ar@{->>}[d]_{\RR} & z \ar@{->>}[d]^{\RR'\cap\RR''} \\
	y \ar[r] & w \rlap{ ,}
}
\]
and let $\tilde{z} \,\RR \, \tilde{w}$ be maximal among splittings
\[
\xymatrix{
	x \ar[r] \ar@{->>}[d]_{\RR} & \tilde{z} \ar[r] \ar@{->>}[d]^{\RR} & z \ar@{->>}[d]^{\RR'\cap\RR''} \\
	y \ar[r] & \tilde{w}\ar[r] & w \rlap{ .}
}
\]

By \autoref{prop:onfinite}, we have two splittings
\[
\xymatrix{
	\tilde{z} \ar[r] \ar@{->>}[d]_{\RR} & z' \ar[r] \ar@{->>}[d]^{\RR} & z \ar@{->>}[d]^{\RR'} \\
	\tilde{w} \ar[r] & w'\ar@{->>}[r]_{\RR'} & w
}
\]
\[
\xymatrix{
	\tilde{z} \ar[r] \ar@{->>}[d]_{\RR} & z'' \ar[r] \ar@{->>}[d]^{\RR} & z \ar@{->>}[d]^{\RR''} \\
	\tilde{w} \ar[r] & w''\ar@{->>}[r]_{\RR''} & w \rlap{ .}
}
\]
By maximality of $\tilde{z}\to\tilde{w}$ we must have $\tilde{z} = z' = z''$ and $\tilde{w} = w' = w''$, and hence $\tilde{w}\to w\in\RR'\cap\RR''$.
\end{proof}

This observation brings us to the main result of this section.

\begin{theorem}\label{thm:islattice}
Let $L$ be a finite lattice. Then $(\mathsf{Tr}(L), \preccurlyeq)$ is a finite lattice.  As such, the collection $\mathsf{C}(L)$ of composition closed premodel structures on $L$ is a finite lattice.
\end{theorem}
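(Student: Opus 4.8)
The plan is to establish the substantive claim that $(\mathsf{Tr}(L),\preccurlyeq)$ is a finite lattice; the statement about $\mathsf{C}(L)$ is then immediate. Indeed, by \autoref{lem:premodelislat} and the definition of $\preccurlyeq$, a premodel structure $(\RR\leqslant\RR')$ is composition closed precisely when $\RR\preccurlyeq\RR'$, so under the identification of premodel structures with intervals the collection $\mathsf{C}(L)$ becomes exactly the poset $\mathbb{I}(\mathsf{Tr}(L),\preccurlyeq)$ of intervals of $(\mathsf{Tr}(L),\preccurlyeq)$; once the latter is a finite lattice, so is its interval poset, by the same computation ($\bigwedge_i(a_i\preccurlyeq b_i)=(\bigwedge_i a_i\preccurlyeq\bigwedge_i b_i)$, and dually for joins) recorded earlier in the paper for $\mathbb{I}(\mathsf{Tr}(L),\leqslant)$. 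So everything reduces to showing $(\mathsf{Tr}(L),\preccurlyeq)$ is a lattice.

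Since $\mathsf{Tr}(L)$ is finite, it suffices to exhibit a $\preccurlyeq$-least element together with all binary $\preccurlyeq$-joins: a finite poset with a bottom element in which every pair has a join is automatically a lattice, since $a\wedge b$ can be formed as the join of the (nonempty, containing the bottom) set of common lower bounds of $a$ and $b$. First I would record the two bounding objects. The maximal transfer system $\mathcal{M}$ is $\preccurlyeq$-greatest: its left class ${}^{\boxslash}\mathcal{M}$ consists only of identities (in a poset, a morphism with the left lifting property against every morphism must be an identity), so $\RR\circ{}^{\boxslash}\mathcal{M}=\RR$, which is closed under composition as $\RR$ is a partial order, hence transitive; thus $\RR\preccurlyeq\mathcal{M}$ for every $\RR$. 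Dually the discrete transfer system $\mathcal{N}$ (with $x\,\mathcal{N}\,y$ iff $x=y$) is $\preccurlyeq$-least: $\mathcal{N}\circ{}^{\boxslash}\RR={}^{\boxslash}\RR$ is closed under composition, since the left class of any weak factorization system is, so $\mathcal{N}\preccurlyeq\RR$ for every $\RR$.

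For binary joins, fix $\RR_1,\RR_2$ and let $V$ be the set of common $\preccurlyeq$-upper bounds of $\RR_1$ and $\RR_2$. It is nonempty (it contains $\mathcal{M}$), and by \autoref{cor:closure}, applied repeatedly using finiteness of $\mathsf{Tr}(L)$, it is closed under the $\leqslant$-meet $\cap$. Hence $V$ contains its $\leqslant$-infimum $\RR_0:=\bigcap_{\mathcal{S}\in V}\mathcal{S}$, so in particular $\RR_1\preccurlyeq\RR_0$ and $\RR_2\preccurlyeq\RR_0$. The plan is then to prove that $\RR_0$ is moreover the $\preccurlyeq$-least element of $V$; granting this, $\RR_0$ is a common $\preccurlyeq$-upper bound of $\RR_1,\RR_2$ lying below (in $\preccurlyeq$) every other one, so $\RR_0=\RR_1\vee_{\preccurlyeq}\RR_2$ exists, which completes the argument.

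Verifying that the $\leqslant$-infimum $\RR_0$ is actually $\preccurlyeq$-least in $V$ is the step I expect to be the main obstacle. Because $\preccurlyeq$ is strictly finer than $\leqslant$ — for instance $\{0\to1\}\leqslant\{0\to1,0\to2\}$ on $[2]$, while $\{0\to1\}\preccurlyeq\{0\to1,0\to2\}$ fails — the relation $\RR_0\leqslant\mathcal{S}$, which holds for free when $\mathcal{S}\in V$, does not upgrade automatically to $\RR_0\preccurlyeq\mathcal{S}$, so the defining minimality of $\RR_0$ among common $\preccurlyeq$-upper bounds must genuinely be used. The plan is to prove $\RR_0\preccurlyeq\mathcal{S}$ by checking the splitting criterion of \autoref{prop:onfinite} through a maximal-splitting argument modeled on the proof of \autoref{cor:closure}: starting from a square with left leg in $\RR_0$ and right leg in $\mathcal{S}$, take a $\leqslant$-maximal partial splitting through $\RR_0$-maps and then force it to be a full splitting, playing the relations $\RR_1,\RR_2\preccurlyeq\RR_0$ and $\RR_1,\RR_2\preccurlyeq\mathcal{S}$ off against the fact that no proper $\leqslant$-refinement of $\RR_0$ can be a common $\preccurlyeq$-upper bound of $\RR_1$ and $\RR_2$. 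Reconciling the given $\RR_0$-level square with the splittings supplied by the coarser systems $\RR_1,\RR_2$ is the delicate technical point, and is where I expect most of the work to lie.
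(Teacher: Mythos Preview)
Your overall strategy coincides with the paper's: use \autoref{cor:closure} to show that the set $V$ of common $\preccurlyeq$-upper bounds of $\RR_1,\RR_2$ is closed under intersection, put $\RR_0=\bigcap V\in V$, and combine with the existence of the trivial transfer system as bottom element to conclude that $(\mathsf{Tr}(L),\preccurlyeq)$ is a lattice. The paper's proof is only a few lines and in fact stops at exactly the point you flag as the main obstacle: it simply asserts that ``we can take $\RR\vee\RR'$ to be'' the intersection $\RR_0$, without verifying that $\RR_0\preccurlyeq\mathcal{S}$ for every $\mathcal{S}\in V$. So you have put your finger on a step the paper elides rather than manufactured extra work; the implication $\RR_0\leqslant\mathcal{S}\Rightarrow\RR_0\preccurlyeq\mathcal{S}$ is, as your $[2]$ example shows, not automatic.

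Your proposed fix, however, is only a sketch, and the analogy with the proof of \autoref{cor:closure} is looser than you suggest. In that corollary the maximal splitting is taken through the common \emph{lower} system $\RR$, so the same $\RR$-edge can be fed into both relations $\RR\preccurlyeq\RR'$ and $\RR\preccurlyeq\RR''$. Here the roles are reversed: $\RR_1,\RR_2$ sit below $\RR_0$, so a square whose left leg lies in $\RR_0$ need not have its left leg in either $\RR_i$, and the hypotheses $\RR_i\preccurlyeq\RR_0$ and $\RR_i\preccurlyeq\mathcal{S}$ from \autoref{prop:onfinite} do not apply to it directly. Your fallback idea --- exploit the $\leqslant$-minimality of $\RR_0$ in $V$ by producing, from a hypothetical non-splittable square, a strictly $\leqslant$-smaller common $\preccurlyeq$-upper bound of $\RR_1,\RR_2$ --- is the natural move, but actually constructing that smaller element of $V$ is genuine work that neither your proposal nor the paper's proof carries out.
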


\begin{proof}
The poset $(\mathsf{Tr}(L), \preccurlyeq)$ has joins by \autoref{cor:closure}, since we can take $\RR\vee\RR'$ to be the intersection of all transfer systems $\RR''$ with $\RR\preccurlyeq\RR''$ and $\RR'\preccurlyeq\RR''$. Since it also has a minimal element given by the trivial transfer system, $(\mathsf{Tr}(L), \preccurlyeq)$ forms a lattice. The claim regarding $\mathsf{C}(L)$ then follows again from the observation that the collection of intervals on a finite lattice is once again a finite lattice.
\end{proof}

\begin{remark}
The finiteness assumption in \autoref{thm:islattice} is essential as we now demonstrate.  Let $L$ be the lattice $(\mathbb{N} \cup \{\infty \}) \times [1]$ equipped with the following transfer systems $\RR_1 \leqslant \RR_2$.
\[
\xymatrix{
x_1 \ar[r] \ar@{->>}[d]^{\RR_1} & \ar[r] x_2 \ar[r] \ar@{->>}[d]^{\RR_1} & x_3 \ar[r] \ar@{->>}[d]^{\RR_1} & \cdots \ar[r] & x_\infty \ar[d] \\
y_1 \ar[r] & \ar[r] y_2 \ar[r] & y_3 \ar[r] & \cdots \ar[r] & y_\infty
}
\]
\[
\xymatrix{
x_1 \ar[r] \ar@{->>}[d]^{\RR_2} & \ar[r] x_2 \ar[r] \ar@{->>}[d]^{\RR_2} & x_3 \ar[r] \ar@{->>}[d]^{\RR_2} & \cdots \ar[r] & x_\infty \ar@{->>}[d]^{\RR_2} \\
y_1 \ar[r] & \ar[r] y_2 \ar[r] & y_3 \ar[r] & \cdots \ar[r] & y_\infty
}
\]
Suppose that $\RR_1 \preccurlyeq \RR'$ and $\RR_2 \leqslant \RR'$ for some $\RR'$. Suppose that there exists some $n$ such that for all $m \geqslant n$, $y_m \, \cancel{\RR'} \, y_\infty$. Then we have $y_n \,\mathcal{L}' \,y_\infty$ and $x_n \, \RR_1 \, y_n$, and since $\RR_1 \preccurlyeq \RR'$, we must have $x_n \, \mathcal{L}' \,z\, \RR_1 \,y_\infty$. However, the only $z$ with $z \,\RR_1 \,y_\infty$ is $z=y_\infty$. On the other hand, since $\RR_2 \leqslant \RR'$, we have a square
\[
\xymatrix{
x_n \ar[r] \ar[d]& x_\infty \ar@{->>}[d]^{\RR'} \\
y_\infty \ar[r] & y_\infty
}
\]
which clearly admits no lift, and hence $x_n \, \cancel{\mathcal{L}'} \, y_\infty$, which is a contradiction.

Conversely, let $(n) = (n_1, n_2, \dots)$ be any increasing infinite sequence, and define $\RR_{(n)}$ to be the smallest transfer system $\RR'$ containing $\RR_2$ such that $y_{n_k} \, \RR' \,y_\infty$ for all $k$. Specifically, we have $x_m \,\RR_{(n)}\, y_m$ for all $m$ (including $m=\infty$), and for all $k$ and all $m \geqslant n_k$ (including $m = \infty$) we have $x_{n_k} \,\RR_{(n)} \,x_m$, $x_{n_k} \,\RR_{(n)} \,y_m$ and $y_{n_k} \,\RR_{(n)} \,y_m$. These are the only relations in $\RR_{(n)}$.

We claim that $\RR_1 \preccurlyeq \RR_{(n)}$ and $\RR_2 \preccurlyeq \RR_{(n)}$ for any such $\RR_{(n)}$ constructed this way. Indeed, for any such $\RR_{(n)}$, one can explicitly determine $\mathcal{L}_{(n)}$: namely we have $x_n \, \mathcal{L}_{(n)} \, x_m$ and $y_n \, \mathcal{L}_{(n)}\, y_m$ if and only if $n_k < n \leqslant m \leqslant y_{k+1}$ for some $k$, and no other arrows exist in $\mathcal{L}_{(n)}$. With this explicit description, one is able to verify that $\RR_1 \circ \mathcal{L}_{(n)}$ and $\RR_2 \circ \mathcal{L}_{(n)}$ are both closed under composition. 

Now we know that if $\RR_1 \preccurlyeq \RR'$ and $\RR_2 \preccurlyeq \RR'$ then $\RR'$ must contain some $\RR_{(n)}$, so in particular if the join $\RR_1 \vee \RR_2$ exists then it must be of the form $\RR_{(n)}$ for some sequence $(n) = (n_1, n_2, \dots)$ by minimality. But then if we define $(n')$ to be the same sequence with $n_1$ removed, we have $\RR_{(n')} < \RR_{(n)}$ and $\RR_1, \RR_2 \preccurlyeq \RR_{(n')}$, contradicting minimality. Hence, the element $\RR_1 \vee \RR_2$ does not exist.
\end{remark}

\begin{remark}
Recall that in \autoref{lem:premodelislat} we observed that the collection of premodel structures and right Quillen functors between them formed a complete lattice. In particular, any limit or colimit of premodel structures on a fixed finite lattice $L$ along right Quillen functors exists.

We have proved in~\autoref{thm:islattice} that the collection of composition closed model structures also forms a complete lattice. However, it does not follow that any limit or colimit of composition closed premodel structures on a fixed finite lattice $L$ along right Quillen functors exists. This is due to the fact that for two composition closed premodel structures $C_1 = (\RR_1 \preccurlyeq \RR_1')$  and $C_2 = (\RR_2 \preccurlyeq \RR_2')$  we have $C_1 \leqslant_{\mathbb{I}(\mathsf{Tr}(L),\preccurlyeq)} C_2$ if and only if $\RR_1 \preccurlyeq \RR_2$ and $\RR_1' \preccurlyeq \RR_2'$. In particular, this is the data of a right Quillen functor satisfying an additional condition. Let us locally call such a functor a \emph{restricted right Quillen functor}.

We have not been able to find a known description of what it means for a right Quillen functor --- even between model categories --- to be restricted right Quillen. Take for example the following model structures on the lattice $[2]$ and the right Quillen functor between them:
\[
\begin{gathered}
\begin{tikzpicture}
\node (pol) [draw=none, minimum size=2.5cm, regular polygon, regular polygon sides=3] at (0,0) {};
\foreach \n [count=\nu from 0, remember=\n as \lastn, evaluate={\nu+\lastn}] in {1,2,...,3}
\node[anchor=\n*(360/3)-215] at (pol.corner \n){$\nu$};
\foreach \n in {1,2,...,3}
\draw[fill = black] (pol.corner \n) circle (2pt);

\draw[->>, shorten >=3mm, shorten <=3mm](pol.corner 1) -- (pol.corner 2) node[midway,sloped,above]{$\sim$};

\draw[right hook->, shorten >=3mm, shorten <=3mm](pol.corner 1) -- (pol.corner 3) node[midway,sloped,above]{$\sim$};

\draw[right hook->, shorten >=3mm, shorten <=3mm](pol.corner 2) -- (pol.corner 3) node[midway,sloped,below]{$\sim$};
\node at (0,-0.0) {\circled{$C_1$}};

\end{tikzpicture}
\end{gathered}
\quad
\xrightarrow{\hspace{0.5cm}\mathrm{id}\hspace{0.5cm}}
\quad
\begin{gathered}
\begin{tikzpicture}
\node (pol) [draw=none, minimum size=2.5cm, regular polygon, regular polygon sides=3] at (0,0) {};
\foreach \n [count=\nu from 0, remember=\n as \lastn, evaluate={\nu+\lastn}] in {1,2,...,3}
\node[anchor=\n*(360/3)-215] at (pol.corner \n){$\nu$};
\foreach \n in {1,2,...,3}
\draw[fill = black] (pol.corner \n) circle (2pt);

\draw[->>, shorten >=3mm, shorten <=3mm](pol.corner 1) -- (pol.corner 2) node[midway,sloped,above]{$\sim$};

\draw[->>, shorten >=3mm, shorten <=3mm](pol.corner 1) -- (pol.corner 3) node[midway,sloped,above]{$\sim$};

\draw[right hook->, shorten >=3mm, shorten <=3mm](pol.corner 2) -- (pol.corner 3) node[midway,sloped,below]{$\sim$};
\node at (0,-0.0) {\circled{$C_2$}};
\node at (1.6,-0.9) {.};
\end{tikzpicture}
\end{gathered}
\]
Then this functor is not restricted right Quillen, even though it has the strong property of preserving weak equivalences. We can see the failure of this to be restricted right Quillen using \autoref{lem:firstsplit}. In particular, Setting $x=z=0$, $y=1$, and $w=2$, we have the square
\[
\xymatrix{
0 \ar@{=}[r]  \ar@{->>}[d]_{\RR} & 0 \ar@{->>}[d]^{\RR'} \\
1  \ar@{^(->}[r]_{\mathcal{L}'} & 2
}
\]
which does not admit a splitting
\[
\xymatrix{
0 \ar@{=}[r]  \ar@{->>}[d]_{\RR} & 0 \ar@{..>}[dr]|{\RR} \ar@{=}[r] & 0 \ar@{->>}[d]^{\RR'} \\
1  \ar@{^(->}[rr]_{\mathcal{L}'} && 2
}
\]
as $0$ is not fibrant in $C_1$.
\end{remark}

\subsection{A refined ordering for model structures}\label{sec:refinedformodel}

So far we have seen that premodel structures  (resp., composition closed premodel structures) on a finite lattice $L$ are in bijection with intervals of the lattice $(\mathsf{Tr}(L),\leqslant)$ (resp., intervals of the lattice $(\mathsf{Tr}(L),\preccurlyeq)$). One may hope for a similar result to detect the model structures. Here we will show that such a refined ordering on $\mathsf{Tr}(L)$ does exist, but the resulting poset is not a lattice. The proof proceeds as in \autoref{prop:transitiverel} where we show that the desired relation is transitive.

\begin{prop}
Let $\RR \leqslant \RR' \leqslant \RR''$ be transfer systems on an arbitrary lattice such that $(\RR, \RR')$ and $(\RR', \RR'')$ are model structures. Then $(\RR, \RR'')$ is a model structure.
\end{prop}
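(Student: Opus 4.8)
The plan is to reduce the statement to the single identity $\mathcal{W}'' = \mathcal{W}\cap\mathcal{W}'$ of classes of morphisms, where I abbreviate $\mathcal{W} = \RR\circ\LL'$, $\mathcal{W}' = \RR'\circ\LL''$, and $\mathcal{W}'' = \RR\circ\LL''$ for the weak equivalences of the premodel structures $(\RR,\RR')$, $(\RR',\RR'')$, and $(\RR,\RR'')$ respectively (here $\LL = {}^\boxslash\RR$ and so on, and $\RR\leqslant\RR'\leqslant\RR''$ gives $\LL''\subseteq\LL'\subseteq\LL$). Granting this identity, the conclusion is immediate: an intersection of two classes each closed under 2-out-of-3 is again closed under 2-out-of-3, so $\mathcal{W}''$ is, and hence $(\RR,\RR'')$ is a model structure. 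One inclusion is purely formal and uses neither hypothesis: from $\LL''\subseteq\LL'$ we get $\RR\circ\LL''\subseteq\RR\circ\LL' = \mathcal{W}$, and from $\RR\subseteq\RR'$ we get $\RR\circ\LL''\subseteq\RR'\circ\LL'' = \mathcal{W}'$, so $\mathcal{W}''\subseteq\mathcal{W}\cap\mathcal{W}'$.

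For the reverse inclusion I would first isolate an elementary fact about an arbitrary premodel structure $(\RR,\RR')$: namely $\RR'\cap(\RR\circ\LL') = \RR$. The inclusion $\supseteq$ holds since every identity lies in $\LL'$; conversely, if $p\in\RR'$ factors as $p = r\circ i$ with $i\in\LL'$ and $r\in\RR$, then $i\boxslash p$ provides a lift, and the usual retract argument gives $p\in\RR$ (on a lattice this simply says the lift forces the domain and codomain of $i$ to agree, so $p = r\in\RR$). Now take $f\in\mathcal{W}\cap\mathcal{W}'$. Since $f\in\mathcal{W}' = \RR'\circ\LL''$, write $f = q\circ j$ with $j\in\LL''$ and $q\in\RR'$. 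Then $j\in\LL''\subseteq\LL'\subseteq\mathcal{W}$, and $q\circ j = f\in\mathcal{W}$; since $j$ and $q\circ j$ lie in $\mathcal{W}$, 2-out-of-3 for the model structure $(\RR,\RR')$ forces $q\in\mathcal{W}$. Hence $q\in\RR'\cap\mathcal{W} = \RR$ by the fact just proved, and therefore $f = q\circ j\in\RR\circ\LL'' = \mathcal{W}''$.

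The heart of the matter is the inclusion $\mathcal{W}\cap\mathcal{W}'\subseteq\mathcal{W}''$, and I expect it to be the only real obstacle: it is the one place where the 2-out-of-3 hypothesis on $(\RR,\RR')$ is invoked, the hypothesis on $(\RR',\RR'')$ entering only in the final, formal intersection step. In the more diagrammatic style of \autoref{prop:transitiverel} one can instead argue directly: given $f$ presented both as $x\xrightarrow{\LL'}a\xrightarrow{\RR}z$ and as $x\xrightarrow{\LL''}b\xrightarrow{\RR'}z$, a lift in the square with left edge $x\to b\in\LL''$ and right edge $a\to z\in\RR$ forces $b\leqslant a$; factoring $b\to a$ through $(\LL',\RR')$ and taking a second lift (using $\LL''\subseteq\LL'$ to place the relevant composite in $\LL'$) collapses this factorization, so $b\to a\in\LL'$, whence $b\to z\in\RR\circ\LL' = \mathcal{W}$; then $b\to z\in\RR'\cap\mathcal{W} = \RR$ as before, and $f = x\xrightarrow{\LL''}b\xrightarrow{\RR}z\in\mathcal{W}''$. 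Either way, the only thing to be careful about is the bookkeeping of which of $\RR,\RR',\RR''$ (equivalently $\LL,\LL',\LL''$) each arrow belongs to — exactly the care that makes \autoref{prop:transitiverel} work — and no genuinely new difficulty appears.
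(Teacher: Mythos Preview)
Your proof is correct, and it takes a genuinely different route from the paper's. The paper verifies 2-out-of-3 for $\mathcal{W}'' = \RR\circ\LL''$ directly: given $x\to y$ and $x\to z$ in $\mathcal{W}''$ with $z\leqslant y$, it uses the model structure $(\RR,\RR')$ to produce $u$ with $z\,\LL'\,u\,\RR\,y$, uses the model structure $(\RR',\RR'')$ to produce $v$ with $z\,\LL''\,v\,\RR'\,y$, and then runs two lifting arguments to force $u=v$, yielding the required $(\LL'',\RR)$-factorization of $z\to y$. Your argument instead isolates the identity $\mathcal{W}'' = \mathcal{W}\cap\mathcal{W}'$ and deduces 2-out-of-3 for $\mathcal{W}''$ formally as an intersection of two 2-out-of-3 classes.

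What each approach buys: the paper's argument is a single self-contained diagram chase using both hypotheses symmetrically. Yours is more structural and separates concerns cleanly --- the lemma $\RR'\cap\mathcal{W} = \RR$ is a general fact about any premodel pair, and the identity $\mathcal{W}'' = \mathcal{W}\cap\mathcal{W}'$ is an interesting statement in its own right. In fact your second, diagrammatic variant shows something slightly stronger than you advertise: the inclusion $\mathcal{W}\cap\mathcal{W}'\subseteq\mathcal{W}''$ (and hence the identity $\mathcal{W}'' = \mathcal{W}\cap\mathcal{W}'$) holds for \emph{any} triple $\RR\leqslant\RR'\leqslant\RR''$ of weak factorization systems on a lattice, with no 2-out-of-3 hypothesis at all --- the only lifts used are $\LL''\boxslash\RR$ and $\LL'\boxslash\RR'$, and the retract/antisymmetry argument for $\RR'\cap\mathcal{W}=\RR$. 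The two model-structure hypotheses then enter only at the very last step. This is a nice sharpening that the paper's direct argument does not make visible.
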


\begin{proof}
We have by \autoref{prop:transitiverel} that $\RR \preccurlyeq \RR''$, so it suffices to show that if $f \circ g \in \RR \circ \LL''$ then $f \in \RR \circ \LL''$ if and only if $g \in \RR \circ \LL''$. By duality it suffices to show that $g \in \RR \circ \LL''$ implies that $f \in \RR \circ \LL''$. To this end, suppose that $ x \to y \in \RR \circ \LL''$, and $x \to z \in \RR \circ \LL''$, with $z \leqslant y$, we wish to show that $z \to y \in \RR \circ \LL''$.

Since $\LL'' \subseteq \LL'$, we know that $x \to y$ and $x \to z$ are in $\LL' \circ \RR$, so since $(\RR, \RR')$ forms a model structure, we have $z \, \LL' \, u \, \RR \,y$ for some $u$. Similarly using that $\RR \subseteq \RR'$ and $(\RR', \RR'')$ forms a model structure, we can find some $v$ such that $z \,\LL'' \,v\, \RR' \,y$.

By pullback closure, we have $(u {\wedge} v) \,\RR' \,u$, and since $z \,\LL'\, u$ and $z \leqslant u {\wedge} v$,  we have a lift $u \leqslant u {\wedge} v$, which implies that $u \leqslant v$. Similarly we have $(u {\wedge} v) \,\RR\, v$ and $z \,\LL'' \,v$, and since $\LL'' \subseteq \LL$, the same argument forces $v \leqslant u$. Thus $z \,\LL'' \,(v{=}u) \,\RR \,y$ as required. 
\end{proof}

\begin{corollary}
There exists a partial ordering $\sqsubseteq$ on the set of transfer systems of an arbitrary lattice which refines $\preccurlyeq$ such that $\RR \sqsubseteq \RR'$ if and only if the pair forms a model structure.
\end{corollary}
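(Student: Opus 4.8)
The plan is to follow the template used to construct $\preccurlyeq$ from \autoref{prop:transitiverel}: to produce $\sqsubseteq$ it suffices to verify that the relation ``$\RR \leqslant \RR'$ and $(\RR,\RR')$ is a model structure'' is reflexive, antisymmetric, and transitive, and then, separately, that it refines $\preccurlyeq$.

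Transitivity is exactly the proposition just proved, so there is nothing new to do there. For reflexivity, observe that the premodel structure $(\RR \leqslant \RR)$ has weak equivalences $\mathcal{W} = \RR \circ {}^{\boxslash}\RR$; by the factorization axiom for the weak factorization system $({}^{\boxslash}\RR,\RR)$ every morphism of $L$ factors as a map in ${}^{\boxslash}\RR$ followed by a map in $\RR$, so $\mathcal{W} = \mathrm{Mor}(L)$, and the class of all morphisms trivially satisfies two-out-of-three; hence $(\RR,\RR)$ is a model structure. For antisymmetry, if both $(\RR,\RR')$ and $(\RR',\RR)$ are model structures then in particular both are premodel structures, which forces $\RR \leqslant \RR'$ and $\RR' \leqslant \RR$, whence $\RR = \RR'$.

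Finally, $\sqsubseteq$ refines $\preccurlyeq$: if $(\RR,\RR')$ is a model structure then its weak equivalences $\RR \circ \mathcal{L}'$ satisfy two-out-of-three and are therefore in particular closed under composition, which is precisely the defining condition for $\RR \preccurlyeq \RR'$. Combined with the fact that $\preccurlyeq$ refines $\leqslant$, this gives the required tower of refinements.

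There is no genuine obstacle in this argument: all of the substance lives in the transitivity proposition above, and the remaining verifications are immediate from the axioms. The one point worth keeping in mind — though it plays no role in the corollary itself — is that $\sqsubseteq$ is a \emph{strict} refinement of $\preccurlyeq$, since $(\mathsf{Tr}(L),\sqsubseteq)$ is in general not a lattice, in contrast with $(\mathsf{Tr}(L),\preccurlyeq)$ of \autoref{thm:islattice}.
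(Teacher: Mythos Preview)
Your proposal is correct and follows exactly the template the paper uses for the analogous corollary about $\preccurlyeq$: transitivity is the preceding proposition, and reflexivity and antisymmetry are immediate (in fact the paper gives no proof at all for this corollary, leaving these verifications implicit). Your only addition is to spell out explicitly why $(\RR,\RR)$ is a model structure and why $\sqsubseteq$ refines $\preccurlyeq$, both of which are routine and handled correctly.
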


\begin{remark}
Although we now have a partial ordering $\sqsubseteq$ whose intervals detect model structures, the resulting object is not a lattice. This should not be surprising as we know that the collection of model structures even on a fixed underlying category does not admit all homotopy limits and colimits.

Let us provide an example to show that $(\mathsf{Tr}(L), \sqsubseteq)$ fails to be a lattice even in a relatively trivial case. Set $L=[2]$. Then $(\mathsf{Tr}([2]), \sqsubseteq)$ takes on the form
\begin{figure}[H]
    \centering \[
    \xymatrix@C=3.0em@R=1.5em{
    &\bullet \\
    &\bullet \\
    \bullet \ar[uur] & & \bullet \ar[uul]\\
    & \bullet \ar[uu] \ar[ul]
    }
    \]
\end{figure}
\noindent which is clearly not a lattice.
\end{remark}

\section{The case of finite total orders}\label{sec:caseoffinitetotal}

We will now restrict the theory from \autoref{sec:ccclosedaslattice} to the case where $L = [n]$, a finite total order. In~\cite{boor}, the premodel structures on $[n]$ were proved to be in bijection with the intervals of the Tamari lattice. Here we will show that the refined ordering $\preccurlyeq$ retrieves  a well-known refinement of the Tamari lattice, namely that of the \emph{Kreweras ordering}, which demonstrates the rich structures that one can possibly obtain when considering composition closed premodel structures.

\subsection{Noncrossing partitions and the Kreweras ordering}

\begin{defn}
A partition of the set $\{0,1, \dots n\}$ is said to be \emph{noncrossing} if for all $0 \leqslant a < b < c < d \leqslant n$ such that $a$ and $c$ are in the same block and $b$ and $d$ are in the same block, then $a,b,c$ and $d$ are all in the same block.
\end{defn}

There is a natural ordering on the collection $\mathsf{NC}_{n+1}$ of noncrossing partitions on the set $\{0,1 , \dots , n\}$ given by refinement of partitions. This ordering is called the \emph{Kreweras ordering} which results in a lattice (the \emph{Kreweras lattice})~\cite{kreweras}. The Tamari lattice is a strict extension of the Kreweras lattice (i.e., the Kreweras lattice is a refinement of the Tamari lattice).

\begin{example}
Let us consider the case of $[2]$. Then the Tamari and Kreweras lattices take the form
\begin{figure}[H]
    \centering \[
    \xymatrix@C=3.0em@R=1.5em{
    &\bullet \\
    &\bullet \ar[u] \\
    \bullet \ar[uur] & & \bullet \ar[ul]\\
    & \bullet \ar[ur] \ar[ul]
    }
\qquad \qquad \qquad
    \xymatrix@C=3.0em@R=1.5em{
    &\bullet \\
    &\bullet \ar[u] \\
    \bullet \ar[uur] & & \bullet \ar[uul]\\
    & \bullet \ar[ur] \ar[ul] \ar[uu]
    }
    \]
\end{figure}
\noindent respectively. We note that there are 13 intervals in the Tamari lattice, while there are only 12 in the Kreweras lattice.
\end{example}

We recall the bijection between noncrossing partitions and transfer systems from~\cite{fooqw}.

\begin{defn}
Let $\RR$ be a transfer system on $[n]$. We can define a non-decreasing function $\pi_\RR \colon [n] \to [n]$ by setting $\pi_\RR(i)$ to be the maximal $j$ such that $i \,\RR\, j$.
\end{defn}

\begin{prop}[{\cite[Theorem 5.7]{fooqw}}]\label{prop:ncistr}
Let $\RR$ be a transfer system on $[n]$, then the nonempty fibers of $\pi_\RR$ form a noncrossing partition of $[n]$. Moreover, this provides a bijection between sets of noncrossing partitions and transfer systems on $[n]$.
\end{prop}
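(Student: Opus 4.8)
The plan is to prove \autoref{prop:ncistr} by first establishing that the fibers of $\pi_\RR$ genuinely partition $[n]$ into intervals, then checking the noncrossing condition, and finally constructing an inverse to the assignment $\RR \mapsto \{\text{fibers of }\pi_\RR\}$. First I would observe that $\pi_\RR$ is well-defined: reflexivity of $\RR$ gives $i \mathbin{\RR} i$, so the set of $j$ with $i \mathbin{\RR} j$ is nonempty, and finiteness guarantees a maximum. Monotonicity of $\pi_\RR$ follows from the transfer condition: if $i \leqslant i'$ and $i' \mathbin{\RR} \pi_\RR(i')$, then since $\RR$ refines $\leqslant$ we also have $i \leqslant \pi_\RR(i')$ whenever $\pi_\RR(i') \geqslant i'$, but more carefully one uses that $i \mathbin{\RR} \pi_\RR(i)$ together with the restriction axiom to compare. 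The key structural fact is that each nonempty fiber $\pi_\RR^{-1}(k)$ is a contiguous block $\{a, a{+}1, \dots, b\}$ with $b = k$, i.e. $\pi_\RR$ is idempotent on its image and the partition is a partition into intervals; this uses that if $i \mathbin{\RR} j$ then $i \mathbin{\RR} \pi_\RR(i) \geqslant j$, and one shows $\pi_\RR(i) = \pi_\RR(j)$ by applying the restriction axiom with $z \leqslant \pi_\RR(i)$.

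Next I would verify the noncrossing property. Suppose $a < b < c < d$ with $a, c$ in the same block (so $\pi_\RR(a) = \pi_\RR(c) =: m \geqslant c > b$) and $b, d$ in the same block (so $\pi_\RR(b) = \pi_\RR(d) =: m' \geqslant d > c$). Since blocks are intervals and $a < b \leqslant c$ with $a, c$ in one block, $b$ lies in that same interval block, forcing $\pi_\RR(b) = m$; but then $d$ is in $b$'s block, which is the interval ending at $m$, so $d \leqslant m$, and combined with $\pi_\RR(a) = m \geqslant d$ we get all of $a,b,c,d$ in the block $[a', m]$. So the interval structure of the blocks makes the noncrossing condition essentially automatic — indeed any partition of a totally ordered set into intervals is noncrossing.

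For the bijection, given a noncrossing partition $P$ of $[n]$ whose blocks happen to be intervals (which, as just noted, all noncrossing partitions into intervals are — but crucially not every noncrossing partition consists of intervals, so the real content is that $\pi_\RR$'s fibers are always intervals, and conversely only interval partitions arise), I would need the precise statement from \cite{fooqw}: the claim is a bijection onto \emph{all} of $\mathsf{NC}_{n+1}$, which forces the inverse construction to recover non-interval blocks. Re-reading, the correct interpretation is that $\pi_\RR$ is defined via $[n]$ mapping to itself and the fibers, reindexed, give an arbitrary noncrossing partition — so I would instead define the inverse by: given $P \in \mathsf{NC}_{n+1}$, set $i \mathbin{\RR} j$ iff $i \leqslant j$ and the block of $j$ is contained in $\{0, 1, \dots, \max(\text{block of }i)\}$ appropriately, and verify directly that this is a transfer system using the noncrossing hypothesis, and that $\pi$ of it recovers $P$. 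The two constructions being mutually inverse is then a bookkeeping check.

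The main obstacle I anticipate is the inverse direction: showing that the transfer-system axiom (closure under pulling back along $z \leqslant y$) is equivalent to the noncrossing condition on the partition. This is where the combinatorial heart lies — the restriction axiom, read through $\pi_\RR$, says precisely that if the block of $y$ reaches up to some level and $z$ sits below $y$, then the block of $z \wedge z = z$ under the wedge (which in a total order is just $\min$) relates correctly; unwinding this into the crossing-free condition requires care about which element of each block is the "representative" and matching $\pi_\RR(i) = \max$-of-block. I would handle it by a case analysis on the relative positions of the blocks of $x$, $y$, $z$, leaning on \autoref{prop:ncistr}'s hypothesis that blocks are intervals to keep the cases finite, and then separately argue that the general (non-interval-block) noncrossing partitions are obtained because the \emph{singleton-refinement} of the fiber partition is what actually gets recorded — at which point I would double-check the statement against \cite[Theorem 5.7]{fooqw} to make sure the indexing convention matches before committing to the final writeup.
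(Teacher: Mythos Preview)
The paper does not give its own proof of this proposition; it is quoted as \cite[Theorem 5.7]{fooqw} and used as a black box, so there is nothing in the paper to compare against directly. Your attempt must therefore stand on its own, and it does not: the ``key structural fact'' you assert --- that each nonempty fiber $\pi_\RR^{-1}(k)$ is a contiguous block $\{a,a{+}1,\dots,k\}$ --- is false. Take the transfer system on $[3]$ generated by the single nontrivial relation $0\,\RR\,2$ (closing under restriction adds $0\,\RR\,1$, and nothing else). Then $\pi_\RR(0)=\pi_\RR(2)=2$ while $\pi_\RR(1)=1$, so the fiber over $2$ is $\{0,2\}$, not an interval. Your entire noncrossing verification (``Since blocks are intervals and $a<b\leqslant c$ with $a,c$ in one block, $b$ lies in that same interval block\dots'') rests on this false claim and therefore proves nothing. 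You half-notice the tension later (``crucially not every noncrossing partition consists of intervals''), but the attempted fix via ``reindexing'' is not an argument.

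The correct noncrossing check uses the restriction axiom directly, not any interval structure: given $a<b<c<d$ with $\pi_\RR(a)=\pi_\RR(c)$ and $\pi_\RR(b)=\pi_\RR(d)$, from $a\,\RR\,\pi_\RR(a)\geqslant c>b$ restriction gives $a\,\RR\,b$, whence transitivity yields $\pi_\RR(a)\geqslant\pi_\RR(b)$; the symmetric argument from $b\,\RR\,\pi_\RR(b)\geqslant d>c$ gives the reverse inequality, so all four share a block. For the inverse, observe first that on $[n]$ one always has $i\,\RR\,j$ if and only if $i\leqslant j\leqslant\pi_\RR(i)$ (restriction gives one direction, the definition of $\pi_\RR$ the other), so the transfer system is determined by $\pi_\RR$. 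Given a noncrossing partition $P$, let $m(i)$ denote the maximum of the block containing $i$ and set $i\,\RR\,j$ if and only if $i\leqslant j\leqslant m(i)$; reflexivity and restriction are immediate, and transitivity is exactly where the noncrossing hypothesis on $P$ is used. This is the missing concrete inverse you gesture at but never write down.
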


In~\cite{boor} it was proved that the lattice $(\mathsf{Tr}([n]), \leqslant)$ is isomorphic to the Tamari lattice. We can instead use the Kreweras ordering, pulling back along the bijection above. In particular we have the following.

\begin{lemma}
Let $\RR$ and $\RR'$ be transfer systems on $[n]$. Then $\RR \leqslant \RR'$ in the Kreweras lattice if and only if $\pi_\RR(i) = \pi_\RR(j)$ implies $\pi_{\RR'}(i) = \pi_{\RR'}(j)$ for all $i,j \in [n]$.
\end{lemma}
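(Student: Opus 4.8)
The plan is to run a direct definition chase through the bijection of \autoref{prop:ncistr}, transporting the refinement order on noncrossing partitions back to transfer systems. Write $P_\RR \in \mathsf{NC}_{n+1}$ for the noncrossing partition whose blocks are the nonempty fibers of $\pi_\RR$, so that $\RR \leftrightarrow P_\RR$ is the bijection of \autoref{prop:ncistr}. The first step is the tautological observation that, by the very construction of $P_\RR$, two elements $i,j \in [n]$ lie in a common block of $P_\RR$ if and only if they lie in a common fiber of $\pi_\RR$, i.e. if and only if $\pi_\RR(i) = \pi_\RR(j)$; of course the same holds verbatim for $\RR'$.

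The second step is to unwind the Kreweras order. By definition it is the refinement order on $\mathsf{NC}_{n+1}$: $P_\RR \leqslant P_{\RR'}$ precisely when every block of $P_\RR$ is contained in some block of $P_{\RR'}$. Before proceeding one should confirm this is the correct orientation of the convention, namely that the trivial transfer system — for which $\pi_\RR$ is the identity and hence $P_\RR$ is the partition into singletons — is the bottom element, matching the bottom element of $(\mathsf{Tr}([n]),\leqslant)$; this is a sanity check rather than a genuine obstacle. Now the condition ``every block of $P_\RR$ lies in a block of $P_{\RR'}$'' is equivalent, element by element, to the pointwise statement: whenever $i$ and $j$ lie in the same block of $P_\RR$, they also lie in the same block of $P_{\RR'}$.

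Combining the two steps yields the claim at once: $\RR \leqslant \RR'$ in the Kreweras lattice if and only if $\pi_\RR(i) = \pi_\RR(j)$ implies $\pi_{\RR'}(i) = \pi_{\RR'}(j)$ for all $i,j \in [n]$. There is no substantive obstacle here — all of the content sits in \autoref{prop:ncistr}, which already supplies the bijection $\RR \leftrightarrow P_\RR$ between transfer systems and noncrossing partitions, and the lemma is simply the transport of the refinement order across it. The only point requiring a moment's care is the bookkeeping of which partition is the ``finer'' one, which I would pin down once and for all via the bottom-element sanity check noted above.
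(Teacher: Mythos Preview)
Your proof is correct and is exactly the approach the paper takes: the lemma is stated without proof because it is an immediate unwinding of the refinement order on $\mathsf{NC}_{n+1}$ through the bijection of \autoref{prop:ncistr}, precisely as you describe. There is nothing to add.
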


For $\RR$ and $\RR'$ as in the above lemma, we will, for now, write $\RR \leqslant_K \RR'$ for the order relation in the Kreweras ordering. The main result of this section is to prove that $\leqslant_K \,\equiv\, \preccurlyeq$, that is, composition closed model structures on $[n]$ are in bijection with intervals in the Kreweras lattice.

\begin{theorem}\label{thm:main}
Let $\RR \leqslant \RR'$ be transfer systems on $[n]$. Then $\RR \preccurlyeq \RR'$ if and only if $\pi_{\RR} \leqslant \pi_{\RR'}$. That is, the ordering $\preccurlyeq$ is exactly the Kreweras ordering.
\end{theorem}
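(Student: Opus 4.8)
The plan is to translate everything into the language of the functions $\pi_{\RR}$ and $\pi_{\RR'}$, so write $\pi := \pi_{\RR}$ and $\pi' := \pi_{\RR'}$. The first step is to record the dictionary between a transfer system $\RR$ on $[n]$ and its function $\pi$. From the definition of $\pi$ together with the transfer axiom one obtains $i\,\RR\,j \Leftrightarrow i \leqslant j \leqslant \pi(i)$; hence $\pi$ is inflationary and idempotent, it satisfies the interval--closure property $i \leqslant j \leqslant \pi(i) \Rightarrow \pi(j) \leqslant \pi(i)$, and its fibers $\pi^{-1}(v)$ are the blocks of the associated noncrossing partition, with $\pi(i) = \max(\text{block of }i)$ (\autoref{prop:ncistr}). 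It is important to note that $\pi$ need \emph{not} be order-preserving, so ``$j$ lies in the block of $i$'' is strictly stronger than ``$j \leqslant \pi(i)$'' and the two must not be conflated. I would also record the two reformulations: $\RR \leqslant \RR'$ is equivalent to $\pi \leqslant \pi'$ pointwise, and the Kreweras relation $\RR \leqslant_K \RR'$ of the preceding lemma is equivalent to $\pi' \circ \pi = \pi'$ (equivalently: $\pi'$ is constant on every $\RR$-block), which forces $\pi \leqslant \pi'$ since $\pi'(i) = \pi'(\pi(i)) \geqslant \pi(i)$. Thus in both implications below we may assume $\RR \leqslant \RR'$; and we recall that $\LL' = {}^{\boxslash}\RR'$ is described concretely by: $x\,\LL'\,y$ if and only if $\pi'(p) < y$ for every $p \in [x,y)$.

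For the implication $\RR \preccurlyeq \RR' \Rightarrow \RR \leqslant_K \RR'$ I would prove $\pi' \circ \pi = \pi'$ directly. Fix $i$; if $\pi(i) = \pi'(i)$ then $\pi'(\pi(i)) = \pi'(\pi'(i)) = \pi'(i)$ by idempotency of $\pi'$, so assume $\pi(i) < \pi'(i)$ and write $v := \pi(i)$, $m := \pi'(i)$. I would apply \autoref{prop:onfinite} --- and here the version with \emph{no} cofibration among its hypotheses is essential, since the relevant bottom edge $v \to m$ is not in $\LL'$ --- to the commutative square whose top edge is the identity of $i$, left edge $i\,\RR\,v$, right edge $i\,\RR'\,m$, and bottom edge $v \leqslant m$. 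The resulting splitting yields $z' \in [i,i]$, hence $z' = i$, and $w' \in [v,m]$ with $i\,\RR\,w'$ and $w'\,\RR'\,m$; but $i\,\RR\,w'$ forces $w' \leqslant \pi(i) = v$, so $w' = v$, and then $w'\,\RR'\,m$ reads $v\,\RR'\,m$, i.e.\ $m \leqslant \pi'(v)$. On the other hand $i\,\RR'\,v$ (because $v \leqslant \pi'(i)$) together with the interval--closure property for $\RR'$ gives $\pi'(v) \leqslant \pi'(i) = m$. Hence $\pi'(v) = m$, i.e.\ $\pi'(\pi(i)) = \pi'(i)$, as wanted.

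For the converse $\RR \leqslant_K \RR' \Rightarrow \RR \preccurlyeq \RR'$ I would invoke \autoref{lem:composition} with $\mathcal{B} = \RR$ and $\mathcal{A} = \LL'$ (both contain the identities and are closed under composition), reducing the claim to $\LL' \circ \RR \subseteq \RR \circ \LL'$. So let $x\,\RR\,y$ and $y\,\LL'\,w$; it suffices to find $z'$ with $x\,\LL'\,z'$ and $w \leqslant \pi(z')$, for then $x\,\LL'\,z'\,\RR\,w$ exhibits the composite in $\RR \circ \LL'$. If $w \leqslant \pi(x)$, take $z' = x$. If $w > \pi(x)$, I claim $x\,\LL'\,w$ (whence $z' = w$ works). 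Let $p \in [x,w)$. If $p \geqslant y$ then $\pi'(p) < w$ directly from $y\,\LL'\,w$. If $p < y$ then $x \leqslant p \leqslant \pi(x)$, so $x\,\RR\,p$ and therefore $x\,\RR'\,p$; the interval--closure property for $\RR'$ gives $\pi'(p) \leqslant \pi'(x)$; and since $\pi(x)$ lies in the $\RR$-block of $x$, the hypothesis $\pi' \circ \pi = \pi'$ gives $\pi'(x) = \pi'(\pi(x))$, while $y \leqslant \pi(x) < w$ puts $\pi(x)$ into $[y,w)$, so $y\,\LL'\,w$ gives $\pi'(\pi(x)) < w$. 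Either way $\pi'(p) < w$, so $x\,\LL'\,w$.

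The opening dictionary and the implication $\preccurlyeq \Rightarrow \leqslant_K$ are essentially routine once one commits to reasoning with $\pi$ and its interval--closure property rather than with block membership (the latter being fragile precisely because $\pi$ is not monotone). I expect the converse to be the crux: in the case $w > \pi(x)$ one has to recognize that the hypothesis $\pi' \circ \pi = \pi'$ is exactly what licenses replacing $x$ by its block-maximum $\pi(x)$, which --- unlike $x$ itself --- sits inside the window $[y,w)$ on which the cofibration hypothesis $y\,\LL'\,w$ provides control. With that step in hand, both directions close, and $\preccurlyeq$ coincides with $\leqslant_K$ as claimed.
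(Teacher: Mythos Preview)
Your proof is correct. The forward implication $\preccurlyeq \Rightarrow \leqslant_K$ is essentially identical to the paper's argument: both apply \autoref{prop:onfinite} to the square $i\,\RR\,\pi_\RR(i)$, $i\,\RR'\,\pi_{\RR'}(i)$ and use maximality of $\pi_\RR(i)$ to force the middle column to collapse.

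The converse direction is where you diverge. The paper verifies the splitting criterion of \autoref{prop:onfinite} directly: given a square with $i\,\RR\,k$ and $j\,\RR'\,\ell$, it splits into the cases $j \geqslant k$ (trivial splitting through $j$) and $j < k$ (where pullback gives $i\,\RR\,j\,\RR'\,\ell$, and one splits through $\ell$ or through $\pi_\RR(i)$ depending on whether $\pi_\RR(i) \geqslant \ell$). You instead go back to \autoref{lem:composition} and reduce to $\LL'\circ\RR \subseteq \RR\circ\LL'$, using the explicit description $x\,\LL'\,y \Leftrightarrow (\forall p\in[x,y))\ \pi'(p)<y$. This buys you a cleaner case split on the single inequality $w \lessgtr \pi(x)$, and the Kreweras hypothesis $\pi'\circ\pi = \pi'$ enters in exactly one place --- to transport control from $x$ to $\pi(x)\in[y,w)$. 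The paper's approach has the virtue of never unpacking $\LL'$ and of staying at the diagrammatic level established in Section~3; yours has the virtue of making the role of the hypothesis more transparent and of being entirely self-contained once the $\pi$--dictionary is in hand. Either is a clean proof.
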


\begin{proof}
Observe first that $\pi_\RR\leqslant\pi_{\RR'}$ if and only if $\pi_{\RR'}(\pi_{\RR}(i)) = \pi_{\RR'}(i)$ for all $i$. Indeed, if $\pi_\RR\leqslant\pi_{\RR'}$ then by observing that we always have $\pi_\RR(i) = \pi_{\RR}(\pi_{\RR}(i))$ by transitivity of $\RR$, we obtain $\pi_{\RR'}(\pi_{\RR}(i)) = \pi_{\RR'}(i)$. Conversely it suffices to observe that $\pi_\RR(i) = \pi_\RR(j)$ implies
\[\pi_{\RR'}(i) = \pi_{\RR'}(\pi_{\RR}(i)) = \pi_{\RR'}(\pi_{\RR}(j)) = \pi_{\RR'}(j).\]

Now if $\RR\leqslant\RR'$ then for all $i$ we have a diagram
\[
\xymatrix{
	i \ar[r] \ar@{->>}[d]_{\RR}& i \ar@{->>}[d]^{\RR'} \\
	\pi_{\RR}(i) \ar[r] & \pi_{\RR'}(i).
}
\]
If $\RR\preccurlyeq\RR'$ then by \autoref{prop:onfinite} we obtain a splitting
\[
\xymatrix{
	i \ar[r] \ar@{->>}[d]_{\RR}& i\ar[r] \ar@{->>}[d]^{\RR} & i \ar@{->>}[d]^{\RR'} \\
	\pi_{\RR}(i) \ar[r] & j\ar@{->>}[r]_-{\RR'}& \pi_{\RR'}(i)
}
\]
but by maximality of $\pi_\RR(i)$ we must have $j = \pi_\RR(i)$ and hence $\pi_{\RR'}(\pi_\RR(i))\geqslant\pi_{\RR'}(i)$. But $i\, \RR\,\pi_{\RR}(i) \,\RR' \,\pi_{\RR'}(\pi_\RR(i))$, so the reverse inequality follows immediately.

Conversely assume $\pi_\RR\leqslant\pi_{\RR'}$ and suppose we're given a square
\[
\xymatrix{
	i \ar[r] \ar@{->>}[d]_{\RR}& j \ar@{->>}[d]^{\RR'} \\
	k \ar[r] & \ell.
}
\]
If $j\geqslant k$ then we obtain a trivial splitting
\[
\xymatrix{
	i \ar[r] \ar@{->>}[d]_{\RR}& j\ar[r] \ar@{->>}[d]_{\RR} & j \ar@{->>}[d]^{\RR'} \\
	k \ar[r] & j\ar@{->>}[r]_{\RR'} & \ell.
}
\]
Otherwise since $[n]$ is totally-ordered we have $i\leqslant j<k$ and hence by pullback-closure we have $i \, \RR \, j \,\RR' \,\ell$ and hence $\pi_{\RR'}(\pi_\RR(i)) = \pi_{\RR'}(i) \geqslant\ell$. If $\pi_\RR(i)\geqslant\ell$ then we obtain another trivial splitting
\[
\xymatrix{
	i \ar[r] \ar@{->>}[d]_{\RR}& i\ar[r] \ar@{->>}[d]_{\RR} & j \ar@{->>}[d]^{\RR'} \\
	k \ar[r] & \ell\ar@{->>}[r]_{\RR'} & \ell.
}
\]
Else, we obtain a non-trivial splitting
\[
\xymatrix{
	i \ar[r] \ar@{->>}[d]_{\RR}& i\ar[r] \ar@{->>}[d]_{\RR} & j \ar@{->>}[d]^{\RR'} \\
	k \ar[r] & \pi_\RR(i)\ar@{->>}[r]_{\RR'} & \ell
}
\]
and the result follows.
\end{proof}

\begin{corollary}\label{cor:thecount}
Let $n \geqslant 0$. Then
\[
|\mathsf{C}([n])| = \dfrac{1}{2n+3} \binom{3n+3}{n+1}.
\]
\end{corollary}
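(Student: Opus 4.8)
The plan is to deduce the count from \autoref{thm:main} together with the known enumeration of intervals in the Kreweras lattice; essentially all the substance has already been established, so this will be a short combinatorial book-keeping argument followed by a citation.

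First I would spell out the bijection. By \autoref{fooqwresult} a composition closed premodel structure on $[n]$ is the same as a pair $\RR \leqslant \RR'$ of transfer systems with $\RR \circ \LL'$ closed under composition, i.e.\ with $\RR \preccurlyeq \RR'$; since $\preccurlyeq$ refines $\leqslant$, such a pair is precisely an interval of the poset $(\mathsf{Tr}([n]), \preccurlyeq)$. Thus $\mathsf{C}([n])$ is in bijection with $\mathbb{I}(\mathsf{Tr}([n]), \preccurlyeq)$. By \autoref{thm:main} the poset $(\mathsf{Tr}([n]), \preccurlyeq)$ is isomorphic to the Kreweras lattice $\mathsf{NC}_{n+1}$ of noncrossing partitions of the $(n+1)$-element set $\{0,1,\dots,n\}$, so $\mathsf{C}([n])$ is in bijection with the set of pairs $P \leqslant_K Q$ of noncrossing partitions of $\{0,\dots,n\}$, i.e.\ with the intervals of the Kreweras lattice of order $n+1$.

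Second I would invoke the interval count for the Kreweras lattice: Bernardi--Bonichon~\cite{intervals} prove that the number of intervals of the Kreweras lattice on an $m$-element set is $\tfrac{1}{2m+1}\binom{3m}{m}$. Taking $m = n+1$ yields
\[
|\mathsf{C}([n])| \;=\; \frac{1}{2(n+1)+1}\binom{3(n+1)}{n+1} \;=\; \frac{1}{2n+3}\binom{3n+3}{n+1},
\]
as claimed.

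I do not expect a genuine obstacle here, since the real work is \autoref{thm:main}, which is already proved, and the enumeration is a black box from~\cite{intervals}. If one wanted a self-contained derivation instead, the one nontrivial step would be re-proving the Kreweras interval formula — most efficiently via Lagrange inversion applied to the functional equation satisfied by the generating series of Kreweras intervals, or via the bijective route through ternary trees / realizers of triangulations from~\cite{intervals} (the latter is in the same circle of ideas revisited in \autoref{sec:stacks}). As a consistency check I would confirm the small cases $n = 0,1,2$, for which the formula gives $1$, $3$, $12$, matching the Kreweras lattices of orders $1$, $2$, $3$ displayed above.
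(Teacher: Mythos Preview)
Your proposal is correct and matches the paper's own proof essentially verbatim: invoke \autoref{thm:main} to identify $\mathsf{C}([n])$ with Kreweras intervals, then cite the known closed-form count. The only cosmetic difference is that the paper attributes the interval enumeration to Kreweras~\cite{kreweras} rather than to Bernardi--Bonichon~\cite{intervals}.
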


\begin{proof}
By \autoref{thm:main} we have $|\mathsf{C}([n])|$ is the number of intervals in the Kreweras lattice of noncrossing partitions on $[n]$. The result then follows from the computation of intervals from~\cite{kreweras}.
\end{proof}

\begin{remark}\label{rem:compatible}
We return to the remark in the introduction regarding compatible pairs of $N_\infty$ operads as introduced in~\cite{MR4327103}. In essence, a compatible pair of $N_\infty$ operads is a pair $(\mathcal{O}_1, \mathcal{O}_2)$ of $N_\infty$ operads satisfying  $\mathcal{O}_1 \leqslant \mathcal{O}_2$  in addition to a condition regarding coinduction of groups. In~\cite{chan} this condition was passed through the equivalence to transfer systems to give a more checkable condition which we now recall (in a purely categorical fashion).  Let $L$ be a finite lattice. A pair of transfer systems $(\RR,\RR')$ on $L$ are \emph{compatible} if:
\begin{enumerate}
    \item $\RR \leqslant \RR'$;
    \item For all $x \in L$ if $y,z \leqslant x$ are such that $y \, \RR \, x$ and $(y {\wedge} z) \, \RR' \, y$, then $z \RR' x$.
\end{enumerate}
Let us write $\mathsf{Com}(L)$ for the collection of compatible pairs for a fixed lattice $L$. One immediately sees that the pair $(\RR , \RR)$ need not be compatible, and, in particular, the collection of premodel structures and compatible pairs on a lattice cannot be the same. However, in the case of $[n]$, Hill, Meng, and Li \cite{compatible} have computed
\[
|\mathsf{Com}([n])| = \dfrac{1}{2n+3} \binom{3n+3}{n+1}.
\]
That is, $|\mathsf{Com}([n])| = |\mathsf{C}([n])|$. This suggests the existence of an explicit bijection between composition closed premodel structures on $[n]$ and the compatible pairs of transfer systems. We warn, however, that for lattices other than $[n]$, computational evidence suggests that there are always more compatible pairs than composition closed premodel structures.
\end{remark}

\subsection{Enumeration and asymptotics}

Using \autoref{thm:main} along with the results of~\cite{boor} we can provide numerical results regarding the density of composition closed premodel structures among premodel structures and model structures among composition closed premodel structures. 

We remind the reader that we write $\mathsf{P}([n])$ (resp., $\mathsf{C}([n])$, $\mathsf{Q}([n])$) for the collection of premodel structures (resp., composition closed premodel structures, model structures) on $[n]$.

We recall from~\cite{boor} that
\[
|\mathsf{P}([n])| = \dfrac{2}{(n+1)(n+2)}\binom{4n+5}{n}
\]
which follows from the count of intervals in the Tamari lattice achieved by~\cite{chapoton}, and
\[
|\mathsf{Q}([n])| = \binom{2n+1}{n}.
\]
In~\cite{boor} it is further shown that
\[
\dfrac{|\mathsf{Q}([n])|}{|\mathsf{P}([n])|} \sim c 2^{dn} n ^2.\]
where
\begin{align*}
c=& \dfrac{243 \sqrt{3/2}}{1024} \approx 0.290638, \\
d=& 3 \log_2(3) - 6 \approx -1.24511.
\end{align*}

From \autoref{cor:thecount} we have
\[
|\mathsf{C}([n])| = \dfrac{1}{2n+3} \binom{3n+3}{n+1}.
\]
By Stirling's approximation we have
\[
\binom{n}{k} \sim \sqrt{\dfrac{n}{2 \pi k (n-k)}} \cdot \dfrac{n^n}{k^k(n-k)^{n-k}}
\]
for both $k$ and $n$ large. Applying this approximation to $\mathsf{Q}([n])$ and $\mathsf{C}([n])$ yields
\begin{align*}
\dfrac{|\mathsf{Q}([n])|}{|\mathsf{C}([n])|} \sim & \dfrac{4 \sqrt{\pi + \pi n}\left(4 \sqrt{2}n^2 + 8\sqrt{2}n + 3 \sqrt{2}\right)(2n+2)^{2n} (2n+1)^{2n} \sqrt{\dfrac{2n+1}{\pi n^2 + \pi n}}}{27\left(\sqrt{3}n + \sqrt{3}\right)(3n+3)^{3n} n^n} \\[10pt]
\sim& \dfrac{32 \sqrt{3} \sqrt{\pi n} (2n)^{4n} n \sqrt{\dfrac{1}{\pi n}}}{81 (3n)^{3n} n^n} \\[10pt]
=& \dfrac{32 \sqrt{3}}{81} \left( \dfrac{16}{27} \right)^n n.
\end{align*}
In other words we have
\[
\dfrac{|\mathsf{Q}([n])|}{|\mathsf{C}([n])|} \sim c' 2^{d' n} n
\]
where
\begin{align*}
c' = & \dfrac{32 \sqrt{3}}{81} \approx 0.6842670, \\
d' = & 4 - 3 \log_2(3) \approx - 0.754888.
\end{align*}
As such, we see that $\frac{|\mathsf{Q}([n])|}{|\mathsf{C}([n])|}$ is asymptotically exponential decay times a linear term, and therefore approaches 0 swiftly for large $n$.

We can now solve for
\[
\dfrac{|\mathsf{C}([n])|}{|\mathsf{P}([n])|} \sim c'' 2^{d'' n} n
\]
where
\begin{align*}
c'' = & \dfrac{c}{c'} = \dfrac{19683}{65536}\sqrt{2} \approx 0.424743, \\
d'' = & d-d' =  6 \log_2(3) - 10 \approx - 0.490255.
\end{align*}
In conclusion, we see that composition closed premodel structures are rare among premodel structures, and moreover model structures are rare among composition closed premodel structures.

\section{Application: Model structures on $[n]$ as tricolored trees}\label{sec:stacks}

In the previous section we produced a bijection between composition closed premodel structures on $[n]$ and Kreweras intervals. In this section we will relate composition closed premodel structures to tricolored trees using the work~\cite{intervals}. Following this we will identify which of these trees correspond to model structures on $[n]$. Let us begin by introducing our trees of interest:

\begin{defn}
 A \emph{(tri)colored tree} is a rooted planar tree such each each node may have edges coming from the set $\{blue,green,red\}$. We canonically order the branches coming out of a node so that blue is to the left of green and green is to the left of red. 
\end{defn}

\begin{remark}
Tricolored trees  are referred to as \emph{realizers of triangulations which are both minimal and maximal} in~\cite{intervals}. Further, in \cite{intervals} these trees are shown to be in bijection with \emph{stacked triangulations}. We have chosen instead to work with trees as it is the more natural approach to the results that we intend to prove.
\end{remark}

\begin{remark}\label{rem:tricoltrees}
If for each missing branch color we add a leaf and then forget all the colors, then we establish a bijection between tricolored trees and (uncolored) ternary trees. As such we can see the construction of tricolored trees as being iterative. Indeed a tricolored tree is either the empty tree, or a root with an ordered list of three (possibly empty) trees. We will return to this observation later.
\end{remark}

The goal of this section is to prove the following simple characterization in terms of colored trees of when a Kreweras pair defines a model structure.
	
	\begin{theorem}\label{mainthm}
		There is a bijection between colored trees and Kreweras pairs (to be explicitly constructed momentarily), under which a colored tree represents a model structure if and only if it does not have a red branch descended from any non-red branch.
	\end{theorem}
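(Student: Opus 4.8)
The argument naturally splits into two parts: constructing the explicit bijection $\varphi$ between colored trees and Kreweras pairs, and then identifying which colored trees $\varphi$ sends to model structures.

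For the bijection, the plan is to define $\varphi$ by recursion on the root of a colored tree. A colored tree is a root carrying an optional blue subtree, an optional green subtree, and an optional red subtree, each again a colored tree; the recursion matches this decomposition against the standard recursive description of a Kreweras pair $(\RR,\RR')$ on $[n]$ --- the noncrossing partition of $\RR$ decomposed along the block of a distinguished element, together with the noncrossing coarsening recording $\RR'$. Concretely, to a colored tree with $n+1$ internal nodes we assign a pair of transfer systems on $[n]$, carrying along the functions $\pi_\RR$ and $\pi_{\RR'}$ as we descend and verifying at each stage that $\pi_\RR\leqslant\pi_{\RR'}$, so that $(\RR,\RR')$ is a genuine Kreweras pair by \autoref{thm:main}. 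Since the colored trees, the stacked triangulations, and the ternary trees of the relevant size are all equinumerous with $\mathsf{C}([n])$ by \autoref{cor:thecount}, it then suffices to check that $\varphi$ is injective (equivalently surjective), which the recursion makes routine. Independently, the composite ``colored tree $\leftrightarrow$ stacked triangulation $\leftrightarrow$ Kreweras interval'', in which the last step is the bijection of Bernardi--Bonichon~\cite{intervals} between Kreweras intervals and the triangulations whose minimal realizer is also maximal, can be used to cross-check $\varphi$ and to transport the final criterion to the triangulation picture.

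With $\varphi$ in place, we translate the model-structure condition. A composition closed premodel structure $(\RR,\RR')$ is a model structure precisely when the two cancellation laws for $\mathcal W=\RR\circ\LL'$ hold (closure under composition, the remaining instance of two-out-of-three, being already assumed); on the totally ordered $[n]$ these say that for $x\leqslant z\leqslant y$, if $x\to y$ lies in $\mathcal W$ together with either $x\to z$ or $z\to y$, then the remaining arrow also lies in $\mathcal W$. The computational heart is an explicit description of $\mathcal W$: one checks that $(a\to b)\in\LL'$ if and only if $\pi_{\RR'}(c)<b$ for all $c\in[a,b)$, whence $x\to y\in\mathcal W$ if and only if there is some $w\in[x,y]$ with $(x\to w)\in\LL'$ and $y\leqslant\pi_\RR(w)$. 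Running this through $\varphi$ converts membership in $\mathcal W$ into a condition on the colored tree, and one then shows that the cancellation laws fail exactly when some red branch is descended from a non-red branch: such a configuration yields a concrete triple $x\leqslant z\leqslant y$, read off from the nodes bordering that red branch, violating one of the two laws, whereas if no red branch sits below a non-red branch then --- since each node has at most one outgoing branch of each color --- the red branches form a single path out of the root with red-free (equivalently, binary) subtrees dangling off it, and in this configuration both cancellation laws can be verified by induction along the red path. As a sanity check, a generating-function computation confirms that the number of such trees equals $\binom{2n+1}{n}=|\mathsf{Q}([n])|$.

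The principal difficulty is the first part: writing $\varphi$ down explicitly and proving bijectivity. The delicate point is pinning down the asymmetric role of the red color --- matching the ``red subtree'' with precisely the data recording how $\RR'$ strictly coarsens $\RR$ --- and reconciling the recursion with the realizer conventions of~\cite{intervals}. Once $\varphi$ and the formula for $\mathcal W$ are fixed, the model-structure criterion becomes a finite case analysis organized by the recursion.
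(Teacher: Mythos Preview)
Your plan is plausible but takes a genuinely different route from the paper on both halves. For the bijection, the paper does \emph{not} recurse on the root decomposition; instead it equips the tree with a canonical left-to-right ``admissible ordering'' of its nodes, then defines $\pi_{\RR'}(x)$ as the maximum of the red--green component of $x$ and $\pi_\RR(x)$ as the rightmost descendant along the green child of $x$. Bijectivity is proved by explicitly reconstructing first the green edges, then the red edges, then the blue edges from $(\RR,\RR')$. Your recursive approach should also work, but the ``standard recursive description of a Kreweras pair'' you invoke is not actually standard and would have to be built from scratch; the paper's direct formulas for $\pi_\RR,\pi_{\RR'}$ make the later model-structure analysis cleaner. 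For the criterion, rather than computing $\mathcal W$ globally and running a case analysis, the paper splits the forbidden configuration into ``red above blue'' and ``red above green'' and handles them by two short dual propositions: red-above-blue is shown equivalent to the existence of $x$ with $\pi_\RR(x)<\pi_{\RR'}(x)<n$, which in turn is equivalent to failure of one cancellation law; red-above-green is handled by the dual function $\theta_\RR(z)=\max\{j<z\mid j\,\RR\,z\}+1$ and the other cancellation law. This symmetry-breaking buys a proof that is essentially two half-page lemmas, whereas your induction along the red path with dangling binary subtrees is correct in outline (and matches the paper's later \autoref{weak}) but would need more bookkeeping to turn into an argument of comparable length.
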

		
	We first construct a bijection between colored trees and Kreweras pairs $\RR\preccurlyeq \RR'$. We start by sorting the nodes of the tree left-to-right (via the standard inorder traversal) such that:
	\begin{enumerate}
		\item if $x\to y$ is a blue branch, then everything above $y$ (including $y$ itself) sits to the left of $x$;
		\item if $x\to y$ is a green branch, then everything above $y$ sits to the right of $x$;
		\item if $x\to y$ is a red branch, then everything above $y$ sits to the right of $x$ and also to the right of everything in the green branch of $x$.
	\end{enumerate}
	We say that such a tree is \emph{admissibly ordered}. Every tree has a unique admissible ordering. We refer the reader to \autoref{tree-example} for a tree ordered in such a fashion.
	
	Given some node $x$ in a tricolored tree $T$, the \emph{red-green component} of $x$ is defined to be the set of nodes $y$ for which the (unique) path connecting $x$ to $y$ consists of only red and/or green edges. We write $x\sim_T y$ if $y$ lies in the red-green component of $x$. This defines an equivalence relation on the vertices of $T$. We write $x\to_T y$ if $y$ is descended from $x$.
	
	\begin{construction}\label{cons:forward}
	Given an admissibly ordered tricolored tree $T$, we define $\pi_{\RR'}(x)$ to be the maximal $y\sim_T x$. We define $\pi_{\RR}(x)$ to be the maximal $y$ such that: 
	\begin{enumerate}
	\item $x\sim_T y$,
	\item $x\to_T y$, and 
	\item either $x=y$ or the path from $x$ to $y$ starts with a green edge. 
	\end{enumerate}
	Equivalently, if $x$ has no green children then we define $\pi_\RR(x) = x$; otherwise if $w$ is the green child of $x$ then $\pi_\RR(x)$ is the right-most descendant of $w$.
\end{construction}

\newpage
	\begin{example}\label{tree-example}
	We begin with the following tree that we have admissibly ordered:
	\begin{figure}[h]
		\centering
		\begin{tikzpicture}
		\draw[step=1.0,black!20,thin] (0.0,0.0) grid (13,4);
		\foreach \xtick in {0,...,13}  \node at (\xtick,-0.3) {\xtick};
		\node[circle, draw=black, fill=black, inner sep=0pt,minimum size=5pt] (0) at  (0, 2) {};
		\node[circle, draw=black, fill=black, inner sep=0pt,minimum size=5pt] (1) at  (1, 1) {};
		\node[circle, draw=black, fill=black, inner sep=0pt,minimum size=5pt] (2) at  (2, 3) {};
		\node[circle, draw=black, fill=black, inner sep=0pt,minimum size=5pt] (3) at  (3, 2) {};
		\node[circle, draw=black, fill=black, inner sep=0pt,minimum size=5pt] (4) at  (4, 3) {};
		\node[circle, draw=black, fill=black, inner sep=0pt,minimum size=5pt] (5) at  (5, 4) {};
		\node[circle, draw=black, fill=black, inner sep=0pt,minimum size=5pt] (6) at  (6, 3) {};
		\node[circle, draw=black, fill=black, inner sep=0pt,minimum size=5pt] (7) at  (7, 0) {};
		\node[circle, draw=black, fill=black, inner sep=0pt,minimum size=5pt] (8) at  (8, 1) {};
		\node[circle, draw=black, fill=black, inner sep=0pt,minimum size=5pt] (9) at  (9, 2) {};
		\node[circle, draw=black, fill=black, inner sep=0pt,minimum size=5pt] (10) at  (10, 2) {};
		\node[circle, draw=black, fill=black, inner sep=0pt,minimum size=5pt] (11) at  (11, 2) {};
		\node[circle, draw=black, fill=black, inner sep=0pt,minimum size=5pt] (12) at  (12, 3) {};
		\node[circle, draw=black, fill=black, inner sep=0pt,minimum size=5pt] (13) at  (13, 1) {};
		\draw[thick,gBlue] (0) -- (1);
		\draw[thick,Green] (1) -- (2);
		\draw[thick,gRed] (1) -- (3);
		\draw[thick,Green] (3) -- (4);
		\draw[thick,gRed] (4) -- (5);
		\draw[thick,gRed] (3) -- (6);
		\draw[thick,gBlue] (7) -- (1);
		\draw[thick,Green] (7) -- (8);
		\draw[thick,Green] (8) -- (9);
		\draw[thick,gRed] (8) -- (10);
		\draw[thick,gRed] (7) -- (13);
		\draw[thick,gBlue] (11) -- (13);
		\draw[thick,Green] (11) -- (12);
		\end{tikzpicture}
	\end{figure}
	
	We then read off the values of $\pi_{\RR'}(x)$ to be:
\begin{table}[h]
\begin{tabular}{c|c|c|c|c|c|c|c|c|c|c|c|c|c|c}
$x$             & 0 & 1 & 2 & 3 & 4 & 5 & 6 & 7 & 8 & 9 & 10 & 11 & 12 & 13 \\ \hline
$\pi_{\RR'}(x)$ & 0 & 6 & 6 & 6 & 6 & 6 & 6 & 13 & 13 & 13 & 13  & 12  & 12  & 13 
\end{tabular}
\end{table}
	
	and the values of $\pi_{\RR}(x)$ to be:
\begin{table}[h]
\begin{tabular}{c|c|c|c|c|c|c|c|c|c|c|c|c|c|c}
$x$             & 0 & 1 & 2 & 3 & 4 & 5 & 6 & 7 & 8 & 9 & 10 & 11 & 12 & 13 \\ \hline
$\pi_{\RR}(x)$ & 0 & 2 & 2 & 5 & 4 & 5 & 6 & 10 & 9 & 9 & 10  & 12  & 12  & 13 
\end{tabular}
\end{table}	
	\end{example}
	
	\begin{lemma}
		Let $T$ be an admissibly ordered tricolored tree. If $x\to_T y$ and $x\leqslant z\leqslant y$, then $x\to_T z$. Similarly if $x\to_T y$ and $y\leqslant z\leqslant x$, then $x\to_T z$.
	\end{lemma}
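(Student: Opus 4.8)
The plan is to observe that the two assertions of the lemma are together equivalent to the single statement that, for every node $x$, the set $D(x)$ consisting of $x$ together with all its descendants is \emph{convex} with respect to the admissible ordering, i.e.\ occupies a block of consecutive positions in $\{0,\dots,n\}$. Granting this, the lemma is immediate: $x$ and $y$ both lie in $D(x)$, so any position between them lies in $D(x)$ as well (and the two cases in the statement simply correspond to the two possible orders of $x$ and $y$). So everything reduces to proving convexity of all the descendant-sets.

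I would prove convexity by induction on the number of nodes of $T$, the one-node case being trivial. For the inductive step write $\rho$ for the root and let $b,g,r$ be its blue, green, and red children (any of which may be absent), with $T_b,T_g,T_r$ the subtrees they span. Two points drive the argument. First, admissibility is a local condition: each of (1)--(3) refers only to the set of nodes above some edge and to the green branch out of a node, all of which is data internal to the subtree below that edge; hence the admissible ordering of $T$ restricts to an admissible ordering of each $T_c$. Second, conditions (1)--(3) applied to the three edges out of $\rho$ force the global layout: every node of $T_b$ precedes $\rho$, every node of $T_g$ and of $T_r$ follows $\rho$, and every node of $T_g$ precedes every node of $T_r$. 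Since $\{\rho\},T_b,T_g,T_r$ partition all $n+1$ positions, these constraints leave no room for interleaving: the positions are, in order, the nodes of $T_b$, then $\rho$, then the nodes of $T_g$, then the nodes of $T_r$, with each $T_c$ occupying a consecutive block. Applying the induction hypothesis inside each $T_c$ (whose inherited ordering is admissible and sits order-isomorphically inside its block of $T$) shows that every descendant-set contained in some $T_c$ is an interval of $T$'s ordering; the only remaining descendant-set is $D(\rho)$, which is all of $\{0,\dots,n\}$. This closes the induction.

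The step requiring the most care is the claim that (1)--(3) at $\rho$ genuinely pin down the block decomposition $T_b\mid\rho\mid T_g\mid T_r$ with no gaps --- in particular that no node slips in between $T_b$ and $\rho$, or between $\rho$ and $T_g$, and so on. The resolution is bookkeeping rather than subtlety: every node other than $\rho$ belongs to exactly one $T_c$, and for that $c$ the pertinent condition places \emph{all} of $T_c$ strictly to one side of $\rho$ (and, comparing $r$ with $g$, all of $T_r$ strictly to one side of $T_g$), so once these sets are laid down in the forced order they must be contiguous, since together with $\rho$ they exhaust every position. Minor care is also needed for absent children and for confirming that $x\to_T x$ holds (so that $x\in D(x)$ and the reduction to convexity is legitimate), but both are routine once noted.
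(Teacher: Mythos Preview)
Your argument is correct. The reduction to convexity of the descendant sets $D(x)$ is sound (with $\to_T$ reflexive, as the paper uses it), the locality of conditions (1)--(3) genuinely does make the restricted ordering on each $T_c$ admissible, and the counting argument that forces the block layout $T_b\mid\rho\mid T_g\mid T_r$ is airtight once one notes that every non-root node lies in exactly one $T_c$.

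The paper reaches the same conclusion by a different organization. Rather than a global induction, it argues by contradiction at a single node: assuming $z$ is not a descendant of $x$, it passes to the lowest common ancestor $x\wedge z$ and invokes the block decomposition \emph{only there}, observing that the blue, green, and red subtrees below $x\wedge z$ (together with $x\wedge z$ itself) are linearly arranged by (1)--(3); since $x$ and $y$ lie in the same block and $z$ is squeezed between them, $z$ is forced into that block too, contradicting the choice of $x\wedge z$ as the meet. So both proofs rest on the same ordering fact --- the $B\mid v\mid G\mid R$ layout below any vertex $v$ --- but yours promotes it to a structural theorem (every $D(x)$ is an interval) via induction, while the paper deploys it once, locally, to kill a hypothetical counterexample. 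Your version is a bit longer but yields the interval statement as an explicit byproduct; the paper's is terser but leaves that statement implicit.
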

	\begin{proof}
  This follows from admissible orderings corresponding to inorder traversal; we leave the details to the reader.
	\end{proof}
	
	\begin{theorem}
		The above construction is a bijection from the set of tricolored trees with $n+1$ nodes to the set of composition closed premodel structures on $[n]$.
	\end{theorem}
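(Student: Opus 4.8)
The plan is to use \autoref{thm:main} together with \autoref{prop:ncistr} to reinterpret the target set: a composition closed premodel structure on $[n]$ is the same datum as an ordered pair $(\pi,\pi')$ of functions $[n]\to[n]$ in which each of $\pi,\pi'$ equals $\pi_\RR$ for a (unique) transfer system, and in which every fibre of $\pi$ is contained in a fibre of $\pi'$. So the goal becomes to show that the assignment $T\mapsto(\pi_\RR,\pi_{\RR'})$ is a well-defined bijection from admissibly ordered tricolored trees on $n+1$ nodes onto this set of pairs.

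For well-definedness, the straightforward clauses are the following. We have $\pi_\RR(x),\pi_{\RR'}(x)\geqslant x$ because $x$ is always among the candidate values. The map $\pi_{\RR'}$ is idempotent because $\sim_T$ is an equivalence relation, so $\pi_{\RR'}(x)$ is the top of the $\sim_T$-class of $x$ and re-applying $\pi_{\RR'}$ there returns the same class. The map $\pi_\RR$ is idempotent because $\pi_\RR(x)$ --- being either $x$ or the right-most descendant of the green child of $x$ --- can itself have no green child, since a green child's subtree would lie strictly to its right, contradicting maximality. We have $\pi_\RR\leqslant\pi_{\RR'}$ pointwise because $\pi_\RR(x)\sim_T x$. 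Finally, each fibre of $\pi_\RR$ lies inside a single $\sim_T$-class (if $\pi_\RR(y)=\pi_\RR(x)$ then $y\sim_T\pi_\RR(y)=\pi_\RR(x)\sim_T x$), hence inside a single fibre of $\pi_{\RR'}$, which is precisely the Kreweras-order inequality required to invoke \autoref{thm:main}. The one genuinely substantive point is that the fibres of $\pi_\RR$ and of $\pi_{\RR'}$ really do form \emph{noncrossing} partitions of $[n]$; this does not follow from idempotence and $\pi(x)\geqslant x$ alone. Here I would extract from the admissible-ordering rules (1)--(3), in the spirit of the preceding lemma, the normal form that the left-to-right order on nodes is the depth-first order ``blue subtree, then root, then green subtree, then red subtree'', so that every subtree occupies a contiguous block of positions. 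Granting this, the positions occupied by a $\sim_T$-class $C$ are those of its red-green spine together with the gaps between consecutive spine nodes, and each such gap is a complete subtree attached to the spine along a blue edge; since a path realizing $\sim_T$ uses only red and green edges it cannot traverse such a blue edge, so any other class meeting a gap is contained entirely inside it. This wedging property rules out a crossing quadruple $a<b<c<d$, and the same argument applied to the analogous gap-description of the $\pi_\RR$-fibres inside each $\sim_T$-class handles $\pi_\RR$.

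For bijectivity I would first prove injectivity by reconstructing $T$ from $(\pi_\RR,\pi_{\RR'})$: the nonempty fibres of $\pi_{\RR'}$ recover the red-green classes of $T$; comparing $\pi_\RR$ against the identity and the linear order recovers, inside each class, the green-child relation and hence the entire red-green spine; and the laminar nesting of the position blocks (with their gaps) of the classes recovers the blue edges joining the classes. Thus $T$ is determined uniquely. Surjectivity is then quickest by counting: by the remark following the construction of colored trees, adjoining a leaf for each absent branch colour and then forgetting colours identifies tricolored trees on $n+1$ nodes with full ternary trees on $n+1$ internal nodes, of which there are $\tfrac{1}{2n+3}\binom{3n+3}{n+1}$; by \autoref{cor:thecount} this equals $|\mathsf{C}([n])|$, so an injection between these two finite sets is automatically surjective. (Alternatively one can write down the inverse map directly --- assembling an admissibly ordered tree from the noncrossing block-structure of $\RR'$ refined by $\RR$, and checking well-formedness against the preceding lemma --- or simply invoke the realizer bijection of~\cite{intervals} and match conventions; the counting route is the shortest.)

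I expect the main obstacle to be concentrated entirely in the noncrossing step of well-definedness, and within it in two places: (i) proving that the admissible-ordering conditions (1)--(3) force the ``blue / root / green / red'' depth-first block structure on positions, and (ii) converting the wedging property of the red-green classes and of the $\pi_\RR$-fibres into a clean verification of the noncrossing condition. Once those are in hand, well-definedness, the reconstruction underlying injectivity, and the identification with ternary trees are all routine.
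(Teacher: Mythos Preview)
Your proposal is correct and follows the same three–step architecture as the paper's proof: verify that the output lies in $\mathsf C([n])$, show injectivity by reconstructing $T$ from $(\pi_\RR,\pi_{\RR'})$, and finish by a cardinality match. The differences are in packaging rather than substance. For well-definedness you pass through \autoref{prop:ncistr} and aim to show the fibres of $\pi_\RR$ and $\pi_{\RR'}$ are noncrossing; the paper instead checks the equivalent ``pullback'' condition $x\leqslant y\leqslant\pi(x)\Rightarrow\pi(y)\leqslant\pi(x)$ directly from the preceding lemma, which is a little shorter because it does not require first extracting the depth-first block structure from the admissible-ordering rules or describing the gap structure of red--green components (your term ``spine'' is a slight oversimplification, since a red--green component is a binary subtree, but the wedging argument still goes through once you know subtrees occupy contiguous blocks). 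For the count, the paper chains through \cite{intervals} (tricolored trees $\leftrightarrow$ stacked triangulations $\leftrightarrow$ Kreweras intervals) and \autoref{thm:main}, whereas you identify tricolored trees with ternary trees and invoke \autoref{cor:thecount}; your route is more self-contained. The injectivity step in the paper is more explicit than your sketch (green edges are recovered first, then red edges via an induction on $x$, then blue edges by the rule ``blue parent of a red--green root $x$ is $\pi_{\RR'}(x)+1$''), but the underlying idea is the same reconstruction you describe.
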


	\begin{proof}
		We will begin by showing that
		\begin{enumerate} 
		\item \autoref{cons:forward} always outputs a composition closed premodel structure, and
		\item \autoref{cons:forward}  is injective.
		\end{enumerate}
		After this, we will use the inductive process described in \autoref{rem:tricoltrees} to construct an inverse to \autoref{cons:forward}.
		
		Let $\pi_{\RR},\pi_{\RR'}$ be obtained from some tricolored tree $T$. By construction $\pi_{\RR}(x)$ and $\pi_{\RR'}(x)$ are non-decreasing and idempotent.
		
		Suppose $x\leqslant y\leqslant \pi_{\RR'}(x)$. We need to show $\pi_{\RR'}(y)\leqslant\pi_{\RR'}(x)$. Let $z$ be the lowest node in the red-green path connecting $x$ to $\pi_{\RR'}(x)$. Clearly $z\leqslant x$ and $\pi_{\RR'}(z) = \pi_{\RR'}(x)$, so we might as well assume $x=z$, so that $x\to_T\pi_{\RR'}(x)$. By the lemma this implies $y$ is also descended from $x$. If the path connecting $y$ to $\pi_{\RR'}(y)$ passes through $x$ then we can restrict this to give a red-green path from $x$ to $\pi_{\RR'}(y)$ which implies $\pi_{\RR'}(y)\leqslant\pi_{\RR'}(x)$ by maximality of $\pi_{\RR'}(x)$. Otherwise the path between $y$ and $\pi_{\RR'}(y)$ must remain entirely above $x$, which implies $\pi_{\RR'}(y)$ is also descended from $x$. Let $w$ be the right-most descendant of $x$, so that $\pi_{\RR'}(y)\leqslant w$ by definition. By construction of the admissible ordering, the path from $x$ to $w$ must be a red-green path, and hence $w\leqslant \pi_{\RR'}(x)$.
		
		Now suppose $x\leqslant y\leqslant \pi_{\RR}(x)$. Then $x\to_T\pi_{\RR}(x)$ by definition, so by the lemma this implies $y$ is also descended from $x$. If $y = x$ there's nothing to show. If $y$ were descended along blue edge then $y<x$ is a contradiction, and if $y$ were descended along a red edge then $y>\pi_\RR(x)$ is a contradiction. Thus $y$ is descended along a green edge. Let $x\to w$ be the green edge descended from $x$, so that $y$ and $\pi_{\RR}(x)$ are both descended from $w$. Then by construction, $\pi_\RR(x)$ is the rightmost descendant of $w$, and hence in particular $\pi_\RR(x)\geqslant\pi_\RR(y)$ since $\pi_\RR(y)$ is descended from $y$ which is descended from $w$.
		
		Thus $\RR,\RR'$ are transfer systems. It remains to show $\RR\preccurlyeq\RR'$, i.e., $\pi_{\RR}(x) = \pi_{\RR}(y)$ implies $\pi_{\RR'}(x) = \pi_{\RR'}(y)$. But $\pi_{\RR}(x) = \pi_{\RR}(y)$ implies in particular that there is a red-green path connecting $x$ and $y$, so this is immediate.
		
		We now prove (2). Let $T$ be an admissibly-ordered tree on the vertex set $[n]$, let $\RR\preccurlyeq\RR'$ be the corresponding composition closed pair, and let $x\in [n]$.
		
		First observe that we can recover $\sim_T$, since $\pi_{\RR'}(x) = \pi_{\RR'}(y)$ if and only if $x$ and $y$ lie in the same red-green component. Let $y>x$ be minimal such that $x\sim_T y$ and $x \,\RR \, y$. If $x$ has no green child then by definition of $\RR$ we must have $x = y$. Otherwise letting $w$ be the green child of $x$, by minimality of $y$ we have $x<y\leqslant w$. In the latter case, by the lemma, $y$ must be descended from $x$ along a green edge, and hence $y$ is descended from $w$. If $y<w$ then $y$ must be descended from $w$ along a blue edge, but this contradicts the assumption that $x$ and $y$ lie in the same red-green component. Thus $y=w$ is the green child of $x$, showing that we can recover all the green edges of $T$ from $\RR,\RR'$.
		
		Now let $y>x$ be minimal such that $x\sim_T y$ and $x \, \cancel\RR \, y$. Let $x\wedge y$ be the lowest point on the minimal path between $x$ and $y$. We claim $y$ is the red child of $x\wedge y$. Suppose first that $x\wedge y = x$. Then $y$ is descended from $x$ and $x\sim_T y$, so if $y$ were descended along a green edge then we would have $x \, \RR \, y$, a contradiction. Thus $y$ is descended along a red edge, and in particular $x$ has a red child $w$. By the same argument as before, $y = w$ by minimality. In the general case, note that neither $x$ nor $y$ can be descended from $x\wedge y$ along a blue edge since $x\sim_T y$. Since $x<y$ we thus must have $x$ descended from $x\wedge y$ along an initially green path and $y$ descended from $x\wedge y$ along an initially red path. Another minimality argument shows $y$ must be the red child of $x\wedge y$.
		
		By induction we can assume that for all $x'<x$ we've identified the red child of $x'$ using only $\RR$ and $\RR'$. If no $y$ exists as above then we know $x$ has no red children. So suppose we've found some minimal $y$ as above. Then we know $y$ is the red child of $x\wedge y$, and the path from $x\wedge y$ to $x$ is red-green. Since red-green edges always point to the right this implies $x\wedge y\leqslant x$, so either $y$ is the red child of $x$ or there exists some $x\wedge y<x$ such that $y$ is the red child of $x\wedge y$. We can recognize the latter case by the induction hypothesis, so this shows we can identify all the red edges in $T$.
		
		All that remains is to show how to recover the blue edges from $\RR$ and $\RR'$. Let $T'$ be the red-green forest that we've constructed, and let $x$ be the root of some tree in $T'$. We claim either $x$ is the root of $T$ or $\pi_{\RR'}(x)+1$ is the blue parent of $x$. Indeed this is forced by admissibility of the ordering. Suppose $x$ is not the root of $T$ and let $y$ be the blue parent of $x$. Since $x$ is not the red or green descendant of any node, the node $\pi_{\RR'}(x)$ is the furthest-right descendant of $x$ and hence $\pi_{\RR'}(x)<y$ by admissibility. But if $y>\pi_{\RR'}(x)+1>x$ then by the lemma $\pi_{\RR'}(x)+1$ must be descended from $y$ along a blue edge which would imply $\pi_{\RR'}(x)+1$ is descended from $x$, contradicting maximality of $\pi_{\RR'}(x)$. Thus $y = \pi_{\RR'}(x)+1$ as claimed and we are done with showing points (1) and (2).
		
		We now construct an inverse to \autoref{cons:forward}. Suppose that we have a Kreweras pair $\RR\preccurlyeq \RR'$ coming from some tricolored tree. We wish to reconstruct the tricolored tree corresponding to it. As in \autoref{rem:tricoltrees} we can use the inductive nature of building tricolored trees to do this. Given the interval $\RR\preccurlyeq \RR'$ we wish to decompose it as a root, and an ordered list of three intervals which we label  blue, red, and green. This will, by induction, provide the required inverse map.
		
		The root of the tree $r$, will be given as the minimal element in $\pi^{-1}_{\RR'}(n)$. The labels for the blue subtree will be the interval $[1,r-1]$. For the green subtree, we consider the interval $[r+1,m]$ where $m$ is the maximal element in the block of $r$ in the non-crossing partition $\pi_\RR$. If $m=r$ then this green subtree is empty. The labels of the red subtree is then the remaining interval $[m+1,n]$. We can then obtain intervals from these subtrees by applying \autoref{cons:forward} to them. That is, we have divided the Kreweras pair $\RR\preccurlyeq \RR'$ into a root, and an ordered 3-tuple of Kreweras pairs for smaller $n$. By induction, on the number of vertices we see that this allows to build a map from Kreweras pairs to tricolored trees which is injective as required.
	\end{proof}
	
	Now that we have proved that colored trees are in bijection with premodel structures on $[n]$, we move towards isolating the collection of model structures among them, and provide a proof of \autoref{mainthm}. 
	
	\begin{prop}
		A colored tree has a red branch sitting somewhere above a blue branch (i.e., the source of the red branch is a descendant of the top of the blue branch) if and only if there exists some $x$ such that $\pi_{\RR}(x)<\pi_{\RR'}(x)<n$.
	\end{prop}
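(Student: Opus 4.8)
The plan is to translate both sides of the equivalence into statements about the red-green components of $T$ and then read them off against the formulas for $\pi_\RR$ and $\pi_{\RR'}$. Write $C(x)$ for the red-green component of a node $x$ and $\rho$ for the root of $T$. I would first record a few elementary consequences of admissibility together with the explicit descriptions of $\pi_\RR,\pi_{\RR'}$:
\begin{enumerate}
\item[(a)] $\pi_{\RR'}(x)$ is the largest label occurring in $C(x)$; in particular $\pi_{\RR'}(x)<n$ if and only if $n\notin C(x)$.
\item[(b)] $n\in C(\rho)$. Indeed $n$ is a weak descendant of $\rho$, and if the path from $\rho$ to $n$ contained a blue edge $u\to v$, then rule~(1) would put $n$ (which lies above $v$) strictly to the left of $u$, i.e.\ $n<u\leqslant n$, which is absurd; so the path is red-green and $n\sim_T\rho$.
\item[(c)] If $d$ is the red child of $c$ then $\pi_\RR(c)<d\leqslant\pi_{\RR'}(c)$: the right inequality holds since $d\in C(c)$, and the left one since by rule~(3) the node $d$ lies strictly to the right of every node of $\{c\}\cup(\text{green subtree of }c)$, which contains $\pi_\RR(c)$.
\item[(d)] If $C(x)$ has no red edge then $\pi_\RR=\pi_{\RR'}$ on all of $C(x)$. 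For then no node of $C(x)$ has a red child, so (green children being unique) $C(x)$ is a single green path $u_0\to\cdots\to u_m$, and a direct computation of rightmost descendants gives $\pi_\RR(u_i)=u_m=\pi_{\RR'}(u_i)$ for every $i$.
\end{enumerate}
Since the red-green components partition $[n]$, (a) and (b) combine to the key reformulation: $\pi_{\RR'}(x)<n$ if and only if $C(x)\neq C(\rho)$ if and only if the root of $C(x)$ is not $\rho$; and when the root $b$ of $C(x)$ is not $\rho$, the parent edge of $b$ can be neither red nor green (else the parent of $b$ would itself lie in $C(x)$, contradicting that $b$ is the topmost node of $C(x)$), so $b$ is the top of a blue branch.

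For the forward implication, suppose $T$ has a red branch $c\to d$ whose source $c$ is a weak descendant of the top $b$ of a blue branch $a\to b$; take $x=c$. Then $\pi_\RR(c)<\pi_{\RR'}(c)$ by~(c). The path from $c$ up to the root of $C(c)$ is red-green and hence cannot cross the blue edge $a\to b$, so that root, and therefore $c$, lies in the subtree of $b$; since $\rho$ is a proper ancestor of $b$ this forces $C(c)\neq C(\rho)$, whence $n\notin C(c)$ by~(b) and $\pi_{\RR'}(c)<n$ by~(a). Conversely, suppose $\pi_\RR(x)<\pi_{\RR'}(x)<n$. By~(a) and~(b) we get $C(x)\neq C(\rho)$, so by the reformulation above the root $b$ of $C(x)$ is the top of a blue branch $a\to b$. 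Since $\pi_\RR(x)\neq\pi_{\RR'}(x)$, the contrapositive of~(d) produces a red edge $c\to d$ inside $C(x)$; as $c$ is a weak descendant of the root $b$ of $C(x)$, this red branch is descended from the blue branch $a\to b$, as required.

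I expect observation~(b) — that $n$ always lies in the red-green component of the root — to be the point that requires genuine care, since it is where "rightmost in the admissible order" has to be reconciled with "topmost in the tree"; the computation in~(d) is routine but must be carried out attentively (one has to note that blue subtrees always sit to the left and so never contribute the maximal label). Once (a)–(d) are in place, both implications are essentially formal manipulations with the partition into red-green components.
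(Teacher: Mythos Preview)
Your proof is correct and follows essentially the same route as the paper's: both arguments hinge on the two observations that a red branch at $c$ forces $\pi_\RR(c)<\pi_{\RR'}(c)$, and that $\pi_{\RR'}(x)<n$ is equivalent to the path from $x$ to the root meeting a blue edge (equivalently, $x$ lies in a red--green component other than that of the root). Your packaging via the red--green components $C(x)$ and observations (a)--(d) is a little more explicit than the paper's terser phrasing, but the underlying ideas are identical.
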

	\begin{proof}
		Suppose we have a red branch $x\to y$. Then $y$ must lie to the right of every descendant of the (possibly non-existent) green branch coming out of $x$, and hence we have $y>\pi_{\RR}(x)$. But clearly $\pi_{\RR'}(x)\geqslant y$, so this shows $\pi_{\RR}(x)<\pi_{\RR'}(x)$. As a partial converse, if $\pi_{\RR}(x)<\pi_{\RR'}(x)$ then either $x$ has a red branch or there must be an upward-directed green path from some $z$ to $x$ such that $z$ has a red branch.
		
		Now suppose $x$ sits somewhere above a blue branch $y\to z$ (i.e., $x$ is a descendant of $z$ in the tree order). Then all descendants of $z$ must lie to the left of $y$, and hence if there exists a red-green path from $x$ to some $w$ then $w<y$. Thus in particular $\pi_{\RR'}(x)<n$. Conversely, if all ancestor branches of $x$ are green or red then we have a green-red path to the root, and clearly there is a red-green path from the root $n$, so in this case $\pi_{\RR'}(x) = n$.
		
		Combining these observations gives the desired equivalence.
	\end{proof}
		
	We shall write $x \, \LL'\to\RR \, y$ to denote that there exists some $z$ with $x \, \LL' \, z \, \RR \, y$.
		
	\begin{prop}\label{prop1}
		Let $\RR\preccurlyeq \RR'$ be a Kreweras pair of transfer systems on $[n]$. Then $\pi_{\RR}(x)<\pi_{\RR'}(x)$ implies $\pi_{\RR'}(x)=n$ if and only if $x \, \LL'\to\RR \, y$, $z \, \LL'\to\RR \, y$, and $x\leqslant z$ implies $x \, \LL'\to\RR \, z$.
	\end{prop}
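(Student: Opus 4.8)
The plan is to reduce everything to the combinatorics of the maps $\pi_\RR$ and $\pi_{\RR'}$, then treat the two implications separately. First I would record the translations I need, all valid on a finite total order. From the transfer axiom, $\{j : i\,\RR\,j\}$ is the interval $[i,\pi_\RR(i)]$, and likewise for $\RR'$; transitivity of $\RR'$ then gives $\pi_{\RR'}(j)\leqslant\pi_{\RR'}(i)$ whenever $i\leqslant j\leqslant\pi_{\RR'}(i)$. Unwinding the lifting property on a poset, for $i\leqslant w$ one has $i\,\LL'\,w$ exactly when no $b\in[i,w-1]$ satisfies $\pi_{\RR'}(b)\geqslant w$. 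Calling $w$ a \emph{witness} for $x\,\LL'\to\RR\,y$ when $x\,\LL'\,w\,\RR\,y$, this says that $x\,\LL'\to\RR\,y$ holds iff some $w\in[x,y]$ has $\pi_\RR(w)\geqslant y$ and $\pi_{\RR'}(b)<w$ for all $b\in[x,w-1]$. Finally, by \autoref{thm:main} and the observation opening its proof, $\RR\preccurlyeq\RR'$ is equivalent to $\pi_{\RR'}\circ\pi_\RR=\pi_{\RR'}$, so the left-hand condition ``$\pi_\RR(x)<\pi_{\RR'}(x)\Rightarrow\pi_{\RR'}(x)=n$ for all $x$'' reads: $\pi_{\RR'}(x)<n$ forces $\pi_\RR(x)=\pi_{\RR'}(x)$. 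From this reformulation I would extract the single consequence used below: if $\pi_{\RR'}(c)<n$ then $\pi_{\RR'}(b)\leqslant\pi_\RR(c)$ for every $b\in[c,\pi_\RR(c)]$, since $\pi_\RR(c)=\pi_{\RR'}(c)$ and $c\,\RR'\,b$ forces $\pi_{\RR'}(b)\leqslant\pi_{\RR'}(c)$.

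For the forward implication I would prove, assuming the left-hand condition, the slightly stronger statement that $x\,\LL'\to\RR\,y$ together with $x\leqslant z\leqslant y$ implies $x\,\LL'\to\RR\,z$; the condition in the proposition then follows at once, as $z\,\LL'\to\RR\,y$ forces $z\leqslant y$. So fix a witness $a$ for $x\,\LL'\to\RR\,y$. If $a\leqslant z$ then $a$ is already a witness for $x\,\LL'\to\RR\,z$, so I may assume $z<a$, hence $x<a$. Run the greedy recursion $c_0=x$, and as long as $\pi_\RR(c_k)<z$ set $c_{k+1}=\pi_\RR(c_k)+1$; I would show by induction that every $c_k$ satisfies $c_k\leqslant z$, $\pi_{\RR'}(c_k)<n$, and $x\,\LL'\,c_k$. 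Indeed $c_k\leqslant z<a$ puts $c_k$ into $[x,a-1]$, so the condition $x\,\LL'\,a$ forces $\pi_{\RR'}(c_k)<a\leqslant n$, whence $\pi_\RR(c_k)=\pi_{\RR'}(c_k)$; and $x\,\LL'\,c_{k+1}$ follows by checking $\pi_{\RR'}(b)<\pi_\RR(c_k)+1=c_{k+1}$ for all $b\in[x,\pi_\RR(c_k)]$, which holds on $[c_k,\pi_\RR(c_k)]$ by the consequence extracted above and on $[x,c_k-1]$ by $x\,\LL'\,c_k$, since there $\pi_{\RR'}(b)<c_k\leqslant\pi_\RR(c_k)$. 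As $(c_k)$ is strictly increasing and stays $\leqslant z$, the recursion halts at some $c_K$ with $\pi_\RR(c_K)\geqslant z\geqslant c_K$, i.e.\ $c_K\,\RR\,z$; combined with $x\,\LL'\,c_K$ this gives $x\,\LL'\to\RR\,z$.

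For the reverse implication I would argue by contraposition. Given $x_0$ with $\pi_\RR(x_0)<\pi_{\RR'}(x_0)<n$, set $z_0=\pi_\RR(x_0)+1$ and $y_0=\pi_{\RR'}(x_0)+1$, so that $x_0\leqslant z_0\leqslant y_0\leqslant n$. Transitivity of $\RR'$ gives $\pi_{\RR'}(b)\leqslant\pi_{\RR'}(x_0)<y_0$ for all $b\in[x_0,\pi_{\RR'}(x_0)]$, so both $x_0\,\LL'\,y_0$ and $z_0\,\LL'\,y_0$ hold, and $w=y_0$ is a witness for each of $x_0\,\LL'\to\RR\,y_0$ and $z_0\,\LL'\to\RR\,y_0$. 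On the other hand, any witness $w$ for $x_0\,\LL'\to\RR\,z_0$ would satisfy $x_0\leqslant w\leqslant z_0$ and $\pi_\RR(w)\geqslant z_0=\pi_\RR(x_0)+1$; the choice $w=x_0$ is impossible, and if $w>x_0$ then $x_0\,\LL'\,w$ forces $\pi_{\RR'}(x_0)<w$, hence $w>\pi_{\RR'}(x_0)\geqslant\pi_\RR(x_0)+1=z_0$, contradicting $w\leqslant z_0$. Thus $x_0$, $z_0$, $y_0$ exhibit the failure of the condition.

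The main obstacle is making the greedy recursion in the forward direction go through: each step must simultaneously advance toward $z$ and preserve the cofibration condition $x\,\LL'\,c_k$. Preserving $x\,\LL'\,c_{k+1}$ requires $\pi_{\RR'}(b)\leqslant\pi_\RR(c_k)$ throughout $b\in[x,\pi_\RR(c_k)]$ — on the old part $[x,c_k-1]$ this comes from $x\,\LL'\,c_k$, and on the newly adjoined stretch $[c_k,\pi_\RR(c_k)]$ from the extracted consequence of the hypothesis — and this in turn relies on the bookkeeping that keeps every $c_k$ inside $[x,a-1]$, so that the fixed condition $x\,\LL'\,a$ repeatedly yields $\pi_{\RR'}(c_k)<n$. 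Getting exactly this confinement right, rather than merely tracking the endpoint of the interval, is the delicate part.
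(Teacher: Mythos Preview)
Your argument is correct. Both directions go through cleanly; in particular the greedy recursion in the forward direction is sound: the invariant $c_k\leqslant z<a$ keeps each $c_k$ inside $[x,a-1]$, so $x\,\LL'\,a$ yields $\pi_{\RR'}(c_k)<a\leqslant n$, the left-hand hypothesis then collapses $\pi_{\RR'}(c_k)$ to $\pi_\RR(c_k)$, and the two-piece verification of $x\,\LL'\,c_{k+1}$ on $[x,c_k-1]$ and $[c_k,\pi_\RR(c_k)]$ is exactly right.

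Your approach differs from the paper's in the forward direction. The paper argues by contraposition: assuming $x\,\LL'\,x'\,\RR\,y$, $z\,\LL'\,z'\,\RR\,y$, $x\leqslant z$, and $x\,\cancel{\LL'\to\RR}\,z$, it reduces to $z<x'$, takes the minimal $w\geqslant x$ with $w\,\RR\,z$, and uses the failure of $x\,\LL'\,w$ to locate a single $u\in[x,w-1]$ with $u\,\RR'\,w$ but $u\,\cancel{\RR}\,w$; then $u<x'$ together with $x\,\LL'\,x'$ forces $\pi_{\RR'}(u)<n$, exhibiting a point where the left-hand condition fails. This is shorter and avoids any iteration. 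Your direct proof, by contrast, actually constructs the witness for $x\,\LL'\to\RR\,z$ and in fact establishes the stronger statement that $x\,\LL'\to\RR\,y$ and $x\leqslant z\leqslant y$ already suffice, without using the second hypothesis $z\,\LL'\to\RR\,y$ at all. In the reverse direction both proofs are contrapositive and very close in spirit; you take $z_0=\pi_\RR(x_0)+1$ where the paper takes $z=\pi_{\RR'}(x)$, but either choice works by the same mechanism (any would-be witness $w>x_0$ is blocked by $\pi_{\RR'}(x_0)\geqslant w$, and $w=x_0$ is blocked by $\pi_\RR$).
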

	\begin{proof}
		We begin with the contrapositive of the backwards implication. Suppose $\pi_{\RR}(x)<\pi_{\RR'}(x)<n$. Since $\pi_{\RR'}(x)<n$, we know $x\,\LL'\,y$ where $y=\pi_{\RR'}(x)+1$. Let $z = \pi_{\RR'}(x)$. Then we know $z\,\LL'\,y$. But if $x \, \LL' \, w \, \RR \, z$, then $z<y$ implies $w=x$ and hence $x \, \RR \, z = \pi_{\RR'}(x) > \pi_{\RR}(x)$. This contradicts our initial supposition, so it is not the case that $x \, \LL'\to\RR \, y$, $z \, \LL'\to\RR \, y$, and $x\leqslant z$ implies $x \, \LL'\to\RR \, z$ holds.
		
		Conversely, suppose $x \, \LL' \, x' \, \RR \, y$ and $z \, \LL' \, z' \, \RR \, y$ with $x\leqslant z$, but there does not exist any $w$ with $x \, \LL' \, w \, \RR \, z$. We can suppose $z<x'$ since otherwise $x'\leqslant z\leqslant y$ and $x' \, \RR \, y$ would imply $x \, \LL' \, x' \, \RR \, z$. Now let $w\geqslant x$ be minimal such that $w \, \RR \, z$ and suppose we have a square
		$$\begin{tikzcd}
		x\arrow[r]\arrow[d]&u\arrow[d,two heads,"\RR'"]\\
		w\arrow[r]& v
		\end{tikzcd}$$
		If $u<w$ then this forces $u \, \RR' \, w$, but $u \, \cancel\RR  \, w$ by transitivity of $\RR$ and minimality of $w$. Thus $\pi_{\RR}(u)<\pi_{\RR'}(u)$. On the other hand, $u<w\leqslant z<x'$, so if $\pi_{\RR'}(u) = n$ then we would have a diagram
		$$\begin{tikzcd}
		x\arrow[r]\arrow[d,swap,hook,"\LL'"]&u\arrow[d,two heads,"\RR'"]\\
		x'\arrow[r]& n
		\end{tikzcd}$$
		with no lift, a contradiction.
	\end{proof}
	
	Let $\theta_{\RR}(z)=y$ where $y\leqslant z$ is minimal such that $y \, \LL \, z$. The following lemma records the properties satisfied by this function that are dual to properties of $\pi$.
	
	\begin{lemma}\label{lemma1}
		Let $\RR$ be a transfer system on $[n]$. Then
		\begin{enumerate}
			\item $\theta_{\RR}$ is a non-increasing idempotent function, and $\theta_{\RR}(i)\leqslant j\leqslant i$ implies $\theta_{\RR}(j)\geqslant \theta_{\RR}(i)$.
			\item $j \, \LL \, i$ if and only if $\theta_{\RR}(i)\leqslant j\leqslant i$.
			\item For all $i\in[n]$, $\theta_{\RR}(i) = \max\{j<i \mid j \, \RR \, i\}+1$.
		\end{enumerate} 
	\end{lemma}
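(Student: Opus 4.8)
The plan is to reduce all three parts of the lemma to one explicit description of the left class $\LL$ of a transfer system on the totally ordered set $[n]$. First I would unwind the lifting relation $\boxslash$ inside the poset $[n]$: a commutative square with left leg $j\to i$ and right leg $c\to d$ exists exactly when $j\leqslant c$ and $i\leqslant d$, and it then admits a (unique, automatically commuting) diagonal filler exactly when $i\leqslant c$. Hence, for $j\leqslant i$, one has $j\,\LL\,i$ if and only if there is no pair $c\,\RR\,d$ with $j\leqslant c<i\leqslant d$; since $\pi_\RR(c)=\max\{d:c\,\RR\,d\}$, this says precisely that $\pi_\RR(c)<i$ for every $c$ with $j\leqslant c<i$. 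Alongside this I would record the ``interval'' shape that $\RR$ takes on a total order: because meets are minima, \autoref{def:transfersystem} gives that $c\,\RR\,d$ together with $c\leqslant e\leqslant d$ forces $c\,\RR\,e$, so $\{d:c\,\RR\,d\}=[c,\pi_\RR(c)]$; in particular, for $c<i$ we have $c\,\RR\,i$ if and only if $\pi_\RR(c)\geqslant i$.

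Granting these two observations, parts (3) and (2) come out immediately. The set $\{c<i:\pi_\RR(c)\geqslant i\}$ then coincides with $\{j<i:j\,\RR\,i\}$, and $j\,\LL\,i$ holds exactly when the interval $[j,i)$ is disjoint from it, i.e.\ exactly when $j\geqslant 1+\max\{j'<i:j'\,\RR\,i\}$ (with the convention $\max\varnothing=-1$, which correctly yields $\theta_\RR(i)=0$ — equivalently $0\,\LL\,i$ — when no $j'<i$ satisfies $j'\,\RR\,i$). Taking the least such $j$ is part (3); reading the same equivalence for an arbitrary $j\leqslant i$ rather than for the minimum is part (2).

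For part (1), I would deduce its order-theoretic assertions — idempotence, the bound $\theta_\RR(i)\leqslant i$, and the displayed implication — from (2) and (3). The bound $\theta_\RR(i)\leqslant i$ is clear since $i\,\LL\,i$. Idempotence and the implication then follow from one shared device: writing $m:=\theta_\RR(i)$ (the case $m=0$ being vacuous, so take $m\geqslant 1$), part (3) gives $(m-1)\,\RR\,i$, hence $\pi_\RR(m-1)\geqslant i\geqslant m$. Thus no $j<m$ can have $j\,\LL\,m$, since $m-1\in[j,m)$ would force $\pi_\RR(m-1)<m$ — this is idempotence; and if $m\leqslant j\leqslant i$, then for any $j'<m$ we have $m-1\in[j',j)$, so $j'\,\LL\,j$ would give $\pi_\RR(m-1)<j\leqslant i$, again a contradiction — this is the displayed implication. (These assertions are precisely the duals, under the order-reversing bijection $k\mapsto n-k$ identifying $[n]^{\op}\cong[n]$ and swapping $(\LL,\RR)$ for $(\RR^{\op},\LL^{\op})$, of the corresponding properties of $\pi$ for the transfer system attached to $(\RR^{\op},\LL^{\op})$ by \autoref{fooqwresult}; one could alternatively run the whole argument in that dual form.)

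The only genuine content lies in the first paragraph — and within it, in the interval-fibre structure of $\RR$ on a total order, which is exactly what lets one trade the inequality $\pi_\RR(c)\geqslant i$ for the relation $c\,\RR\,i$; everything afterwards is bookkeeping. The one place that needs a little care is the boundary behaviour — the case $i=0$, and more generally the case where $\{j<i:j\,\RR\,i\}$ is empty — which the convention $\max\varnothing=-1$ absorbs uniformly.
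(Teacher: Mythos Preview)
Your proposal is correct. The paper itself does not give a proof of this lemma at all: it simply introduces $\theta_\RR$ with the sentence ``The following lemma records the properties satisfied by this function that are dual to properties of $\pi$,'' and leaves the verification to the reader. Your parenthetical remark at the end --- that one could run the entire argument by applying the order-reversing involution $k\mapsto n-k$ on $[n]$, which swaps $(\LL,\RR)$ with $(\RR^{\op},\LL^{\op})$ and turns $\theta_\RR$ into $\pi$ for the dual transfer system --- is exactly the paper's intended justification.

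What you do in addition is supply a direct argument that does not rely on that duality. Your explicit unwinding of $j\,\LL\,i$ as ``no $c\in[j,i)$ has $\pi_\RR(c)\geqslant i$,'' together with the interval description $\{d:c\,\RR\,d\}=[c,\pi_\RR(c)]$, cleanly yields (3) and (2), and your deduction of (1) from these via the witness $(m-1)\,\RR\,i$ is correct and handles the boundary case $m=0$ properly. This is a genuine, self-contained proof where the paper offers only a gesture toward duality; either route is valid, and yours has the advantage of not needing to check that $(\RR^{\op},\LL^{\op})$ really is a weak factorization system on $[n]^{\op}$.
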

	
	One can observe that we have a red branch above a green branch somewhere if and only if there exist $x<y<z$ such that $x \, \RR \, z$ and $y \, \RR' \, z$ but $w \, \cancel \RR \,z$ for all $y\leqslant w<z$. But $x \, \RR \, z$ is equivalent to saying $x<\theta_{\RR}(z)$, and likewise $y \, \RR' \, z$ is equivalent to $y<\theta_{\RR'}(z)$. Furthermore $w \, \cancel \RR \, z$ for all $y\leqslant w<z$ is equivalent to $\theta_{\RR}(z)\leqslant y$. Thus we have shown
	
	\begin{prop}
		A colored tree associated with a transfer system $\RR$ has a red branch sitting somewhere above a green branch if and only if there exists some $z$ such that $0<\theta_{\RR}(z)<\theta_{\RR'}(z)$.
	\end{prop}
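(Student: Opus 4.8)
The plan is to reduce the tree-theoretic statement to a purely order-theoretic condition on the Kreweras pair $\RR \preccurlyeq \RR'$ and then rewrite that condition using $\theta$. The substantive first step is a geometric translation: the colored tree of $\RR \preccurlyeq \RR'$ has a red branch whose source is a descendant of the top of some green branch if and only if there exist $x < y < z$ in $[n]$ with $x \, \RR \, z$, with $y \, \RR' \, z$, and with $w \, \cancel\RR \, z$ for all $y \leqslant w < z$. I would prove this by unwinding the explicit bijection between colored trees and Kreweras pairs, tracking how the green and red edges on the path from the root to $z$ control the top of the $\RR$-predecessor interval of $z$ (i.e.\ $\theta_\RR(z)$) and the red--green component of $z$ (i.e.\ $\theta_{\RR'}(z)$): a green edge low on that path supplies the ``outer'' point $x$ with $x \, \RR \, z$, while a red edge sitting above it forces the largest $\RR$-predecessor of $z$ strictly below a point $y < z$ that is nonetheless still $\RR'$-related to $z$. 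This is parallel in spirit to, though not literally the dual of, the criterion for red branches above \emph{blue} branches proved above.

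Granting this translation, the statement follows from a short manipulation with \autoref{lemma1}(3), which gives $\theta_\RR(z) = \max\{j < z \mid j \, \RR \, z\} + 1$ and likewise for $\RR'$. For the forward implication, suppose $x < y < z$ are as above. The $\RR$-predecessor $x \geqslant 0$ forces $\theta_\RR(z) \geqslant x + 1 > 0$; the condition $w \, \cancel\RR \, z$ for all $w \in [y, z)$ says exactly that the largest $\RR$-predecessor of $z$ lies below $y$, i.e.\ $\theta_\RR(z) \leqslant y$; and $y \, \RR' \, z$ with $y < z$ gives $y \leqslant \theta_{\RR'}(z) - 1$. Chaining these, $0 < \theta_\RR(z) \leqslant y < \theta_{\RR'}(z)$. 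For the converse, suppose $0 < \theta_\RR(z) < \theta_{\RR'}(z)$, and set $y := \theta_{\RR'}(z) - 1$ and $x := \theta_\RR(z) - 1$. Since $\theta_\RR(z), \theta_{\RR'}(z) \geqslant 1$, both sets $\{j < z \mid j \, \RR \, z\}$ and $\{j < z \mid j \, \RR' \, z\}$ are nonempty, so $x \, \RR \, z$ and $y \, \RR' \, z$ by \autoref{lemma1}(3); one checks $0 \leqslant x < y < z$ using $\theta_\RR(z) < \theta_{\RR'}(z) \leqslant z$, and $w \, \cancel\RR \, z$ for every $w \in [y, z)$ because any such $w$ is at least $\theta_{\RR'}(z) - 1 \geqslant \theta_\RR(z)$, hence exceeds the largest $\RR$-predecessor $\theta_\RR(z) - 1$ of $z$. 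So $(x, y, z)$ witnesses the condition of the first step.

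The real obstacle is the geometric translation; the two algebraic directions are just bookkeeping with \autoref{lemma1}(3). The one subtlety is the choice of witness $y$ in the converse: the $\RR'$-predecessors of $z$ need not form a downward-closed set, so $\theta_\RR(z)$ itself need not be $\RR'$-related to $z$ even when $\theta_\RR(z) < \theta_{\RR'}(z)$; one therefore takes $y$ to be the \emph{largest} $\RR'$-predecessor $\theta_{\RR'}(z) - 1$, which by \autoref{lemma1}(3) is always $\RR'$-related to $z$ and still satisfies $y \geqslant \theta_\RR(z)$ by strictness.
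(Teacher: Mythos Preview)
Your proposal is correct and follows essentially the same route as the paper: first translate ``red branch above a green branch'' into the existence of $x<y<z$ with $x\,\RR\,z$, $y\,\RR'\,z$, and $w\,\cancel\RR\,z$ for all $y\leqslant w<z$, then rewrite this in terms of $\theta$ via \autoref{lemma1}(3). The paper likewise asserts the geometric translation without detailed proof (``one can observe that\ldots''), so your sketch there matches the paper's level of rigor; your treatment of the $\theta$-bookkeeping, including the choice $y=\theta_{\RR'}(z)-1$ to guarantee $y\,\RR'\,z$, is in fact more careful than the paper's somewhat loose phrasing of the equivalences.
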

	
	Dualizing \autoref{prop1}, we obtain the following result.
	
	\begin{prop}\label{prop2}
		Let $\RR\preccurlyeq \RR'$ be a Kreweras pair. Then $\theta_{\RR}(x)<\theta_{\RR'}(x)$ implies $\theta_{\RR}(x)=0$ if and only if $x \, \LL'\to\RR \,y$, $x\,\LL'\to\RR \,z$, and $z\leqslant y$ implies $z\,\LL'\to\RR \,y$.
	\end{prop}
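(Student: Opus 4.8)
The plan is to deduce the statement from \autoref{prop1} using the order-reversing self-duality of $[n]$, so that \autoref{prop2} becomes literally \autoref{prop1} read in the opposite category. Let $\rho\colon[n]\to[n]$ be the antitone bijection $\rho(i)=n-i$, which exhibits an isomorphism $[n]\cong[n]^{\op}$. Given a transfer system $\RR$ on $[n]$ with left class $\LL$, transporting the opposite weak factorization system $(\RR^{\op},\LL^{\op})$ back along $\rho$ yields a weak factorization system on $[n]$; unwinding, this is the pair $(\LL^{\rho},\RR^{\rho})$ defined by
\[
a\,\RR^{\rho}\,b \iff \rho(b)\,\LL\,\rho(a), \qquad a\,\LL^{\rho}\,b \iff \rho(b)\,\RR\,\rho(a).
\]
Since weak factorization systems are preserved by passing to the opposite category and by isomorphisms of categories, $(\LL^{\rho},\RR^{\rho})$ is again a weak factorization system; in particular $\RR^{\rho}$ is a transfer system whose left class is exactly $\LL^{\rho}$ (this is where one invokes \autoref{fooqwresult}), and $\RR\mapsto\RR^{\rho}$ is an involution. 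If one prefers, one can instead verify the transfer-system closure for $\RR^{\rho}$ directly from the pushout-closure of $\LL$.

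Next I would record the dictionary this duality produces. A short computation from the definitions and \autoref{lemma1} shows that $\rho$ interchanges the two structure functions:
\[
\pi_{\RR^{\rho}}(i) = \rho\bigl(\theta_{\RR}(\rho(i))\bigr), \qquad \theta_{\RR^{\rho}}(i) = \rho\bigl(\pi_{\RR}(\rho(i))\bigr).
\]
If $\RR\leqslant\RR'$ then $\LL\supseteq\LL'$, whence $(\RR')^{\rho}\subseteq\RR^{\rho}$: the duality reverses the order of a premodel pair, carrying $(\RR\leqslant\RR')$ to $((\RR')^{\rho}\leqslant\RR^{\rho})$. Moreover, expanding the composites gives
\[
a\,\bigl((\RR')^{\rho}\circ\LL^{\rho}\bigr)\,b \iff \rho(b)\,\bigl(\RR\circ\LL'\bigr)\,\rho(a),
\]
that is, in the pair $((\RR')^{\rho},\RR^{\rho})$ one has $x\,\LL^{\rho}\to(\RR')^{\rho}\,y$ if and only if $\rho(y)\,\LL'\to\RR\,\rho(x)$ in the pair $(\RR,\RR')$. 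Since $\rho$ reverses composition, it follows that the weak equivalences of $((\RR')^{\rho},\RR^{\rho})$ are closed under composition exactly when those of $(\RR,\RR')$ are; equivalently, $\RR\preccurlyeq\RR'$ if and only if $(\RR')^{\rho}\preccurlyeq\RR^{\rho}$, so $((\RR')^{\rho}\preccurlyeq\RR^{\rho})$ is a Kreweras pair whenever $(\RR\preccurlyeq\RR')$ is.

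With the dictionary in place the proof is formal: apply \autoref{prop1} to the Kreweras pair $((\RR')^{\rho}\preccurlyeq\RR^{\rho})$ and translate each side of the resulting equivalence through $\rho$. Using $\pi_{\RR^{\rho}}(x)=\rho(\theta_{\RR}(\rho(x)))$, $\pi_{(\RR')^{\rho}}(x)=\rho(\theta_{\RR'}(\rho(x)))$, $\rho(n)=0$, and the antitonicity of $\rho$, the hypothesis ``$\pi_{(\RR')^{\rho}}(x)<\pi_{\RR^{\rho}}(x)$ implies $\pi_{\RR^{\rho}}(x)=n$'' becomes, after renaming $\rho(x)$ as $x$, precisely ``$\theta_{\RR}(x)<\theta_{\RR'}(x)$ implies $\theta_{\RR}(x)=0$''. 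On the other side, the cancellation clause ``$x\,\LL^{\rho}\to(\RR')^{\rho}\,y$, $z\,\LL^{\rho}\to(\RR')^{\rho}\,y$, $x\leqslant z$ implies $x\,\LL^{\rho}\to(\RR')^{\rho}\,z$'' translates, via the displayed equivalence and after relabelling $\rho(x),\rho(y),\rho(z)$, into ``$x\,\LL'\to\RR\,y$, $x\,\LL'\to\RR\,z$, $z\leqslant y$ implies $z\,\LL'\to\RR\,y$''. These are exactly the two sides of \autoref{prop2}, so the equivalence transfers.

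The only delicate point is bookkeeping. One must be sure that $(\LL^{\rho},\RR^{\rho})$ really is the transported opposite weak factorization system, so that the left class of $\RR^{\rho}$ is all of $\LL^{\rho}$ rather than merely contained in it; and one must keep straight the two simultaneous reversals that $\rho$ performs at once --- the swap $\RR\leftrightarrow\RR'$ of the premodel pair, and the reversal of composable chains that turns the right-cancellation clause of \autoref{prop1} into the left-cancellation clause needed here. If one wishes to avoid the abstract duality entirely, an equivalent route is to rerun the proof of \autoref{prop1} verbatim with $\pi$ replaced by $\theta$, $n$ replaced by $0$, and all inequalities reversed, invoking \autoref{lemma1} wherever the original argument uses the corresponding properties of $\pi$; \autoref{lemma1} has been stated precisely so that this substitution is legitimate.
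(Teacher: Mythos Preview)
Your argument is correct and is exactly the approach the paper intends: the paper's own ``proof'' of \autoref{prop2} is the single phrase ``Dualizing \autoref{prop1}'', and you have carefully unpacked what that duality means via the antitone involution $\rho(i)=n-i$, checked the dictionary $\pi_{\RR^{\rho}}=\rho\circ\theta_{\RR}\circ\rho$, verified that $\RR\preccurlyeq\RR'$ transports to $(\RR')^{\rho}\preccurlyeq\RR^{\rho}$, and matched both sides of the equivalence under the relabelling. The bookkeeping you flag (that $\rho$ swaps the roles of $\RR$ and $\RR'$ in the pair while also reversing the order, turning the right-cancellation clause of \autoref{prop1} into the left-cancellation clause here) is exactly the point, and you have handled it cleanly.
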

	
	Combining the above results leads immediately to a proof of \autoref{mainthm} as required.\hfill\qedsymbol

\begin{example}
The following diagrams we undergo the procedure of moving between a colored tree (\autoref{fig:ncexample}) to its Kreweras intervals of non-crossing partitions (\autoref{fig:ncpartitions}), to the transfer systems (\autoref{fig:transon6}) and finally to the model structure (\autoref{fig:modelon6}) on the poset $[6]$:

\begin{figure}[h]
		\centering
		\begin{tikzpicture}[yscale = 0.7]
		\draw[step=1.0,black!20,thin] (0.0,0.0) grid (6,3);
		\foreach \xtick in {0,...,6}  \node at (\xtick,-0.3) {\xtick};
		\node[circle, draw=black, fill=black, inner sep=0pt,minimum size=5pt] (0) at  (0, 1) {};
		\node[circle, draw=black, fill=black, inner sep=0pt,minimum size=5pt] (1) at  (1, 0) {};
		\node[circle, draw=black, fill=black, inner sep=0pt,minimum size=5pt] (2) at  (2, 2) {};
		\node[circle, draw=black, fill=black, inner sep=0pt,minimum size=5pt] (3) at  (3, 1) {};
		\node[circle, draw=black, fill=black, inner sep=0pt,minimum size=5pt] (4) at  (4, 1) {};
		\node[circle, draw=black, fill=black, inner sep=0pt,minimum size=5pt] (5) at  (5, 3) {};
		\node[circle, draw=black, fill=black, inner sep=0pt,minimum size=5pt] (6) at  (6, 2) {};
		\draw[thick,gBlue] (0) -- (1);
		\draw[thick,Green] (1) -- (3);
		\draw[thick,gRed] (1) -- (4);
		\draw[thick,gBlue] (3) -- (2);
		\draw[thick,gRed] (4) -- (6);
		\draw[thick,gBlue] (6) -- (5);
		\end{tikzpicture}
		\caption{An admissibly ordered colored tree.}\label{fig:ncexample}
	\end{figure}
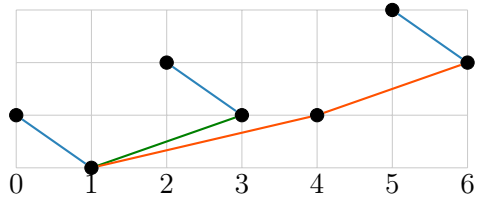

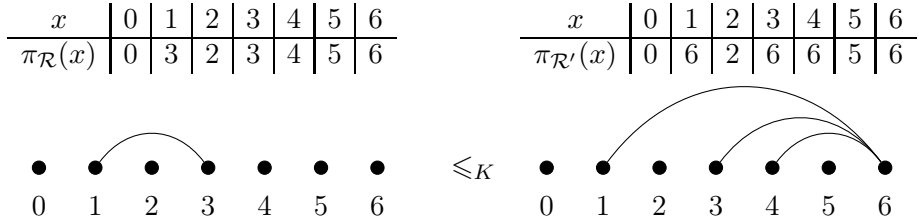
\begin{figure}[h]
\centering
\begin{tabular}{c|c|c|c|c|c|c|c}
$x$             & 0 & 1 & 2 & 3 & 4 & 5 & 6 \\ \hline
$\pi_{\RR}(x)$ & 0 & 3 & 2 & 3 & 4 & 5 & 6
\end{tabular}
\qquad \qquad
\begin{tabular}{c|c|c|c|c|c|c|c}
$x$             & 0 & 1 & 2 & 3 & 4 & 5 & 6 \\ \hline
$\pi_{\RR'}(x)$ & 0 & 6 & 2 & 6 & 6 & 5 & 6 
\end{tabular}

\vspace{-2mm}

\begin{tikzpicture}[xscale = 0.75]
\node[circle, draw=black, fill=black, inner sep=0pt,minimum size=5pt] (0) at  (0, 0) {};
\node[circle, draw=black, fill=black, inner sep=0pt,minimum size=5pt] (1) at  (1, 0) {};
\node[circle, draw=black, fill=black, inner sep=0pt,minimum size=5pt] (2) at  (2, 0) {};
\node[circle, draw=black, fill=black, inner sep=0pt,minimum size=5pt] (3) at  (3, 0) {};
\node[circle, draw=black, fill=black, inner sep=0pt,minimum size=5pt] (4) at  (4, 0) {};
\node[circle, draw=black, fill=black, inner sep=0pt,minimum size=5pt] (5) at  (5, 0) {};
\node[circle, draw=black, fill=black, inner sep=0pt,minimum size=5pt] (6) at  (6, 0) {};
\node at  (0, -0.5) {0};
\node at  (1, -0.5) {1};
\node at  (2, -0.5) {2};
\node at  (3, -0.5) {3};
\node at  (4, -0.5) {4};
\node at  (5, -0.5) {5};
\node at  (6, -0.5) {6};
\draw (1) to [bend left=45] (6);
\draw (3) to [bend left=45] (6);
\draw (4) to [bend left=45] (6);
\node at  (-1.3, 0) {$\leqslant_K$};
\begin{scope}[xshift = -9cm]
\node[circle, draw=black, fill=black, inner sep=0pt,minimum size=5pt] (0) at  (0, 0) {};
\node[circle, draw=black, fill=black, inner sep=0pt,minimum size=5pt] (1) at  (1, 0) {};
\node[circle, draw=black, fill=black, inner sep=0pt,minimum size=5pt] (2) at  (2, 0) {};
\node[circle, draw=black, fill=black, inner sep=0pt,minimum size=5pt] (3) at  (3, 0) {};
\node[circle, draw=black, fill=black, inner sep=0pt,minimum size=5pt] (4) at  (4, 0) {};
\node[circle, draw=black, fill=black, inner sep=0pt,minimum size=5pt] (5) at  (5, 0) {};
\node[circle, draw=black, fill=black, inner sep=0pt,minimum size=5pt] (6) at  (6, 0) {};
\node at  (0, -0.5) {0};
\node at  (1, -0.5) {1};
\node at  (2, -0.5) {2};
\node at  (3, -0.5) {3};
\node at  (4, -0.5) {4};
\node at  (5, -0.5) {5};
\node at  (6, -0.5) {6};
\draw (1) to [bend left=45] (3);
\end{scope}
\end{tikzpicture}
\caption{noncrossing partitions on $[6]$ associated with \autoref{fig:ncexample}}\label{fig:ncpartitions}
\end{figure}

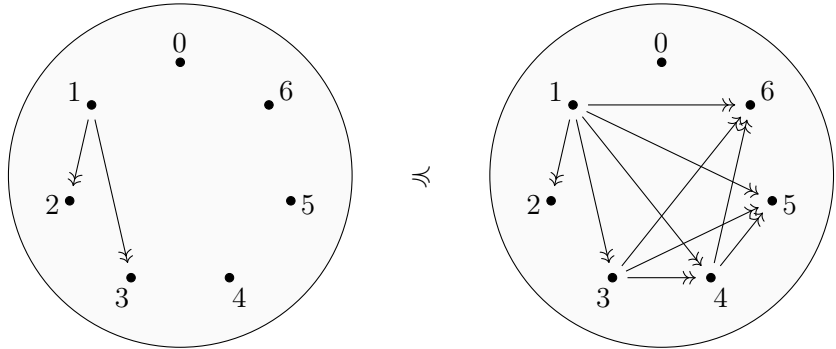
\begin{figure}[H]
\centering
\begin{tikzpicture}[scale = 0.8]
\node[circle, draw=black, fill=black!2, inner sep=0pt,minimum size=130pt] (0) at  (0, 0) {};
\node (pol) [draw=none, minimum size=3cm, regular polygon, regular polygon sides=7] at (0,0) {};
\foreach \n [count=\nu from 0, remember=\n as \lastn, evaluate={\nu+\lastn}] in {1,2,...,7}
\node[anchor=\n*(360/7)-140] at (pol.corner \n){\nu};
\foreach \n in {1,2,...,7}
\draw[fill = black] (pol.corner \n) circle (2pt);
\draw[->>, shorten >=2mm, shorten <=2mm](pol.corner 2) -- (pol.corner 3);
\draw[->>, shorten >=2mm, shorten <=2mm](pol.corner 2) -- (pol.corner 4);
\begin{scope}[xshift = 8cm]
\node[circle, draw=black, fill=black!2, inner sep=0pt,minimum size=130pt] (0) at  (0, 0) {};
\node (pol) [draw=none, minimum size=3cm, regular polygon, regular polygon sides=7] at (0,0) {};
\foreach \n [count=\nu from 0, remember=\n as \lastn, evaluate={\nu+\lastn}] in {1,2,...,7}
\node[anchor=\n*(360/7)-140] at (pol.corner \n){\nu};
\foreach \n in {1,2,...,7}
\draw[fill = black] (pol.corner \n) circle (2pt);
\draw[->>, shorten >=2mm, shorten <=2mm](pol.corner 2) -- (pol.corner 3);
\draw[->>, shorten >=2mm, shorten <=2mm](pol.corner 2) -- (pol.corner 4);
\draw[->>, shorten >=2mm, shorten <=2mm](pol.corner 2) -- (pol.corner 5);
\draw[->>, shorten >=2mm, shorten <=2mm](pol.corner 2) -- (pol.corner 6);
\draw[->>, shorten >=2mm, shorten <=2mm](pol.corner 2) -- (pol.corner 7);
\draw[->>, shorten >=2mm, shorten <=2mm](pol.corner 4) -- (pol.corner 7);
\draw[->>, shorten >=2mm, shorten <=2mm](pol.corner 4) -- (pol.corner 6);
\draw[->>, shorten >=2mm, shorten <=2mm](pol.corner 4) -- (pol.corner 5);
\draw[->>, shorten >=2mm, shorten <=2mm](pol.corner 5) -- (pol.corner 6);
\draw[->>, shorten >=2mm, shorten <=2mm](pol.corner 5) -- (pol.corner 7);
\end{scope}
\node at  (4, 0) {$\preccurlyeq$};
\end{tikzpicture}
\caption{The transfer systems associated to \autoref{fig:ncexample}.}\label{fig:transon6}
\end{figure}

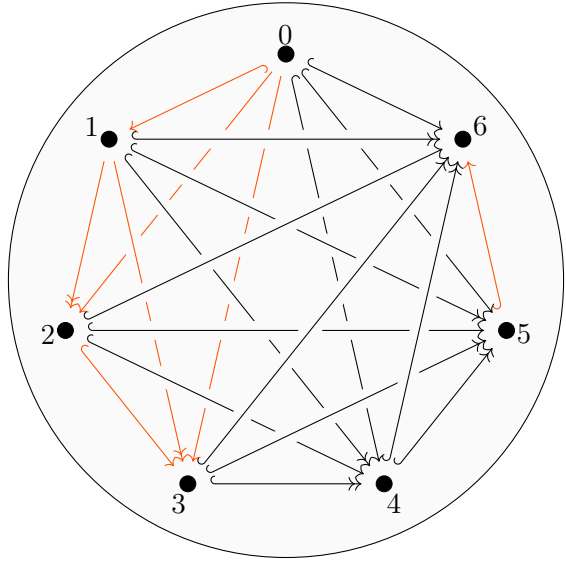
\begin{figure}[H]
\centering
\begin{tikzpicture}
\node[circle, draw=black, fill=black!2, inner sep=0pt,minimum size=210pt] (0) at  (0, 0) {};
\node (pol) [draw=none, minimum size=6cm, regular polygon, regular polygon sides=7] at (0,0) {};
\foreach \n [count=\nu from 0, remember=\n as \lastn, evaluate={\nu+\lastn}] in {1,2,...,7}
\node[anchor=\n*(360/7)-140] at (pol.corner \n){\nu};
\foreach \n in {1,2,...,7}
\draw[fill = black] (pol.corner \n) circle (3pt);
\draw[black!2, line width=2.5mm,shorten >=5mm, shorten <=5mm] (pol.corner 1) -- (pol.corner 2);
\draw[right hook->,gRed, shorten >=3mm, shorten <=3mm](pol.corner 1) -- (pol.corner 2);
\draw[black!2, line width=2.5mm,shorten >=5mm, shorten <=5mm] (pol.corner 1) -- (pol.corner 3);
\draw[->,gRed, shorten >=3mm, shorten <=3mm](pol.corner 1) -- (pol.corner 3);
\draw[black!2, line width=2.5mm,shorten >=5mm, shorten <=5mm] (pol.corner 2) -- (pol.corner 3);
\draw[->>,gRed, shorten >=3mm, shorten <=3mm](pol.corner 2) -- (pol.corner 3);
\draw[black!2, line width=2.5mm,shorten >=5mm, shorten <=5mm] (pol.corner 1) -- (pol.corner 4);
\draw[->,gRed, shorten >=3mm, shorten <=3mm](pol.corner 1) -- (pol.corner 4);
\draw[black!2, line width=2.5mm,shorten >=5mm, shorten <=5mm] (pol.corner 2) -- (pol.corner 4);
\draw[->>,gRed, shorten >=3mm, shorten <=3mm](pol.corner 2) -- (pol.corner 4);
\draw[black!2, line width=2.5mm,shorten >=5mm, shorten <=5mm] (pol.corner 3) -- (pol.corner 4);
\draw[right hook->,gRed, shorten >=3mm, shorten <=3mm](pol.corner 3) -- (pol.corner 4);
\draw[black!2, line width=2.5mm,shorten >=5mm, shorten <=5mm] (pol.corner 1) -- (pol.corner 5);
\draw[right hook->, shorten >=3mm, shorten <=3mm](pol.corner 1) -- (pol.corner 5);
\draw[black!2, line width=2.5mm,shorten >=5mm, shorten <=5mm] (pol.corner 2) -- (pol.corner 5);
\draw[right hook->>, shorten >=3mm, shorten <=3mm](pol.corner 2) -- (pol.corner 5);
\draw[black!2, line width=2.5mm,shorten >=5mm, shorten <=5mm] (pol.corner 3) -- (pol.corner 5);
\draw[right hook->, shorten >=3mm, shorten <=3mm](pol.corner 3) -- (pol.corner 5);
\draw[black!2, line width=2.5mm,shorten >=5mm, shorten <=5mm] (pol.corner 4) -- (pol.corner 5);
\draw[right hook->>, shorten >=3mm, shorten <=3mm](pol.corner 4) -- (pol.corner 5);
\draw[black!2, line width=2.5mm,shorten >=5mm, shorten <=5mm] (pol.corner 1) -- (pol.corner 6);
\draw[right hook->, shorten >=3mm, shorten <=3mm](pol.corner 1) -- (pol.corner 6);
\draw[black!2, line width=2.5mm,shorten >=5mm, shorten <=5mm] (pol.corner 2) -- (pol.corner 6);
\draw[right hook->>, shorten >=3mm, shorten <=3mm](pol.corner 2) -- (pol.corner 6);
\draw[black!2, line width=2.5mm,shorten >=5mm, shorten <=5mm] (pol.corner 3) -- (pol.corner 6);
\draw[right hook->, shorten >=3mm, shorten <=3mm](pol.corner 3) -- (pol.corner 6);
\draw[black!2, line width=2.5mm,shorten >=5mm, shorten <=5mm] (pol.corner 4) -- (pol.corner 6);
\draw[right hook->>, shorten >=3mm, shorten <=3mm](pol.corner 4) -- (pol.corner 6);
\draw[black!2, line width=2.5mm,shorten >=5mm, shorten <=5mm] (pol.corner 5) -- (pol.corner 6);
\draw[right hook->>, shorten >=3mm, shorten <=3mm](pol.corner 5) -- (pol.corner 6);
\draw[black!2, line width=2.5mm,shorten >=5mm, shorten <=5mm] (pol.corner 1) -- (pol.corner 7);
\draw[right hook->, shorten >=3mm, shorten <=3mm](pol.corner 1) -- (pol.corner 7);
\draw[black!2, line width=2.5mm,shorten >=5mm, shorten <=5mm] (pol.corner 2) -- (pol.corner 7);
\draw[right hook->>, shorten >=3mm, shorten <=3mm](pol.corner 2) -- (pol.corner 7);
\draw[black!2, line width=2.5mm,shorten >=5mm, shorten <=5mm] (pol.corner 3) -- (pol.corner 7);
\draw[right hook->, shorten >=3mm, shorten <=3mm](pol.corner 3) -- (pol.corner 7);
\draw[black!2, line width=2.5mm,shorten >=5mm, shorten <=5mm] (pol.corner 4) -- (pol.corner 7);
\draw[right hook->>, shorten >=3mm, shorten <=3mm](pol.corner 4) -- (pol.corner 7);
\draw[black!2, line width=2.5mm,shorten >=5mm, shorten <=5mm] (pol.corner 5) -- (pol.corner 7);
\draw[right hook->>, shorten >=3mm, shorten <=3mm](pol.corner 5) -- (pol.corner 7);
\draw[black!2, line width=2.5mm,shorten >=5mm, shorten <=5mm] (pol.corner 6) -- (pol.corner 7);
\draw[right hook->,gRed, shorten >=3mm, shorten <=3mm](pol.corner 6) -- (pol.corner 7);
\end{tikzpicture}
\caption{The model structure on $[6]$ encoded by \autoref{fig:ncexample}.}\label{fig:modelon6}
\end{figure}

\end{example}

	\subsection{Involutions}\label{sec2}
	
	The characterization of \autoref{mainthm} is --- by construction --- invariant under swapping blue and green,\footnote{On triangulations, this corresponds to reflection across the vertical axis of symmetry.} and hence this operation induces an involution on the collection of model structures. Presently, we characterize this involution explicitly. We begin with an intermediate lemma, referring the reader to~\cite[Definition 3.1]{boor} for the definition of interval partitions.
		
	\begin{lemma}\label{weak}
		Let $(\RR,\RR')$ be a model structure. Then $x$ and $y$ are weakly equivalent if and only if they lie in the same blue-green component of the colored tree corresponding to $(\RR,\RR')$.
	\end{lemma}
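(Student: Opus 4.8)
The plan is to read everything off the admissibly ordered tricolored tree $T$ of $(\RR,\RR')$, using \aref{mainthm} to constrain the shape of $T$ and the explicit descriptions of $\pi_{\RR}$, $\theta_{\RR'}$ (together with \aref{lemma1}) to identify $\mathcal W=\RR\circ\LL'$. First I would record the shape of $T$: since $(\RR,\RR')$ is a model structure, no red branch is descended from a non-red branch, so the red edges of $T$ form a single path $r_0\to r_1\to\cdots\to r_k$ out of the root $r_0$, and every blue or green edge lies inside one of the (consequently red-free) subtrees hanging off the $r_i$. Unwinding the admissibility conditions, the blue-green component of $r_i$ --- that is, $r_i$ together with its blue subtree (lying to its left) and its green subtree (lying to its right) --- is a contiguous block, so the blue-green components are precisely a sequence of consecutive intervals $I_0<I_1<\cdots<I_k$ partitioning $[n]$, with $\min I_j=\max I_{j-1}+1$.

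Next I would reduce the statement to a claim about single morphisms. Because $\LL'\subseteq\mathcal W$ and $\RR\subseteq\mathcal W$ (every identity lies in both classes) and $\mathcal W$ has the two-out-of-three property, a short zig-zag shortening argument, valid in any total order, shows that $x$ and $y$ are weakly equivalent if and only if $\min(x,y)\to\max(x,y)\in\mathcal W$. So it suffices to prove: for $x\leqslant y$ in $[n]$, the map $x\to y$ lies in $\mathcal W$ if and only if $x$ and $y$ lie in a common block $I_j$.

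For the forward implication, suppose $x\,\LL'\,z\,\RR\,y$ and let $I_j$ be the block containing $z$. The two inequalities to establish are $\theta_{\RR'}(z)\geqslant\min I_j$ and $\pi_{\RR}(z)\leqslant\max I_j$. The first (for $j\geqslant1$; the case $j=0$ is trivial) holds because $\max I_{j-1}$ is reached from $r_{j-1}$ by an all-green path, so its red-green component also contains $r_j$ (across the red edge $r_{j-1}\to r_j$) and all green descendants of $r_j$; hence $\max I_{j-1}\,\RR'\,z$, and therefore $\theta_{\RR'}(z)\geqslant\max I_{j-1}+1=\min I_j$. The second holds because, by \aref{mainthm} again, no node of $I_j$ other than $r_j$ has a red child, so the subtree controlling $\pi_{\RR}(z)$ stays inside $I_j$. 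Since $\theta_{\RR'}(z)\leqslant x\leqslant z\leqslant y\leqslant\pi_{\RR}(z)$, we conclude $\min I_j\leqslant x\leqslant y\leqslant\max I_j$, that is, $x,y\in I_j$.

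For the reverse implication, let $r_j=v_0\to v_1\to\cdots\to v_\ell$ be the all-green rightmost path out of $r_j$, so that $v_\ell=\max I_j$ and $\pi_{\RR}(v_i)=\max I_j$ for every $i$. Every $x\in I_j$ lies in $\{v_i\}\cup(\text{blue subtree of }v_i)$ for a unique $i$ (read off from the first non-green edge on the path from $r_j$ to $x$), and there the admissibility conditions give $\theta_{\RR'}(v_i)=\min\bigl(\{v_i\}\cup(\text{blue subtree of }v_i)\bigr)\leqslant x$, so $x\,\LL'\,v_i$; combined with $v_i\,\RR\,\max I_j$ this yields $x\to\max I_j\in\mathcal W$. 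Applying two-out-of-three to $x\to y\to\max I_j$ for $x\leqslant y$ in $I_j$ then gives $x\to y\in\mathcal W$, completing the proof. I expect the main work to be concentrated in establishing the shape of $T$ and in the two $\theta_{\RR'}$/$\pi_{\RR}$ computations from the admissible ordering (especially $\theta_{\RR'}(r_j)=\min I_j$); everything else is formal.
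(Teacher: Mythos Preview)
Your proposal is correct and follows essentially the same route as the paper: both begin by invoking \autoref{mainthm} to see that the red edges form a single path $r_0\to\cdots\to r_k$ out of the root, deduce that the blue--green components are consecutive intervals $I_0<\cdots<I_k$, and then check the two implications separately. The differences are cosmetic. For ``same block $\Rightarrow$ weakly equivalent'' the paper factors $\min I_j\,\LL'\,r_j\,\RR\,\max I_j$ directly (the path from $r_j$ to $\min I_j$ is all blue, to $\max I_j$ all green) and then cites the decomposition property from \cite{boor}; you instead walk along the green spine $v_0,\ldots,v_\ell$ and use two-out-of-three, which is a little more self-contained. For ``weakly equivalent $\Rightarrow$ same block'' the paper argues by contradiction that a putative factorization $x\,\LL'\,z\,\RR\,y$ with $x\in I_i$, $y\in I_j$, $i<j$ would force $z$ into both $I_i$ and $I_j$; your bounds $\theta_{\RR'}(z)\geqslant\min I_j$ and $\pi_{\RR}(z)\leqslant\max I_j$ are exactly the same two constraints, just expressed through the functions of \autoref{lemma1} rather than by a direct tree reading. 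Either packaging works.
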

	
	\begin{proof}
		By \autoref{mainthm}, we know the colored tree corresponding to $(\RR,\RR')$ has a unique maximal red path $y_0\to y_1\to\cdots\to y_n$ where $y_0$ is the root, and such that no other branches are red. Given the method for which we order the tree nodes, one observes that the blue-green components partition $[n]$ into an interval partition $[n] = P_0\amalg P_1\amalg\cdots\amalg P_n$ where $y_i\in P_i$ and $x<y$ for all $x\in P_i$, $y\in P_j$, $i<j$. The claim is then that the $P_i$ are the weak equivalence classes of $(\RR,\RR')$.
		
		Let $x$ be the minimal element of $P_i$ and $y$ the maximal element. Then by the ordering that we use, it follows that the path from $y_i$ to $x$ is entirely blue and the path from $y_i$ to $y$ is entirely green. Thus $x \, \LL' \, y_i \, \RR \, y$. By the decomposition property of weak equivalences for model structures on a poset (c.f., \cite{boor}), this implies that all elements of $P_i$ are weakly equivalent.
		
		Now suppose $x\in P_i$ and $y\in P_j$ for some $i<j$. Then the path from $x$ to $y$ passes through the red path $y_i\to y_j$. Suppose $x \, \LL' \, z \, \RR \, y$. Then $y$ must be a descendant of $z$ along a green branch. But the first ancestors of $y$ are $y_0\to y_1\to\cdots \to y_j$, and since none of these branches are green we must have $z$ descended from $y_j$, which in particular forces $z\in P_j$. On the other hand, if $\tilde{x}$ is maximal in $P_i$ and $\tilde{y}$ is maximal in $P_j$ then one observes that $\tilde{x}\, \RR' \, \tilde{y}$, so if $x \, \LL' \,z$ then $z\leqslant \tilde{x}\in P_i$, a contradiction.
	\end{proof}
	
	\begin{remark}
		From the above lemma we can retrieve the enumeration result of \cite[Theorem 4.10]{boor} regarding model structures on $[n]$. Indeed, observe that a colored tree corresponding to a model structure is necessarily made up from a collection of blue-green trees growing up from a red field. Since blue-green trees correspond to binary trees, it follows that such trees correspond to ordered collections of binary trees whose node counts sum to $n$. Since binary trees are counted by Catalan numbers, this exactly recovers the enumeration of model structures by partitioning $[n-1]$ into weak equivalence classes and counting transfer systems on each part as in~\cite{boor}.
	\end{remark}
	
	We are now ready to give an explicit description of the blue--green involution on colored trees with respect to model structures.
	
		\begin{prop}\label{color-swap}
		Let $(\RR,\RR')$ be a model structure on $[n]$, and let $P$ be the interval partition generated by the weak equivalence classes. Then $(\RR,\RR')$ restricts to a model structure $(\RR_S,\RR'_S)$ on each $S\in P$. The model structure constructed by swapping blue and green in the colored tree corresponding to $(\RR,\RR')$ is the unique model structure $(\tilde{\RR},\tilde{\RR}')$ with the same weak equivalences as $(\RR,\RR')$ but for which $\tilde{\RR}'_S = \LL_S^{\mathsf{op}}$ and $\tilde{\RR}_S = (\LL'_S)^{\mathsf{op}}$ for all $S\in P$.
	\end{prop}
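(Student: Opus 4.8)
The plan is to reduce to the case of a single weak equivalence class and there identify the blue--green swap with the passage to the opposite model structure.

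First I would observe that the blue--green swap only recolours edges within the palette $\{\text{blue},\text{green}\}$: it therefore fixes the unique maximal red path of the colored tree of $(\RR,\RR')$, fixes the blue--green components, and preserves the property (\autoref{mainthm}) of having no red branch descended from a non-red branch. Hence $(\tilde\RR,\tilde\RR')$ is again a model structure, and by \autoref{weak} its weak equivalence classes coincide with those of $(\RR,\RR')$; this gives the claim about weak equivalences. For the remaining assertions I would show that the construction localises over $P$: since the only red edges of a model-structure tree lie on the maximal red path (\autoref{weak} and its proof), the subtree spanned by a block $S=P_i$ is a blue--green tree which moreover contains every descendant of each of its nodes, so intersecting the relations of $\RR$ and $\RR'$ with $S\times S$ yields a model structure $(\RR_S,\RR'_S)$ whose colored tree is exactly that subtree; the same applies to $(\tilde\RR,\tilde\RR')$, so $(\tilde\RR_S,\tilde\RR'_S)$ is the blue--green swap of $(\RR_S,\RR'_S)$. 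Since a model structure on $[n]$ is determined by its interval partition of weak equivalence classes together with the restricted model structures on the parts (c.f.\ \cite{boor}), it then suffices to treat the case where $[n]$ is itself a single weak equivalence class --- equivalently, where the colored tree $T$ has no red edges --- and to show that $(\tilde\RR,\tilde\RR')$ is then the opposite model structure of $(\RR,\RR')$, transported to $[n]$ along the order reversal $\sigma\colon i\mapsto n-i$. By the definition of the opposite model structure, this last statement is exactly the asserted identities $\tilde\RR'=\LL^{\mathsf{op}}$ and $\tilde\RR=(\LL')^{\mathsf{op}}$.

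So assume $T$ has no red edges. Then the formulas for $\pi_{\RR}$ and $\pi_{\RR'}$ visibly agree, so $\RR=\RR'$ and dually $\LL=\LL'$, and only the identity $\tilde\RR=\LL^{\mathsf{op}}$ remains. For a blue--green tree one reads off from the defining formulas that $\pi_{\RR}(x)$ is the rightmost descendant of $x$ in $T$, and, using \autoref{lemma1}(3), that $\theta_{\RR}(y)$ is the leftmost descendant of $y$ in $T$; equivalently, $x\,\RR\,y$ iff $x=y$ or $y$ is a descendant of $x$ whose connecting path leaves $x$ along a green edge, whereas $x\,\LL\,y$ iff $x=y$ or $x$ is a descendant of $y$ whose connecting path leaves $y$ along a blue edge. (This symmetry between $\RR$ and $\LL$ under exchanging blue$\leftrightarrow$green together with ancestor$\leftrightarrow$descendant is the conceptual reason the swap produces the opposite.) I would then check that swapping blue and green interchanges the admissibility conditions for blue and green branches, the red condition being vacuous, so that the order reversal $\sigma$ applied to the admissible ordering of $T$ produces the admissible ordering of $\tilde T$.

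Finally I would combine these observations by a position count. Since $\tilde T$ has the same underlying tree as $T$ but carries the reversed ordering, the rightmost descendant of a node in $\tilde T$ is the $\sigma$-image of its leftmost descendant in $T$, whence $\pi_{\tilde\RR}=\sigma\circ\theta_{\RR}\circ\sigma$; combined with the characterisation $x\,\LL\,y\iff\theta_{\RR}(y)\leqslant x\leqslant y$ of \autoref{lemma1}(2), this says precisely that $u\,\tilde\RR\,v$ iff $\sigma(v)\,\LL\,\sigma(u)$, i.e.\ $\tilde\RR=\LL^{\mathsf{op}}$ under the standard identification of $[n]^{\mathsf{op}}$ with $[n]$ via $\sigma$. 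Reassembling the blocks and invoking the uniqueness noted above completes the proof. The main obstacle is the bookkeeping in these last two paragraphs: one must verify carefully that the order reversal really is the admissible ordering of the swapped tree and establish the tree-theoretic descriptions of $\RR$ and $\LL$ above, all while keeping the left/right orientations and the opposite-category identification consistent; granting those, what remains is a short computation.
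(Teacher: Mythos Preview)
Your proposal is correct and follows essentially the same approach as the paper: both arguments hinge on the observation that the blue--green swap reverses the admissible ordering within each weak equivalence class, and both identify $\tilde{\RR}$ with $\LL^{\mathsf{op}}$ via the dichotomy ``descendant along a green-first path'' versus ``descendant along a blue-first path.'' The only differences are presentational --- you reduce explicitly to a single block and phrase the key step as the functional identity $\pi_{\tilde{\RR}}=\sigma\circ\theta_{\RR}\circ\sigma$, whereas the paper works directly in each $P_i$ and verifies $x\,\tilde{\RR}\,y\iff y\,\LL'\,x$ by a short case analysis on the tree.
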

		
	\begin{proof}
		By \autoref{weak} and the way we sort things, clearly the operation of swapping blue and green preserves each weak equivalence class and reverses the order of all the nodes in each class. In what follows whenever we use $<$ or $\leqslant$ without clarification we refer to the order relevant for $\tilde{\RR}$ and $\tilde{\RR}'$, i.e., the order post-reversal. Thus when we refer to the ``opposite order'' we really mean the original order.
		
		Suppose $x<y$ lie in some blue-green component $P_i$. Then $x \, \tilde{\RR} \,y$ if and only if $y$ is descended from $x$ along a path that begins with a blue branch (in the original colored tree corresponding to $(\RR,\RR')$). But this means in the opposite order that for all $y\leqslant z<x$, $z$ is also descended from $x$ along a blue branch and hence $z \, \cancel\RR' \,x$ for all such $z$, which implies $y \, \LL' \, x$. Conversely, if $y$ is not descended from $x$ along a path that begins with a blue branch then since $x<y$ there must be some $z$ such that $y$ is descended from $z$ along a blue branch and $x$ is descended from $z$ along a green branch; but then it's easy to see in the opposite order that $y<z<x$ and $z \, \RR' \, x$, and hence $y \, \cancel\LL' \, x$. Thus $x \, \tilde{\RR} \, y$ if and only if $y \, \LL' \, x$ as claimed. Since $P_i$ is a weak equivalence class, we have $\tilde{\RR} = \tilde{\RR}'$ on $P_i$, so by~\cite[Theorem 3.10]{boor} we're done.
	\end{proof}

\bibliography{quillen}\bibliographystyle{alpha}

\end{document}